\documentclass[10pt,a4paper]{amsart}
\usepackage{amsmath}
\usepackage{amsthm}
\usepackage{amssymb}
\usepackage{mathrsfs} 
\usepackage{exscale}
\usepackage{verbatim}
\usepackage{graphicx}
\usepackage{textcomp}
\usepackage{array}
\usepackage[a4paper,breaklinks]{hyperref}

\parskip 1 ex
\setlength{\parindent}{0cm}
\setcounter{tocdepth}{1}

\allowdisplaybreaks
\theoremstyle{definition}
    \newtheorem{definition}{Definition}
    \newtheorem*{definition*}{Definition}
    \newtheorem{example}[definition]{Example}
    \numberwithin{definition}{subsection}
\theoremstyle{plain}
    \newtheorem{lemma}[definition]{Lemma}
    \newtheorem{proposition}[definition]{Proposition}
    \newtheorem{theorem}[definition]{Theorem}
    \newtheorem*{theorem*}{Theorem}
    \newtheorem{corollary}[definition]{Corollary}

    \newtheorem*{claim*}{Claim}

\theoremstyle{remark}

    \newtheorem{remark}[definition]{Remark}

\setcounter{figure}{0}



\DeclareMathOperator{\Hom}{Hom}

\DeclareMathOperator{\tr}{tr}
\DeclareMathOperator{\cop}{c}
\DeclareMathOperator{\rk}{rk}

\DeclareMathOperator{\im}{im}
\DeclareMathOperator{\Rot}{Rot}

\DeclareMathOperator{\stab}{stab}
\DeclareMathOperator{\Rep}{Rep}
\DeclareMathOperator{\sgn}{sgn}
\DeclareMathOperator{\sign}{sign}

\DeclareMathOperator{\PSL}{PSL}
\DeclareMathOperator{\SL}{SL}
\DeclareMathOperator{\GL}{GL}
\DeclareMathOperator{\Sp}{Sp}

\DeclareMathOperator{\Sym}{Sym}
\DeclareMathOperator{\On}{O}
\DeclareMathOperator{\Str}{Str}
\DeclareMathOperator{\pt}{pt}
\DeclareMathOperator{\h}{H}


\renewcommand{\c}{\cop}


\renewcommand{\phi}{\varphi}
\renewcommand{\rho}{\varrho}

\newcommand{\C}{\mathbb{C}}
\newcommand{\Z}{\mathbb{Z}}
\newcommand{\N}{\mathbb{N}}
\newcommand{\R}{\mathbb{R}}
\newcommand{\D}{\mathbb{D}}

\newcommand{\Mca}{\mathcal{M}}
\newcommand{\Xca}{\mathcal{X}}
\newcommand{\Dca}{\mathcal{D}}




\newcommand{\id}{\rm{id}}



\newcommand{\Rott}{\widetilde \Rot}

\newcommand{\X}{\Xca}

\newcommand{\M}{\Mca}

\begin{document}
\bibliographystyle{plain}

\title[Fenchel-Nielsen coordinates]{Fenchel-Nielsen Coordinates for Maximal Representations}
\author{Tobias Strubel}
\date{\today}
\begin{abstract} We develop Fenchel-Nielsen coordinates for representations of surface groups into $\Sp(2n,\R)$ with maximal Toledo invariant. Analogous to classical Fenchel-Nielsen coordinates on the Teichm\"uller space they consist of a parametrization of representations of the fundamental group of a pair of pants and a careful investigation of the gluing. 

As applications we obtain results for non-closed surfaces, which have been known only for closed surfaces before: we count the number of connected components of their representation space and prove continuity for the limit curve for a certain type of representations.
\end{abstract}

\maketitle

\setcounter{tocdepth}{1}
\tableofcontents
\newcolumntype{C}[1]{>{\centering\arraybackslash}p{#1}}

\section{Introduction}\label{SecIntro}
 
 \subsection{Representations of Surface Groups and Coordinates}
 Representations of surface groups into Lie groups have received considerable interest during the last years. One starting point for their study has been the fact that the  Teichm\"uller space $\mathcal T(\Sigma_g)$ of a closed oriented surface $\Sigma_g$ can be realized as a connected component of the representation space $\Rep(\pi_1(\Sigma_g),\PSL(2,\R))$ via the holonomy representation. Some of the properties of holonomy representations have been proven for representations into other Lie groups: Hitchin representations and maximal representations (defined below) are  discrete and faithful and the mapping class group acts properly on the respective representations. Although there exist many approaches for coordinates for the classical Teichm\"uller space, so far no coordinates where known for maximal representations. (For Hitchin representations there exist coordinates generalizing Penner coordinates.)

In this paper we introduce and study Fenchel-Nielsen coordinates
on the space of maximal representations into $\Sp(2n,\R)$. Our two main results are an explicit description of the
character variety of maximal representations
of a pair of pants (Theorem \ref{MainThmIntro}), and the determination for general surfaces with
boundary of the number of connected components
of the space of maximal representations (Theorem \ref{ThmConnComp}).

Being concerned with representation of surfaces with boundary we need a definition for this case of the Toledo invariant having the right additivity properties. 

We begin with the classical definition. Let $G$ be a Hermitian Lie group of non-compact type and $\Gamma_g$ be the fundamental group of a closed oriented surface $\Sigma_g$ of genus $g\geq 2$. Using the identification $\h^2(\pi_1(\Sigma),\R)\simeq \h^2(\Sigma_g,\R)$ the \emph{Toledo invariant} of a representation $\varrho:\Gamma_g\rightarrow G$ is defined as follows:
 \[
   T_\varrho:=\langle \varrho^*(\kappa_G),[\Sigma_g]\rangle,
 \]
 where $\kappa_G\in \h_c^2(G,\R)$ is the K\"ahler class (defined Section \ref{SecToledo} below) of $G$ and $[\Sigma_g]\in \h_2(\Sigma_g,\R)$ is the orientation class.
 
Let now $\Sigma$ be a surface of finite type with boundary. Then $\h_c^2(\Sigma,\R)=0$, hence the definition above fails. But one can use bounded cohomology to circumvent this problem.
We have
\[
   \h^2_b(\pi_1(\Sigma),\R)\simeq \h_b^2(\Sigma,\R)\simeq \h^2_b(\Sigma,\partial \Sigma,\R),   
 \]
 which is an infinite dimensional vector space.
 
Then the pullback $\varrho^*\kappa_G^b$ of the bounded K\"ahler class $\kappa_G^*$ can be seen as an element of $\h^2_b(\Sigma,\partial \Sigma,\R)$ and again we define
 \[
   T_\varrho:=\langle\varrho^*\kappa_G^b,[\Sigma,\partial \Sigma]\rangle.
 \]
 We refer the reader to \cite{Surface} for all details. The Toledo invariant can not take arbitrary values. In fact
\[
|T_\varrho|\leq |\chi(\Sigma)|\rk \Dca,
\]
where $\Dca$ is the bounded symmetric associated with $G$. This inequality is sharp and we can define:
\begin{definition}
  A representation $\varrho:\pi_1(\Sigma)\rightarrow G$ into a Hermitian Lie group is \emph{maximal}, if its Toledo invariant is maximal, i.e.
  \[
    T_\varrho=|\chi(\Sigma)|\rk\Dca.
  \]
\end{definition}
The Toledo invariant is conjugation invariant, hence it descends to  
 \[
    \Rep_{max}(\pi_1(\Sigma),G):=\Hom_{max}(\pi_1(\Sigma),G)/G,
  \]
  where $\Hom_{max}(\pi_1(\Sigma),G)$ is the space of maximal representations on which $G$ acts by conjugation on it.

Furthermore the Toledo invariant is its additivity. Let $\Sigma=\Sigma_1\cup \Sigma_2$ be a surface cut into $\Sigma_1$ resp. $\Sigma_2$ along a simple closed curve. A representation $\varrho:\pi_1(\Sigma)\rightarrow G$ is said to be \emph{glued} from $\varrho_1:\pi_1(\Sigma_1)\rightarrow G$ and $\varrho_2:\pi_1(\Sigma_2)\rightarrow G$ if $\varrho|_{\pi_1(\Sigma_i)}=\varrho_i$. If $\varrho$ is glued from $\varrho_1$ and $\varrho_2$, then
\[
  T_\varrho=T_{\varrho_1}+T_{\varrho_2}.
\]
In particular a representation is maximal if and only if it is glued from maximal representations. For more details see Section \ref{SecGlueSp2nR}  and Theorem \ref{PropPropToledo}.

A strong motivation for the study of maximal representations is Goldman's theorem, which says that maximality of representations into $\PSL(2,\R)$ characterizes holonomy representations of marked hyperbolic structures on the underlying surface (\cite{GoldmanDiss}). 
  
Maximal representations have been studied using quite different techniques, such as bounded cohomology \cite{Surface,Anosov,AnnaMCG,LimitCurves,HS09,GW09,GWNew} and Higgs bundle techniques \cite{RepSymplHiggs,GothenComp,DefMaxRep}. For an introduction and an overview we refer the reader to \cite{BIWSl2}.

Before we turn to our main results, we briefly recall classical Fenchel-Nielsen coordinates on  Teichm\"uller space. They are enabled to the fact that each orientable surface of negative Euler characteristic can be decomposed into pairs of pants (see Figure \ref{FigPoP} and Figure \ref{DecompPoP}) and that marked hyperbolic structures are easy to describe on a pair of pants. 
\begin{figure}[ht]
\centering
\includegraphics{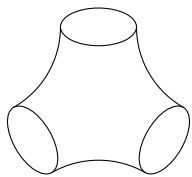} 
\caption{$\Sigma_{0,3}$ or Pair of Pants}
\label{FigPoP}
\end{figure}

\begin{figure}[ht]
\centering
\includegraphics[scale=0.7]{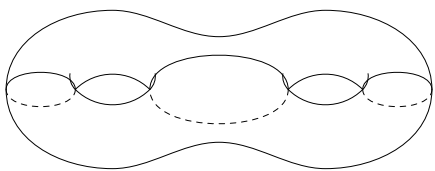} 
\caption{Decomposition of $\Sigma_2$ into two pairs of pants.}
\label{DecompPoP}
\end{figure}

Namely they are determined uniquely by the length of their boundaries (\emph{length parameter}). Gluing of two surfaces along geodesic boundaries is unique up to a rotation along the boundary, which yield another parameter (\emph{twist parameter}). It turns out that length and twist parameter determine the hyperbolic structure uniquely. For details on Fenchel-Nielsen coordinates see e.g. \cite{Abikoff} or \cite{Jost}.

To obtain similar coordinates for (maximal) representations we have to understand the representations of the fundamental group of the pair of pants on one hand and the gluing construction for representations on the other hand.
 
In the sequel we restrict ourselves to representations into the symplectic group $\Sp(2n,\R)$.

 To state the main theorem for generalized length parameters, denote by $B$ the set of matrices in $\GL(n,\R)$ whose eigenvalues have absolute value strictly less than $1$. Its closure $\bar B$ is the set of matrices whose eigenvalues have absolute value less or equal to $1$. We define
  \begin{align*}
     R:= \{(X_1,X_2,X_3)\in \bar B^3 | & X_3(X_2^\top)^{-1}X_1 \text{ is symmetric} \\ &\text{and positive definite}\}.
  \end{align*}
  Note that $\On(n)$ acts diagonally by conjugation on $R$. We denote the fundamental group of a pair of pants by
\[
  \Gamma_{0,3}=\langle C_3,C_2,C_1| C_3C_2C_1\rangle
\]
(see the general notation in Section \ref{SecTeichmRep}).

Here and in the sequel we write elements of $\Sp(2n,\R)$ as
\[
     g=\left(
\begin{array}{cc}
	A&B\\
	C& D
\end{array}\right),
\]
where $A$, $B$, $C$ and $D$ are real $n\times n$-matrices satisfying the following relations:
\begin{equation}
  A^\top D-C^\top B=I,\quad A^\top C=C^\top A,\quad D^\top B=B^\top D. \label{inspn}
\end{equation}

The following theorem was inspired by  \cite[Ch. 10]{LMcS09}, where results of \cite{FG06} are presented. Compare also with \cite{GoldmanConvex}. We will prove it in Section \ref{Sec03}.

\begin{theorem}\label{MainThmIntro}
There exists a homeomorphism 
\[
  f:R/\On(n)\rightarrow \Rep_{max}(\Gamma_{0,3},\Sp(2n,\R)).
\]
  It is induced by the map $\bar f :R\rightarrow \Rep(\Gamma_{0,3},\Sp(2n,\R))$ which assigns to $(X_1,X_2,X_3)\in R$ the representation $\varrho=\bar f(X_1,X_2,X_3)$ of $\Gamma_{0,3}$ into $\Sp(2n,\R)$ defined by
\begin{align*}
  \varrho(C_1):=c_1&=\left(\begin{array}{cc}
	X_1&0\\
	X_1+X_2^{-1}X_3^\top &(X_1^\top)^{-1}
\end{array}\right)\\
\varrho(C_2):=c_2&=\left(
\begin{array}{cc}
	-X_3^{-1}X_1^\top-X_2-(X_2^\top)^{-1}&X_2 + X_3^{-1}X_1^\top\\
	-X_3^{-1}X_1^\top - (X_2^\top)^{-1}&X_3^{-1}X_1^\top
\end{array}\right)\\
\varrho(C_3):=c_3&=\left(
\begin{array}{cc}
	(X_3^\top)^{-1}&-(X_3^\top)^{-1}-X_1^{-1}X_2^\top\\
	0& X_3
\end{array}\right).
\end{align*}

\end{theorem}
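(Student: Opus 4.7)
The plan is to split the argument into (I) verifying that $\bar f$ produces well-defined maximal representations, and (II) constructing a normal-form inverse modulo the $\On(n)$-action. For (I), I would first substitute the block forms of the $c_i$ into the symplectic relations \eqref{inspn}; after cancellation, each check reduces to the single identity $X_1^\top X_2^{-1} X_3^\top = X_3(X_2^\top)^{-1} X_1$, which is exactly the symmetry clause in the definition of $R$. The fundamental-group relation $c_3 c_2 c_1 = I$ is then a routine block-matrix multiplication: the elaborate shape of $c_2$ is engineered so that its contribution cancels the off-diagonal blocks of $c_3 c_1$. Maximality I would deduce via the additivity of the Toledo invariant recalled in Section~\ref{SecGlueSp2nR}: $c_1$ (lower block triangular) fixes the standard Lagrangian $L^+ = \{0\}\oplus\R^n$, $c_3$ (upper block triangular) fixes $L^- = \R^n\oplus\{0\}$, and $c_2=(c_3c_1)^{-1}$ fixes some third Lagrangian $L^0$. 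The positive-definiteness hypothesis on $X_3(X_2^\top)^{-1}X_1$ should translate into maximal positivity of the Maslov triple $(L^-, L^0, L^+)$, so that the boundary contributions sum to $n = \rk\Dca\cdot|\chi(\Sigma_{0,3})|$.

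For (II) I would first exhibit the embedding $O \mapsto \mathrm{diag}(O, O)$ of $\On(n)$ into $\Sp(2n,\R)$, which stabilises both $L^+$ and $L^-$; a direct computation then shows that conjugation by $\mathrm{diag}(O, O)$ carries $\bar f(X_1, X_2, X_3)$ to $\bar f(OX_1 O^\top, OX_2 O^\top, OX_3 O^\top)$, so $\bar f$ descends to a continuous map $f$ on $R/\On(n)$. For surjectivity the structural input is that the image of any boundary generator of a maximal representation fixes a Lagrangian and that the Lagrangians fixed by $\varrho(C_1)$ and $\varrho(C_3)$ are automatically transverse; conjugating inside $\Sp(2n,\R)$ to send them to $L^+$ and $L^-$ forces $\varrho(C_1)$ and $\varrho(C_3)$ into the block triangular normal forms of $c_1$ and $c_3$. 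The matrices $X_1, X_3$ are then read off as the diagonal blocks, and $X_2$ is forced by $\varrho(C_2) = \varrho(C_3)^{-1}\varrho(C_1)^{-1}$. Injectivity modulo $\On(n)$ follows because any further symplectic conjugation must preserve the ordered pair $(L^+, L^-)$ and hence lies in this diagonal $\On(n)$. Continuity of $f$ is immediate from the polynomial formulas of $\bar f$, and continuity of $f^{-1}$ uses the continuous dependence of the fixed Lagrangian on its fixing element.

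The main obstacle, I expect, is the Lagrangian normal-form input: establishing that every boundary generator of a maximal representation actually fixes a Lagrangian, that two such fixed Lagrangians are automatically transverse, and that the eigenvalue bound $|\lambda|\le 1$ defining $\bar B$ is precisely the restriction carved out by maximality of the Toledo invariant. This structural information comes from the bounded-cohomological theory of maximal representations cited in the introduction; once it is in hand, the rest of the proof is essentially bookkeeping in $2\times 2$ blocks.
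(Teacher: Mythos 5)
Your overall architecture (direct verification of the relations, then a normal-form inverse modulo $\On(n)$) matches the paper's, but the inverse construction in part (II) has two genuine gaps. First, the fixed Lagrangian of $\varrho(C_i)$ is in general \emph{not unique} (already for $\SL(2,\R)$ a hyperbolic element has two fixed points in $S^1$), so ``the Lagrangian fixed by $\varrho(C_1)$'' does not define a map: different choices of fixed points produce non-conjugate parameter triples for the same representation class. This is exactly the issue raised in Remark~\ref{RemNonUnique}, and the paper resolves it with Proposition~\ref{PropNormalForm}: each boundary image has a unique fixed point at which it acts \emph{non-expandingly}, and that canonical fixed point is the one sent to the normal form. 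This is also where the constraint $X_i\in\bar B$ comes from --- it is a normalization ensuring injectivity of the parametrization, not, as you assert at the end, a restriction ``carved out by maximality''; any $(X_1,X_2,X_3)\in\GL(n,\R)^3$ with $X_3(X_2^\top)^{-1}X_1$ symmetric positive definite already yields a maximal representation. Second, your injectivity argument normalizes only the ordered pair $(L^+,L^-)$, whose stabilizer in $\Sp(2n,\R)$ is the full $\GL(n,\R)$ embedded as $\mathrm{diag}(A,(A^\top)^{-1})$, not the diagonal $\On(n)$. To cut the residual ambiguity down to $\On(n)$ one must normalize the maximal \emph{triple} of canonical fixed points to $(0,e,\infty)$ (equivalently, also pin down the Lagrangian fixed by $\varrho(C_2)$), which is what the paper does.

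There is also a smaller gap in your maximality verification in part (I). The Toledo invariant of a $\Gamma_{0,3}$-representation is not determined by the Maslov index of the triple of fixed Lagrangians alone: by Theorem~\ref{ThmFormulaIntro} it equals $\frac{1}{2}\bigl(\beta(y_1,y_2,y_3)+\beta(y_1,c_1\cdot y_3,y_2)\bigr)$, and Corollary~\ref{boundarycond} requires \emph{both} summands to equal $n$. In the normal form the triple of fixed points is $(0,e,\infty)$, which is automatically maximal regardless of the $X_i$; the positive-definiteness of $X_3(X_2^\top)^{-1}X_1$ is what forces the \emph{second} Maslov index (computed from $c_3\cdot e=-X_1^{-1}X_2^\top X_3^{-1}$) to be $n$. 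As written, your argument would certify maximality for representations that are not maximal. Additivity of the Toledo invariant is of no help here since the pair of pants is the indecomposable building block.
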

This result can be generalized for maximal representations into any Hermitian Lie group of tube type, see \cite[Prop. 3.4.4]{S11}.

The $X_i$ can be seen as \emph{generalized length parameters}. We discuss properties of these parameters in Section \ref{Sec2} below.

Above we introduced the gluing for representations. The following theorem distinguishes the cases in which we can glue representations and introduces the corresponding twist parameter.

\begin{theorem}\label{PropGlueSp2nRIntro}
 Let $\varrho:\pi_1(\Sigma) \rightarrow \Sp(2n,\R)$ and $\bar \varrho:\pi_1(\bar \Sigma) \rightarrow \Sp(2n,\R)$ be (non-necessarily distinct) maximal representation with distinct boundary components $C\subset \Sigma$ and $\bar C\subset \bar \Sigma$. 
\begin{enumerate}
\item
We can conjugate $\varrho$ and $\bar \varrho$ such that
  \begin{equation}\label{EqcNormalform1Intro}
  c:=\varrho(C)=\left(\begin{array}{cc}
	X&0\\
	X+(X^\top)^{-1}S &(X^\top)^{-1}
\end{array}\right)
\end{equation} 
and 
\begin{equation}
\bar c:=\bar \varrho(\bar C)=\left(
\begin{array}{cc}
	(\bar X^\top)^{-1}&-(\bar X^\top)^{-1}-\bar S\bar X \\
	0& \bar X
\end{array}\right)\end{equation}
with $X$ and $\bar X$ invertible and $S$ and $\bar S$ symmetric positive definite. 
\item 
The representation classes $[\varrho]$ and $[\bar \varrho]$ can be glued along $C$ and $\bar C$ if and only if $\varrho(C)^{-1}$ and $\bar \varrho(\bar C)$ are conjugate in $\Sp(2n,\R)$.

 \item Suppose $X$ and $\bar X$ contracting, i.e. their eigenvalues have absolute value strictly less than $1$. 
Then $\bar c$ and $c^{-1}$ are conjugate in $\Sp(2n,\R)$ if and only there exists $G\in \GL(n,\R)$ such that $X^\top=G\bar X G^{-1}$.
\item It $X$ or $\bar X$ has an eigenvalue of absolute value $1$, then $\bar c$ and $c^{-1}$ are not conjugate in $\Sp(2n,\R)$.
\end{enumerate}
\end{theorem}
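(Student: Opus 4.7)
The plan is to handle the four parts in sequence: (i) reduces to Theorem \ref{MainThmIntro}, (ii) is a van Kampen argument, and (iii)--(iv) are linear-algebraic questions about symplectic conjugacy of block-triangular matrices.

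For (i) I would pick a pair of pants $P\subset \Sigma$ having $C$ as one of its boundary components (taking $P=\Sigma$ when $\Sigma$ is itself a pair of pants). Additivity of the Toledo invariant together with maximality of $\varrho$ forces $\varrho|_{\pi_1(P)}$ to be maximal, so Theorem \ref{MainThmIntro} puts it in the parametrized form after a conjugation. Identifying $C$ with the generator $C_1$ gives the stated lower-triangular shape of $c$, with $X=X_1$ and $S=X_1^\top X_2^{-1}X_3^\top$; symmetry and positive definiteness of $S$ follow at once from the defining condition of $R$ (since $S^\top=X_3(X_2^\top)^{-1}X_1$). Identifying $\bar C$ with the generator $C_3$ of a pants in $\bar\Sigma$ yields the asserted upper-triangular form of $\bar c$, with $\bar S=(X_3(X_2^\top)^{-1}X_1)^{-1}$.

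For (ii), van Kampen presents the fundamental group of $\Sigma\cup_{C=\bar C}\bar\Sigma$ as an amalgamation over $\langle C\rangle$. A representation compatible with $\varrho$ and $\bar\varrho$ exists precisely when $\varrho(C)=\bar\varrho(\bar C)^{-1}$ in $\Sp(2n,\R)$ (the inverse because the two inclusions induce opposite orientations on the common curve). Passing to representation classes, this is equivalent to $\varrho(C)^{-1}$ and $\bar\varrho(\bar C)$ being conjugate in $\Sp(2n,\R)$; conversely any such conjugation lets us replace $\bar\varrho$ by a conjugate so that the boundary holonomies become genuine inverses, and van Kampen then produces the glued representation.

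Part (iii) is the central algebraic step. A direct computation gives $c^{-1}=\left(\begin{smallmatrix}X^{-1}&0\\-X^\top-SX^{-1}&X^\top\end{smallmatrix}\right)$, so when $X$ is contracting $\{0\}\times\R^n$ is the stable Lagrangian of $c^{-1}$ on which the action is $X^\top$; analogously $\bar c$ acts as $\bar X$ on its own Lagrangian stable subspace. Any symplectic $g$ with $gc^{-1}g^{-1}=\bar c$ must intertwine these stable subspaces, and the resulting intertwiner $G\in\GL(n,\R)$ satisfies $X^\top=G\bar X G^{-1}$. Conversely, given such $G$, I would construct the conjugating element $g\in\Sp(2n,\R)$ explicitly in block form using $G$ together with an auxiliary symmetric matrix matching the off-diagonal pieces; the positive definiteness of $S$ and $\bar S$ guarantees that the associated Sylvester-type equation is solvable, and a direct verification completes the argument.

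The main obstacle is (iv), since the dynamical stable/unstable argument of (iii) breaks down on the generalized eigenspace of an eigenvalue of absolute value one. The strategy is to produce a symplectic conjugacy invariant on that eigenspace which distinguishes $c^{-1}$ from $\bar c$. Already for $n=1$ and eigenvalue $1$, $c^{-1}$ is $\SL(2,\R)$-conjugate to $\left(\begin{smallmatrix}1&1+S\\0&1\end{smallmatrix}\right)$ with $1+S>0$, whereas $\bar c$ is conjugate to $\left(\begin{smallmatrix}1&-(1+\bar S)\\0&1\end{smallmatrix}\right)$ with $-(1+\bar S)<0$, and these lie in opposite parabolic conjugacy classes of $\SL(2,\R)$. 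In general one restricts to the generalized eigenspace for a unit-modulus eigenvalue, invokes the standard classification of symplectic conjugacy classes there, and shows that the positivity of $S$ and $\bar S$ forces $c^{-1}$ and $\bar c$ onto opposite sides of the relevant sign invariant.
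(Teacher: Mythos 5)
Parts (i)--(iii) of your proposal are correct and follow essentially the paper's own route. Part (i) is the reduction to Theorem \ref{MainThmIntro}, and your identifications $S=X_1^\top X_2^{-1}X_3^\top$ and $\bar S=(X_3(X_2^\top)^{-1}X_1)^{-1}$ are exactly right; (ii) is the amalgamation/HNN discussion opening Section \ref{SecGlueSp2nR}. Your stable-Lagrangian argument for (iii) is the paper's argument in disguise: under the identification of $\check S$ with the Lagrangian Grassmannian, the stable Lagrangians of $c^{-1}$ and $\bar c$ are precisely the invertible fixed points $Y$, $\bar Y$ produced in Lemma \ref{LemContrExp}, your ``Sylvester-type equation'' is the equation $X^\top Y^{-1}X-Y^{-1}=X^\top X+S$ solved there by a convergent series, and positive definiteness of $S$ is what makes $Y$ invertible, i.e.\ transversal to $0$, so that $c^{-1}$ and $\bar c$ can both be symplectically diagonalized and then matched by a block-diagonal element $\mathrm{diag}(G,(G^\top)^{-1})$.

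The gap is in (iv), which you rightly call the main obstacle but then only sketch. Your plan reduces (iv) to the assertion that positive definiteness of $S$ and $\bar S$ forces \emph{all} the sign (Krein-type) invariants attached to the unit-modulus part of $c^{-1}$ and $\bar c$ to come out opposite, and you verify this only for $n=1$ with eigenvalue $1$. In general the unit-modulus spectrum of $X$ can contain $-1$ and complex conjugate pairs and can carry nontrivial Jordan blocks; the symplectic conjugacy classification then involves one sign per relevant Jordan block, and proving that positivity of $S$ pins down every one of these signs is precisely the content of part (iv), not a routine appeal to the classification. The paper sidesteps this entirely: by Proposition \ref{PropNormalForm}, $0$ is the \emph{unique} point of $\check S$ at which $c^{-1}$ (resp.\ $\bar c$, after normalization) acts non-expandingly, so any conjugator must fix $0$ and is therefore block lower-triangular; writing out the conjugation equation blockwise and testing the resulting identity against a single eigenvector $v$ of $X^{-1}$ with unit-modulus eigenvalue equates $v^*Pv$ for a positive definite $P$ with $-v^*Qv$ for a positive definite $Q$, a contradiction (see the proof of Theorem \ref{PropGlueSp2nR}(iv)). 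To complete your version you must either carry out the sign-invariant analysis for all unit-modulus Jordan types or substitute an argument of this direct kind; note also the easy preliminary reduction that if only one of $X,\bar X$ has a unit-modulus eigenvalue then the spectra of $c^{-1}$ and $\bar c$ already differ.
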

If $c^{-1}=g\bar cg^{-1}$ then $G$ is the \emph{generalized twist parameter}. Theorem \ref{PropGlueSp2nRIntro} follows from Theorem \ref{PropGlueSp2nR}, where we also make $g$ explicit.

Using Theorems \ref{MainThmIntro} and \ref{PropGlueSp2nRIntro} one can obtain coordinates for spaces of all maximal representation. We present some examples and a general description of the coordinates in Section \ref{SecMoreParam}.

From these coordinates we can deduce some applications. We can count connected components of spaces of maximal representations of fundamental groups of surfaces with at least one boundary component:
\begin{theorem}\label{ThmConnComp}
  If $m\geq 1$ then $\Rep_{max}(\Gamma_{g,m}, \Sp(2n,\R))$ has $2^{2g+m-1}$ connected components.
\end{theorem}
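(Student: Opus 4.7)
The plan is to argue by induction on the number $p = 2g+m-2$ of pairs of pants in a pants decomposition of $\Sigma_{g,m}$, noting that $2^{2g+m-1}=2^{p+1}=4\cdot 2^{p-1}$. The base case $p=1$ covers both $\Sigma_{0,3}$ and $\Sigma_{1,1}$, and the inductive step amounts to showing that enlarging $\Sigma$ by attaching a further pair of pants (glued along one or two boundary circles) multiplies the number of connected components by exactly $2$.

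For the base case $\Sigma_{0,3}$, Theorem~\ref{MainThmIntro} identifies the character variety with $R/\On(n)$. The condition that $X_3(X_2^\top)^{-1}X_1$ be positive definite forces $\det X_1\det X_2\det X_3>0$, so the $\On(n)$-invariant continuous map
\[
\sigma\colon R\to E:=\{\epsilon\in\{\pm 1\}^3 \mid \epsilon_1\epsilon_2\epsilon_3=+1\}, \quad (X_1,X_2,X_3)\mapsto(\sgn\det X_1,\sgn\det X_2,\sgn\det X_3),
\]
descends to $R/\On(n)$ and separates at least four open-closed pieces. I would then show each fibre $\sigma^{-1}(\epsilon)$ is non-empty, by exhibiting a diagonal representative of each sign-pattern with small absolute values and verifying positive-definiteness directly, and path-connected, by combining the rescaling $(X_1,X_2,X_3)\mapsto (tX_1,tX_2,tX_3)$ for $t\in(0,1]$ (which preserves membership in $R$) with explicit interpolations between diagonal matrices of the same sign-pattern that keep all determinants strictly away from zero. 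This yields exactly $4=2^{0+3-1}$ components. The case $\Sigma_{1,1}$ is handled by regarding it as a self-gluing of two boundary circles of $\Sigma_{0,3}$ via Theorem~\ref{PropGlueSp2nRIntro} and applying the self-gluing analysis of the inductive step below.

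For the inductive step I would glue an extra pant $P$ to a simpler surface $\Sigma'$ and count components via Theorem~\ref{PropGlueSp2nRIntro}. A connected component of $\Rep_{max}(\pi_1(\Sigma'),\Sp(2n,\R))$ prescribes (in particular) the sign $\sgn\det X$ on every boundary circle of $\Sigma'$; matching this against the four components of $\Rep_{max}(\Gamma_{0,3},\Sp(2n,\R))$ determines how many pant components are glueable, and the residual freedom comes from the twist parameter $G\in\GL(n,\R)$ modulo the centralizer of the boundary holonomy. A case analysis splits into the separating situation (where the two sides of the gluing curve lie on distinct connected pieces, and a global $\Sp(2n,\R)$-conjugation can absorb the $\pm 1$ discrete component of $\GL(n,\R)$, making the twist moduli connected) and the non-separating/self-gluing situation (where no such conjugation exists and $\sgn\det G$ survives as a genuine $\Z/2$-invariant). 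Combining the matching count with the twist count, the one-circle attachment, the two-circle attachment, and the self-gluing of an existing pair of boundary circles all turn out to multiply the component count by exactly $2$.

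The main obstacle is precisely this twist analysis: proving that the twist parameter $G$ in Theorem~\ref{PropGlueSp2nRIntro} contributes exactly one $\Z/2$-invariant in the non-separating case and none in the separating case. This requires a careful identification of the centralizer of a ``maximal boundary'' element inside $\Sp(2n,\R)$, and of the way the relation $X^\top = G\bar X G^{-1}$ interacts with the overall conjugation equivalence on $\Hom_{max}(\pi_1(\Sigma),\Sp(2n,\R))$. Once this dichotomy is established, iterating the doubling from the base case $\Sigma_{0,3}$ yields $4\cdot 2^{p-1}=2^{2g+m-1}$ components.
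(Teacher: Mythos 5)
Your base case for $\Sigma_{0,3}$ is fine and agrees with the paper's Proposition \ref{PropCountR}, but the inductive step as stated contains a concrete arithmetic error. A self-gluing of two boundary circles of $\Sigma_{g,m}$ produces $\Sigma_{g+1,m-2}$, and $2(g+1)+(m-2)-1=2g+m-1$: the component count must be \emph{preserved}, not doubled. The correct bookkeeping is that the matching condition $\sgn\det X=\sgn\det\bar X$ on the two circles being identified kills half of the admissible sign patterns, while the twist contributes one new $\Z/2$, so the two factors cancel. As written, your scheme gives $8$ components for $\Sigma_{1,1}$ (which you explicitly propose to treat as a self-gluing of $\Sigma_{0,3}$), whereas Proposition \ref{PropG11} forces $\det X_2>0$ and leaves only the signs of $\det X_1$ and $\det G$ free, i.e.\ $4=2^{2\cdot1+1-1}$ components. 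Relatedly, the matching condition from Theorem \ref{PropGlueSp2nRIntro} is conjugacy of $X^\top$ and $\bar X$ in $\GL(n,\R)$ (and gluing is impossible when an eigenvalue lies on the unit circle), not merely agreement of determinant signs; so the glueable locus inside a fixed pair of sign components is a proper subset whose connectivity still has to be established.

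The second, more structural gap is that counting $\pi_0$ of the parameter space does not by itself count $\pi_0$ of $\Rep_{max}(\Gamma_{g,m},\Sp(2n,\R))$ unless the parametrization is shown to be a homeomorphism (or at least that the sign data descends to a locally constant function on the representation variety). Beyond the pair of pants, the paper only asserts bijections (Propositions \ref{PropG11}--\ref{PropG20}), and it deliberately splits the argument: the lower bound comes from an \emph{intrinsic} locally constant invariant, namely the first Stiefel--Whitney class of the Anosov bundle evaluated via \cite[Lemma 4.11]{GW09} (Proposition \ref{RemComp}), which identifies $\sgn\det$ of the length and handle-twist parameters with $sw_1(\varrho)$ on suitable curves; the upper bound comes from explicit deformations of the parameters to standard ones (Lemmas \ref{LemDeformation03} and \ref{LemDeformation11} and the recurrence of Proposition \ref{PropUpperBound}), which only needs continuity of the map from parameters to representations. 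Your separating/non-separating dichotomy for the twist is qualitatively the same as what Proposition \ref{RemComp} encodes (only the handle twists $H_k$ carry a sign invariant), but to make your route rigorous you would either have to prove the gluing parametrizations are homeomorphisms onto their images, or import an invariant like $sw_1$ anyway, and you would have to repair the self-gluing factor.
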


So far connected components have been counted only for representations of fundamental groups of closed surfaces \cite{RepSymplHiggs,GothenComp,DefMaxRep}. In contrast to the results there, we have no special connected components for $\Sp(4,\R)$.

We will prove Theorem \ref{ThmConnComp} in Section \ref{SecApp}. There we also present a class of representations which are Anosov and for which the limit curve is continuous. 

An important ingredient for the proof of Theorem \ref{MainThmIntro}, which might also be of independent interest, is:

\begin{theorem}\label{ThmFormulaIntro}
   Let $G$ be a Hermitian Lie group of tube type. 
   Let $\varrho:\Gamma_{0,3}\rightarrow G$ be a representation and denote $c_i:=\varrho(C_i)$. Assume that each $c_i$ has a fixed point $y_i$ in the Shilov boundary $\check S$. Then we can express the Toledo invariant as follows:
   \begin{equation}\label{Formula}
      T_{\varrho}=\frac{1}{2}\big(\beta(y_1,y_2,y_3)+\beta(y_1,c_1\cdot y_3,y_2)\big),
   \end{equation}
   where $\beta$ denotes the Maslov index.
\end{theorem}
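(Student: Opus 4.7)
The strategy is to represent the bounded K\"ahler class $\kappa_G^b$ by (half of) the Maslov cocycle on the Shilov boundary and then to evaluate it on an equivariant development of a relative fundamental cycle for $\Sigma_{0,3}$ whose vertices are, via the fixed points $y_i$, sent into $\check S$. Concretely, for a Hermitian Lie group $G$ of tube type, $\kappa_G^b$ is represented by the cocycle
\begin{equation*}
c_\beta(g_0, g_1, g_2) \;=\; \tfrac{1}{2}\,\beta(g_0\cdot y_0,\,g_1\cdot y_0,\,g_2\cdot y_0),\qquad y_0\in\check S,
\end{equation*}
the factor $\tfrac{1}{2}$ recording the standard normalization between $\kappa_G^b$ and the Maslov cocycle (see \cite{Surface}). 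Since $\beta$ is $G$-invariant and defined on all of $\check S^3$, one may evaluate it on any triple of points in $\check S$, not only on a single $G$-orbit; this is the feature that lets the three boundary components contribute via their individual fixed points.

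Next, use the hypothesis $\varrho(C_i)y_i = y_i$ to build an equivariant boundary map. Choose, for each $i$, a lift $\tilde v_i$ of a point $v_i \in \partial_i$ lying on the component of the preimage of $\partial_i$ in $\tilde\Sigma_{0,3}$ that is stabilised by the cyclic subgroup $\langle C_i\rangle$, and set $\phi(\gamma\cdot \tilde v_i):=\varrho(\gamma)\cdot y_i$; this is well-defined by the fixed-point hypothesis. Choose a CW-structure on $\Sigma_{0,3}$ whose lifted fundamental domain in $\tilde\Sigma_{0,3}$ is a quadrilateral with all four corners lying on lifts of $\partial\Sigma_{0,3}$, and triangulate this quadrilateral into two 2-simplices $\tilde T,\tilde T'$ along an internal diagonal; with orientations chosen so that the boundary contributions cancel, the image $[\tilde T]+[\tilde T']$ in $C_2(\Sigma_{0,3},\partial\Sigma_{0,3})$ is a relative fundamental cycle. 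Arrange the lifts so that $\tilde T$ has vertex triple $(\tilde v_1,\tilde v_2,\tilde v_3)$. Then a combinatorial analysis of the gluing pattern of the fundamental domain, using the defining relation $C_3 C_2 C_1 = 1$ to trace how the lifts of $v_3$ appear across the diagonal $[\tilde v_1,\tilde v_2]$, identifies the vertex triple of $\tilde T'$ as $(\tilde v_1, C_1\cdot\tilde v_3, \tilde v_2)$.

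Evaluating $\varrho^* c_\beta$ on the relative fundamental cycle via $\phi$ then yields
\begin{equation*}
T_\varrho \;=\; \tfrac{1}{2}\,\beta(y_1, y_2, y_3) + \tfrac{1}{2}\,\beta(y_1, c_1\cdot y_3, y_2),
\end{equation*}
which is the stated formula. The principal obstacle is the combinatorial step identifying the second triangle: one has to show that its third vertex is precisely $C_1\cdot\tilde v_3$ (and not some other $\pi_1$-translate) and that it occupies the middle position of the cyclic order. This requires careful book-keeping of the $\pi_1(\Sigma_{0,3})$-action on the fundamental domain of $\tilde\Sigma_{0,3}$, for which the relation $C_3 C_2 C_1 = 1$ is essential. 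Secondary technical points are the orientation conventions for the relative fundamental cycle and the normalization factor $\tfrac{1}{2}$ between $\beta$ and $\kappa_G^b$; both are standard.
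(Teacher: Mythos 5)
Your strategy is genuinely different from the paper's. The paper does not triangulate a fundamental domain at all: it invokes the rotation-number formula $T_\varrho=-\sum_i\Rott_{\kappa_G^b}(\tilde\varrho(C_i))$ of Burger--Iozzi--Wienhard, chooses lifts $\tilde c_1,\tilde c_2$ to $\tilde G$ fixing points of $\check R$ so that their rotation numbers vanish, reduces everything to $\Rott_{\kappa_\beta}(\tilde c_2\tilde c_1)$ via the homogeneous quasimorphism property, and then converts this to Maslov indices through the Souriau index $m$ on the universal cover of the Shilov boundary (Corollary \ref{CorRotTau}, Lemmas \ref{LemSouriau} and \ref{contrmaximal}) before applying the cocycle identity for $\beta$. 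Your route --- representing $\kappa_G^b$ by half the Maslov cocycle and evaluating it on a two-triangle relative fundamental cycle whose vertices are pushed to the fixed points $y_i$ --- is a legitimate alternative in principle, and it correctly explains why the answer is a sum of exactly two Maslov indices with coefficient $\tfrac12$.

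However, as written your argument has a genuine gap exactly where the content of the formula lies: the identification of the vertex triple of the second triangle as $(\tilde v_1, C_1\cdot\tilde v_3,\tilde v_2)$. You state this as the output of ``a combinatorial analysis of the gluing pattern'' and then, in your closing paragraph, concede that one still ``has to show'' that the third vertex is $C_1\cdot\tilde v_3$ rather than some other translate and that it sits in the middle slot of the cyclic order. That step is not book-keeping one can defer: a different translate or a different cyclic position changes the second term of the formula (e.g.\ to $\beta(y_1,y_2,c_1\cdot y_3)=-\beta(y_1,c_1\cdot y_3,y_2)$), so without carrying it out you have matched the target formula rather than derived it. A second, smaller gap is the justification that evaluating your vertex-dependent cocycle on the relative cycle computes $T_\varrho$ as defined through $\h^2_b(\Sigma,\partial\Sigma,\R)$: replacing the single basepoint $y_0$ by the boundary map $\phi$ changes the cocycle by the coboundary of a bounded $1$-cochain, and the pairing of such a coboundary with a \emph{relative} cycle need not vanish. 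The fix is available --- your cochain vanishes on simplices carried by a boundary component because $\phi$ collapses each such component to the fixed point $y_i$ and $\beta(y_i,y_i,y_i)=0$, so it is a relative cocycle, and injectivity of $\h^2_b(\Sigma,\partial\Sigma,\R)\to\h^2_b(\Sigma,\R)$ (amenability of the boundary subgroups) identifies its class --- but this is exactly the delicate point of the relative theory and must be said explicitly, not absorbed into ``standard''. Note also that for a general (non-maximal) $\varrho$ the $y_i$ need not be pairwise transverse, so you must use the extension of $\beta$ to arbitrary triples of $\check S$ and its cocycle property there.
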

We introduce the Shilov boundary and the Maslov index in Section \ref{SecBSD} and prove this formula in Section \ref{SecFormula} below.

\subsection{More on Coordinates}\label{Sec2}

The generalized length parameters $X_i$ from Theorem \ref{MainThmIntro} are related to the length parameters for the Teichm\"uller space introduced above as follows: consider a hyperbolic structure on a pair of pants and denote by $h$ a \emph{hyperbolization} (i.e. a monodromy representation $h:\pi_1(\Sigma)\rightarrow \PSL(2,\R)$) for it. Then the length of each boundary component $C_i$ is equal to the translation length of $h(C_i)$, where for a metric space $X$ and an isometry $g$ of $X$ the \emph{translation length} is defined as
\[
  \tau_X(g):=\inf_{x\in X}d(x,gx).
\]
For a hyperbolic element $g$ of $\PSL(2,\R)$ with eigenvalues $\lambda$ and $\lambda^{-1}$, the translation length is
\[
  \tau_\D(g)=2\big|\log |\lambda|\big|.
\]
For $G=\PSL(2,\R)$, the matrices $X_i$ are real numbers and eigenvalues of the $\c_i$. Therefore they determine the boundary length of $C_i$.

The generalized length parameters $X_i$ had to satisfy  $X_3(X_2^\top)^{-1}X_1$ symmetric and positive definite. A direct calculation shows that $c_3c_2c_1=I$ if and only if $X_3(X_2^\top)^{-1}X_1$ is symmetric. Its signature \footnote{The signature $\sgn$ of a symmetric matrix is the number of positive eigenvalues minus the number of negative eigenvalues.} determines the Toledo invariant. Indeed with Formula \eqref{Formula} we get
  \[
    T_\varrho=\frac{1}{2}(n+\sgn X_3(X_2^\top)^{-1}X_1).
  \]
We will use this to write down also a certain type of non-maximal representations explicitly, see Proposition \ref{Prop03} below.

 With the formulas from Theorem \ref{MainThmIntro} any triple $(X_1,X_2,X_3)\in \GL(n,\R)^3$ with $X_3(X_2^\top)^{-1}X_1$ symmetric positive definite yields a maximal representation.
The $X_i$ are required to be in $\bar B\subset \GL(n,\R)$ to give unique parameters (c.f. Remark \ref{RemNonUnique}).

Theorem \ref{PropGlueSp2nRIntro} also has a geometric interpretation for $G=\Sp(2,\R)=\SL(2,\R)$. It  corresponds to the fact, that one can glue hyperbolic surfaces along two geodesic boundaries if and only if they have the same length. Indeed, let $\Sigma_1$ and $ \Sigma_2$ be surfaces with boundary components $C_1$ resp. $C_2$ and $\varrho_1$ and $ \varrho_2$ hyperbolizations. Define $c:=\varrho_1(C_1)$ and $\bar c:= \varrho_2(C_2)$  and assume that they are in the form of  Theorem \ref{PropGlueSp2nRIntro}. Then the lengths of the corresponding boundaries are equal to the translation length of $c$ resp. $\bar c$. The translation length is uniquely determined by the eigenvalues of $c$ and $\bar c$.   
Then one can glue along these boundary components if and only if $X$ and $\bar X$ are equal and their absolute values is different from $1$, i.e. $c$ and $\bar c$ are hyperbolic and have the same translation length. But this is precisely the statement of Theorem \ref{PropGlueSp2nRIntro}.

\subsection{Guide for the reader}
This article is structured as follows:

Section \ref{SecPrelims} contains all necessary notions and definitions. Since it contains no new results, the reader may skip it and come back to it if necessary.

In Section \ref{SecFormula} we introduce the K\"ahler class which we used to define the Toledo invariant as well as the Souriau index and prove Formula \ref{Formula}. 

Theorem \ref{MainThmIntro} will be proven in Section \ref{Sec03}. 

In Section \ref{SecGlue} we prove Theorem \ref{PropGlueSp2nRIntro}. 

The most general statement on the parameters as well as a more detailed discussion on some examples of representations can be found in Section \ref{SecMoreParam}.

Finally we prove the Applications in Section \ref{SecApp}.

\textbf{Acknowledgement}\\
This text is a part of the authors PhD thesis. I thank Marc Burger for the supervision of the thesis as well as Olivier Guichard for detailed feedback concerning earlier version of the thesis. 

I am grateful to Tobias Hartnick, Alessandra Iozzi, Anke Pohl and Anna Wienhard for many useful conversations and remarks concerning earlier versions of this text.

I thank the Institut Henri Poincar\'e for their hospitality.

I was supported by SNF grants PP002-102765 and 200021-127016.
\section{Preliminaries}\label{SecPrelims}

In this section we introduce the terminology used throughout this paper. 
\subsection{Notations}\label{SecTeichmRep}

We denote by $\Sigma_{g,m}$ an oriented surface of genus $g$ with $m$ boundary components and  by $\Gamma_{g,m}$  its fundamental group. A surface of genus $g$ without boundary is denoted by $\Sigma_g$, its fundamental group by $\Gamma_g$. 

We fix the following standard presentation for $\Gamma_{g,m}$
\begin{align*}
  \Gamma_{g,m}=\langle & A_1,B_1,\ldots,A_g,B_g,C_1,\ldots, C_{m} | \\
   & [A_g,B_g]\ldots [A_1,B_1]C_m\ldots C_1=e \rangle.
\end{align*} 
The $C_j$ correspond to loops around boundary components. We call the $A_i$, $B_i$ and $C_j$ the \emph{standard generators}.

Let $G$ be a topological group. We denote by $ \Hom(\Gamma_{g,m},G)$ the set of homomorphism from $\Gamma_{g,m}$ into $G$. The group $G$ acts by pointwise conjugation on $ \Hom(\Gamma_{g,m},G)$. The quotient space with respect to this action is
\[
  \Rep(\Gamma_{g,m},G)=\Hom(\Gamma_{g,m},G)/G,
\]
the \emph{representation variety}. We equip $\Hom$ and $\Rep$ with a topology. If $m\neq 0$, then $\Gamma_{g,m}$ is a free group of degree $2g+m-1$ and $\Hom(\Gamma_{g,m},G)$ can be identified with  $G^{2g+m-1}$ and we can carry over its topology. If $m=0$, then we have only one relation in $\Gamma_g$, hence $\Hom(\Gamma_g,G)$ is a quotient with respect to this relation and we can use the quotient topology.  If $G$ is algebraic, then so is $ \Hom(\Gamma_{g,m},G)$. We also equip $\Rep(\Gamma_{g,m},G)$ with the quotient topology.

\subsection{Hermitian Symmetric Spaces of Tube Type and Jordan Algebras}\label{SecBSD}
The main references for this section are \cite{FK} and \cite{Helgason}.
\begin{definition}
  A Lie group $G$ is \emph{Hermitian} if the associated symmetric space $\mathcal X$ has an invariant complex structure. The symmetric space $\mathcal X$ is called \emph{Hermitian symmetric space}. The group $G$ and its symmetric space are of \emph{tube type}, if $\mathcal X$ is biholomorphic to a tube domain
  \[
     T_\Omega=V \oplus i\Omega\subset V^\C,
  \]
where $V$ is a finite dimensional real vector space and $\Omega \subset V$ a convex open cone.
\end{definition}

Every symmetric space can be decomposed into an euclidian factor, simple compact factors and simple non-compact factors \cite[Prop. 4.2]{Helgason}.

\begin{definition}
  A symmetric space is of \emph{non-compact type} if it has no compact factors.
\end{definition}

\begin{example}
  The group $\Sp(2n,\R)$ is a Hermitian Lie group of tube type. Indeed, let $V=\Sym_n(\R)$  be the vector space of real symmetric matrices and $\Omega\subset V$ be the cone of symmetric positive definite matrices. Define
  \[
  \Dca:=\{ X\in V^\C|~ \|X\|<1\},
\]
where $\| \cdot \|$ is the spectral norm on $V^\C=\Sym_n(\C)$. Equipped with its Bergman metric $\Dca$ is a bounded symmetric domain \cite[Ch.X]{FK}. 

Note that $\Dca$ is \emph{centered}, i.e. it is invariant under multiplication with elements of $S^1 \subset \C$. It is possible to realize each bounded symmetric domain as a centered domain.

The domain $\Dca$ is biholomorphically equivalent to
\[
  T_\Omega:=V\oplus i\Omega =\{X\in \Sym_n(\C)| \im X \text{ positive definite }\},
\]
where explicit maps are given by the Cayley transform and its inverse:
\[
  c:\begin{cases}
     \Dca\rightarrow T_\Omega \\
     Z\mapsto i(I+Z)(I-Z)^{-1}
  \end{cases}
\]
and
\[
  p: \begin{cases} T_\Omega\rightarrow \Dca\\
 z\mapsto (Z-iI)(Z+iI)^{-1}.
  \end{cases}.
\]

The group $\Sp(2n,\R)$ acts on $T_\Omega$
via
\[
  \left(
    \begin{array}{cc}
   A & B\\ C&D 
  \end{array}
\right)X=(AX+B)(CX+D)^{-1}.
\]
\end{example}
Therefore $\Sp(2n,\R)$ is of tube type.

Recall that 
\begin{equation}\label{EqInverse}
  \left(
    \begin{array}{cc}
   A & B\\ C&D 
  \end{array}
\right)^{-1}=\left(
    \begin{array}{cc}
   D^\top & -B^\top\\ -C^\top &A^\top 
  \end{array}
\right)
\end{equation}

The bounded symmetric domain $\Dca$ associated with a Hermitian Lie group $G$ is a bounded subset of a complex vector space, hence we can define the topological compactification $\bar \Dca$.  Its boundary decomposes into $G$-orbits and there is a unique closed one. This is the \emph{Shilov boundary} $\check S$. For more details on the topological compactification and the Shiov boundary we refer to \cite{Wolf} and \cite{Clerc}.

\begin{example}\label{ExShilov}
  For $G=\Sp(2n,\R)$ the Shilov boundary is:
\[
  \check S=\{X\in \Sym_n(\C)| ~|\lambda |=1 \text{ for all eigenvalues }\lambda \text{ of }X\}.
\]
\end{example}

\begin{remark}\label{RemVopendense}
From this expression and the definition of the Cayley transform $c$ it is immediately clear that $c$ is not defined on whole $\check S$, but only on the set of $X\in \check S$ such that $X-I$ is invertible. In the view of Definition \ref{DefTransverse} below this the case if and only if $X\pitchfork I$. The Cayley transform maps this set surjectivly to $V$. In particular 
\[
  p(V)=\{ X\in \check S| X\pitchfork I  \} 
\]
is an open and dense subset of $\check S$.
\end{remark}
\begin{example}\label{ExShilovLagrangians}
The Shilov boundary for $\Sp(2n,\R)$ can be identified with the the space of Lagrangians  $L(\R^{2n})$  of a symplectic vector space $\R^{2n}$. Indeed the Lagrangian subspace spanned by the standard basis vectors $e_{n+1},\ldots, e_{2n}$ is stabilized by the subgroup 
 \[
   \left\{ \left(\begin{array}{cc} A &0\\ C &D\end{array} \right)\in \Sp(2n,\R)  \right\}
 \]
 and this is also the stabilizer of the point $0\in V$. In both cases $\Sp(2n,\R)$ acts transitively, hence one can identify $\check S$ and $L(\R^{2n})$ as homogeneous $\Sp(2n,\R)$-spaces.\\

\end{example}

The relation between all the objects introduced so far becomes more conceptual if one introduces \emph{Jordan algebras}. A \emph{Jordan algebra} is a commutative algebra $V$ which satisfies 
\[
  x^2(xy)=x(x^2 y),\qquad \forall x,y \in V.
\]
We refer the reader to \cite{FK} for an introduction to Jordan algebras as well as to \cite{Bertram} for the relation between bounded symmetric domains and Jordan algebras.

A \emph{Jordan frame} for the Jordan algebra $\Sym_n(\R)$ is a set of idempotents $\{c_i\}$ such that
\begin{itemize}
  \item $c_ic_j=0$ for all $i\neq j$,
  \item $\sum c_i=I$,
  \item no $c_i$ can be written as the sum of two idempotents.
\end{itemize}
The easiest Jordan frame for $V=\Sym_n(\R)$ is the set of matrices with one $1$ on the diagonal and zero elsewhere. All other Jordan frames are conjugate under $\On(n)$ to this one. Every element $ Z\in \Sym_n(\R)$ can be written as
\[
  Z=\lambda_1 c_1+\cdots +\lambda_nc_n,
\]
where the $\lambda_i$ are real numbers and $\{c_i\}$ a Jordan frame. The \emph{Jordan algebra determinant} is defined as
\[
  {\det}_V(Z)=\prod\lambda_i.
\]

\begin{definition}\label{DefTransverse}
  Two elements $X,Y$ of $V^\C=\Sym_n(\C)$ are \emph{transverse} (denoted by $X\pitchfork Y$) if $\det(X-Y)\neq 0$.
\end{definition}
For various characterizations of transversality in the Shilov boundary see \cite[Proposition 14]{HS09}. The Cayley transform preserves transversality (\cite[p.200 ff]{FK}).

\begin{definition}\label{DefMaslov}
   Fix a Jordan frame $\{c_i\}$ and define for $k\in \{0,\ldots,n\}$ 
  \[
    I_k:=\sum_{j=1}^k c_j-\sum_{j=k+1}^n c_j.
  \]Let $(Y_1,Y_2,Y_3)$ be a triple of pairwise transverse points in the Shilov boundary $\check S$. 
  Following \cite{CO2} we define the \emph{Maslov index} 
\[ \beta(Y_1,Y_2,Y_3)= 2k-n
\]
 if there exists $g\in G$ such that $g(Y_1,Y_2,Y_3)=(-e,-iI_k,e)$.
\end{definition}
For the Jordan frame for $V=\Sym_n(\R)$ defined above we have
\[
  I_k= \left(
    \begin{array}{cc}
   1_k & \\ & -1_{n-k}
  \end{array}
\right),
\]
where $1_k$ is the $k\times k$ unit matrix.
The Maslov index is skew-symmetric, $G$-invariant and it classifies $G$-orbits of triples of pairwise transverse points in the Shilov boundary. It can be defined for arbitrary triples in the Shilov boundary (see  \cite{CK07} and \cite{ClercMaslov2} and the references therein).

It takes values in $\{-n,\ldots, n\}$ and a triple $(Y_1,Y_2,Y_3)$ is \emph{maximal} if $\beta(Y_1,Y_2,Y_3)=n$.
\begin{example}\label{ExMaslovSp2n}
We will now express the Maslov index of a pairwise transverse triple $(Y_1,Y_2,Y_3)$ in the boundary $V=\Sym_n(\R)\subset \bar T_\Omega$.

Assume that there exists $g\in\Sp(2n,\R)$ such that $g(Y_1,Y_2,Y_3)=(-e,-i\varepsilon_k,e)$. The Cayley transform $c$ maps this $-e$ to $0\in V$ and $-iI_k$ to $I_k$. As mentioned above $c(I)$ is not defined. However we can add a point $\infty$ to $V$ whose stabilizer in $\Sp(2n,\R)$ is the stabilizer of $I\in \check S$. 

Let $Y_1=0$ and $Y_3=\infty$ and $Y_2$ be transverse to them. Since $Y_2$ is symmetric, there exists $M\in \GL(n,\R)$ and $k$ such that $Y_2=MI_k M^\top$. Observe that for 
\[
  g=\left(
    \begin{array}{cc}
  M^{-1}& \\ & M^\top
  \end{array}
\right)\in \Sp(2n,\R),
\]
we have $g(Y_1,Y_2,Y_3)=(0,I_k,\infty)$ and
\[
  \beta(Y_1,Y_2,Y_3)=\beta(0,I_k,\infty)=2k-n=\sgn I_k.
\]
\end{example}

 \begin{definition}\label{DefStructureGroup}
  Let $V$ be a Jordan algebra and $\tau(x,y):=\tr L(x\bar y)$. Then $V$ is \emph{semi-simple}, if $\tau$ is non-degenerate. The structure group $\Str(V)$ of  a semi-simple Jordan algebra $V$ is:
  \[
    \Str(V):=\{g\in \GL(V)|P(gx)=gP(x)g^*\},
  \]
  where $g^*$ is the adjoint of $g$ w.r.t. $\tau$. 
\end{definition}
Finally we introduce the character $\chi$ (cf. \cite[p.99]{CK07}):
\begin{definition}\label{DefChi}
  Let $V$ be a real or complex Jordan algebra and $\Str(V)$ be the structure group of $V$. Then we define the character $\chi$ on $\Str(V)$ via:
  \[
    {\det}_V(gx)=\chi(g){\det}_V(x),
  \]
  where $g\in \Str(V)$, $x\in V$ and $\det_V$ is the Jordan algebra determinant.
\end{definition}
\begin{lemma}
  Let $\Dca\subset V^\C$ be a circled bounded symmetric domain of tube type. Then $K$, the stabilizer of $0$ in the isometry group, is contained in the $\Str(V^\C)$
\end{lemma}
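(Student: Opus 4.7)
The plan is to combine H.~Cartan's linearity theorem for circled bounded domains with the identification of $K$ as the automorphism group of the Jordan structure on $V^\C$. First I would invoke Cartan's uniqueness theorem: any holomorphic automorphism of a bounded domain is determined by its $1$-jet at a fixed point, and for a circled bounded domain this $1$-jet realises the automorphism as a complex linear map. Applied to the isotropy at $0\in\Dca$ this yields an embedding $K\hookrightarrow\GL(V^\C)$, so from now on elements of $K$ are regarded as complex linear transformations of $V^\C$ preserving $\Dca$.

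Next I would show that each $g\in K$ preserves the Jordan triple product on $V^\C$. This can be read off from the Bergman data: on a circled domain the Bergman kernel at the origin recovers (a scalar multiple of) the trace form $\tau$, and its higher jets along the diagonal encode the Jordan triple product $\{x,y,z\}$ in the Koecher--Loos presentation. Since $g\in K$ preserves the Bergman metric and acts linearly, it preserves all polarised invariants built from it; in particular $g\{x,y,z\}=\{gx,gy,gz\}$ for all $x,y,z\in V^\C$, and $\tau(gx,gy)=\tau(x,y)$, so the adjoint $g^{*}$ of $g$ with respect to $\tau$ coincides with $g^{-1}$.

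In the tube-type case $V^\C$ carries a Jordan algebra structure whose quadratic representation $P$ is obtained from the triple product by $P(x)y=\{x,\bar y,x\}$, where the bar refers to the conjugation associated with the real form $V\subset V^\C$. Substituting the $K$-invariance identity of the triple product at $z=x$ and using the defining property of $g^{*}$ gives
\[
P(gx)\,y \;=\; \{gx,\bar y,gx\} \;=\; g\bigl\{x,\overline{g^{*}y},x\bigr\} \;=\; g\,P(x)\,g^{*}\,y,
\]
which is precisely the defining relation of $\Str(V^\C)$. Hence $K\subset\Str(V^\C)$, as claimed.

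The main obstacle I expect is the careful translation between the intrinsic Jordan triple product on the bounded symmetric domain and the quadratic representation $P$ of the Jordan algebra, which is available only because of the tube-type hypothesis, together with fixing normalisations so that the $\tau$-adjoint used in the definition of $\Str(V^\C)$ agrees with the Bergman-metric adjoint at $0$; once these identifications are pinned down, the chain of equalities above is formal.
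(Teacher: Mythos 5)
Your proof is correct in substance but takes a genuinely different route from the paper's after the shared first step. Both arguments begin with Cartan's linearity theorem for circled bounded domains (the paper invokes it as Lemma A.2 of \cite{HS09}) to realize $K$ as a group of complex linear transformations of $V^\C$. The paper then finishes in one line: a linear isometry preserves the Shilov boundary, and Proposition X.3.1 of \cite{FK} says such a linear map lies in $\Str(V^\C)$. You instead go through the Jordan triple product: linear Bergman isometries fixing $0$ are triple automorphisms, and the structure-group identity $P(gx)=gP(x)g^{*}$ follows by specializing $z=x$. This makes the defining relation of $\Str(V^\C)$ emerge from an explicit computation rather than a citation, at the price of importing the Koecher--Loos identification of the triple product with the Bergman curvature at $0$ --- an external input of comparable weight to the proposition the paper cites. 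The one step you should tighten is the identification of $g^{*}$: from $g\{x,y,z\}=\{gx,gy,gz\}$ one gets $P(gx)y=\{gx,\bar y,gx\}=g\,P(x)\,\overline{g^{-1}\bar y}$, so the operator required in the structure-group identity is $y\mapsto\overline{g^{-1}\bar y}$, the transpose of $g$ with respect to the \emph{bilinear} trace form, and not the inverse $g^{-1}$ that unitarity with respect to the sesquilinear form $\tau(x,y)=\tr L(x\bar y)$ would give; the two agree only when $g$ preserves the real form $V$ (for $\Sp(2n,\R)$, $k\in\mathrm{U}(n)$ acts on $\Sym_n(\C)$ by $Z\mapsto kZk^{\top}$ and the correct adjoint is $Z\mapsto k^{\top}Zk$, not $Z\mapsto k^{*}Z\bar k$). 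This is precisely the normalization you flag at the end; once the bilinear convention for the adjoint in the definition of $\Str(V^\C)$ is fixed, your chain of equalities closes and the argument is complete.
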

\begin{proof}
  From Lemma A.2 \cite{HS09}  follows that every $k\in K$ acts linearly on $V^\C$. Furthermore $k$ is an isometry, hence it preserves the Shilov boundary. Therefore, by Proposition X.3.1 in \cite{FK}, it is contained in $\Str(V^\C)$.
\end{proof}

\section[Formula]{A Formula for the Toledo Invariant}\label{SecFormula}
In this section we prove Theorem \ref{ThmFormulaIntro}. The proof is based on the expression of the Toledo invariant (as defined in the introduction) as the sum of generalized rotation numbers in \cite[Theorem 13]{Surface}. We will write the rotation number in terms of the Souriau index, which provides a link between this rotation number and the Maslov index. Throughout this section we denote by $G$ a Hermitian Lie group of tube type and by $\tilde G$ its universal cover.

\subsection{The Toledo Invariant}\label{SecToledo}
  
Let $g$ be the metric of $\Dca$ scaled such that the minimal sectional curvature is $-1$ and $J$ an invariant complex structure on $\Dca$, then 
\[
  \omega(X,Y):=g(JX,Y)
\]  
is a K\"ahler form on $\Dca$ (\cite[Lemma 2.1]{Anosov}). For $z_1,z_2,z_3\in \Dca$ we denote 
by $T(z_1,z_2,z_3)$ the geodesic triangle with vertices $z_1$, $z_2$ and $z_3$. Then
\[
  c(z_1,z_2,z_3):=\int_{T(z_1,z_2,z_3)}\omega.
\]
Now choose a base point $o\in \Dca$ and define for $g_1,g_2,g_3\in \Sp(2n,\R)$
\[
  c_G(g_1,g_2,g_3):=\frac{1}{2\pi} c(g_1o,g_2o,g_3o).
\]
This is a $G$-invariant homogeneous cocycle which is bounded (\cite{CO}).

 \begin{definition}\label{DefKaehler}
The cocycle $\c_G$ defines cohomology classes $\kappa_G\in \h^2_c(G,\R)$ resp. $\kappa_G^b\in \h^2_{cb}(G,\R)$, the \emph{K\"ahler class} resp. the \emph{bounded K\"ahler class}.
\end{definition}

Another cocycle for $G$ can be defined using the Maslov index. Let $b\in \check S$ be an arbitrary base point. Then
\[
  c_{\beta,b}(g_1,g_2,g_3):=\beta(g_1b,g_2b,g_3b).
\]
Note that $c_{\beta,b}$ and $c_{\beta,b'}$ are maybe not equal, but they are cohomologous (\cite[Prop 4.3]{Herm}).

Again we have:
 \begin{definition}\label{DefMaslovCocyle}
The cocycle $c_{\beta,b}$ defines cohomology classes $\kappa_\beta\in \h^2_c(G,\Z)$ resp. $\kappa_\beta^b\in \h^2_{cb}(G,\Z)$.
\end{definition}

\begin{proposition}(\cite[Prop.4.3]{Herm})\label{PropMaslovKaehler}
  \[
    \kappa_\beta= 2 \kappa_G \text{ and }\kappa^b_\beta= 2 \kappa^b_G
  \]
\end{proposition}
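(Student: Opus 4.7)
The plan is to reduce the equality of classes to the computation of a single real constant, then evaluate that constant by letting triples degenerate to the Shilov boundary. Both $c_G$ and $c_{\beta, b}$ are continuous, $G$-invariant and bounded on $G^3$, so they define classes in $\h^2_{cb}(G, \R)$. For a simple Hermitian Lie group of tube type this space is one-dimensional, and for a product of such factors one may argue factor by factor; hence $\kappa^b_\beta = \lambda\, \kappa^b_G$ for some $\lambda \in \R$. The analogous identity in $\h^2_c(G, \R)$ then follows by applying the comparison map $\h^2_{cb}(G,\R) \to \h^2_c(G,\R)$. Thus I only need to pin down $\lambda$.

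To compute $\lambda$, I would let $g_i o$ tend to pairwise transverse points $y_i \in \check S$ and establish the limiting identity
\[
\lim_{z_i \to y_i} \int_{T(z_1, z_2, z_3)} \omega \;=\; \pi\, \beta(y_1, y_2, y_3).
\]
Together with the definition of $c_G$ this gives $c_G \to \tfrac12\, c_{\beta, b}$ on such limiting triples, forcing $\lambda = 2$. The identity I would verify via the polydisk realization: a Jordan frame $\{c_i\}_{i=1}^n$ produces a totally geodesic, holomorphic, isometric embedding $\D^n \hookrightarrow \Dca$, and the excerpt's normalization (minimal sectional curvature $-1$) is precisely what makes each $\D$-factor carry its standard curvature $-1$ hyperbolic metric; under this embedding $\omega$ restricts to the sum of the Poincar\'e forms on the factors, and $(S^1)^n$ embeds in $\check S$. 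By the classification of pairwise transverse triples built into Definition \ref{DefMaslov}, any such triple is $G$-conjugate to one lying in $(S^1)^n$, so the integral decomposes into $n$ integrals over ideal hyperbolic triangles in $\D$. Each of these contributes $\pm \pi$ according to the cyclic orientation of the three boundary points on $S^1$, and the sum of signs is exactly the Maslov index $\beta(y_1,y_2,y_3)$, so the integrals add up to $\pi\,\beta(y_1,y_2,y_3)$.

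The main obstacle will be the limiting argument: the triangle $T(z_1,z_2,z_3)$ has vertices escaping to the boundary of $\Dca$, so one has to justify that $\int_T \omega$ remains finite and passes to a well-defined limit, and that this limit actually pins down the cohomology class. I would handle this by reducing via the polydisk to the classical $\SL(2,\R)$ case, where the finiteness and value of the integral over an ideal triangle (area $\pi$) are standard, and then arguing that since the set of pairwise transverse triples is open and dense in $\check S^3$ (Remark \ref{RemVopendense}), pointwise convergence of the boundary cocycle values on this set is enough to identify the two classes in $\h^2_{cb}(G,\R)$. With $\lambda = 2$ established on the bounded level, the conclusion $\kappa_\beta = 2 \kappa_G$ follows by naturality of the comparison map.
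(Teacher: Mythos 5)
The paper does not actually prove this proposition: it is imported wholesale from \cite[Prop.~4.3]{Herm}, so there is no internal argument to compare yours against. What you have written is, in outline, exactly the standard proof given there and in \cite{CO}: (a) for $G$ simple Hermitian, $\h^2_{cb}(G,\R)\cong\h^2_c(G,\R)\cong\R$ and the comparison map is an isomorphism in degree two, so $\kappa^b_\beta=\lambda\kappa^b_G$ for a scalar $\lambda$ and the unbounded statement follows; (b) the scalar is computed from the Clerc--{\O}rsted limit formula $\lim_{z_i\to y_i}\int_{T(z_1,z_2,z_3)}\omega=\pi\,\beta(y_1,y_2,y_3)$ for pairwise transverse $y_i\in\check S$, which with the $\frac{1}{2\pi}$ in the definition of $c_G$ gives $\lambda=2$; and (c) the limit formula itself is verified on a maximal polydisk $\D^n\hookrightarrow\Dca$, where the normal form $(-e,-iI_k,e)$ of Definition~\ref{DefMaslov} reduces everything to $n$ ideal triangles of signed area $\pm\pi$ in curvature $-1$ disks, with sign count $2k-n=\beta$. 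All of this is correct, including the normalization: the minimal sectional curvature of a Hermitian symmetric space of noncompact type is attained on holomorphic planes tangent to the disk factors of a maximal polydisk, so the scaling in Section~\ref{SecToledo} does make each factor a standard Poincar\'e disk.

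The one step you rightly flag as the obstacle is also the one that needs more than "pointwise convergence on a dense set of triples." Two cohomologous bounded cocycles on $G^3$ can have different boundary limits a priori, so convergence of values does not by itself identify classes. The rigorous version is the Burger--Monod boundary resolution: $\h^2_{cb}(G,\R)$ is computed by $G$-invariant alternating $L^\infty$ cocycles on the (Furstenberg or Shilov) boundary, and in that model there are \emph{no} coboundaries in degree two (invariant alternating functions on pairs vanish by ergodicity of $G$ on pairs of transverse points). Hence the boundary extension of $c_G$ is the unique boundary representative of $\kappa^b_G$, and comparing it with $\frac{1}{2}c_{\beta,b}$ as functions on transverse triples legitimately pins down $\lambda=2$. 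With that substitution for your density argument, the proof is complete and coincides with the one the paper cites.
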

Both classes can be used to define the Toledo invariant as in the introduction. 

For the proof of Formula \ref{Formula} the following theorem is crucial. Burger, Iozzi and Wienhard define in \cite[Ch.7]{Surface} a generalized rotation number $\Rot_\kappa:G\rightarrow \R/\Z$ for every cohomology class $\kappa\in \h^2(G,\Z)$. It admits a unique lift $\Rott_\kappa:\tilde G\rightarrow \R$ with $\Rott_\kappa(e)=0$. 

\begin{theorem}\cite[Thm. 8.2]{Surface}\label{ThmToledo}
  Let $m\geq 1$ and $\varrho:\Gamma_{g,m}\rightarrow G$ be a maximal representation. Then
  \begin{equation}\label{ToledoRot}
    T_\varrho:=-\sum_{i=1}^m \Rott_{\kappa_G^b}(\tilde \varrho(C_i)),
  \end{equation}
  where $\tilde \varrho$ is some lift of $\varrho$ to $\tilde G$.
\end{theorem}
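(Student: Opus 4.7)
The plan is to use the bounded-cohomological description of the Toledo invariant, exploit amenability of $\partial\Sigma$ to reduce $\langle\varrho^*\kappa_G^b,[\Sigma,\partial\Sigma]\rangle$ to boundary contributions, and identify each boundary contribution with the rotation number by its defining property.

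First I would invoke the long exact sequence of the pair $(\Sigma,\partial\Sigma)$ in bounded cohomology. Each component of $\partial\Sigma$ is a circle with amenable fundamental group, so $\h^j_b(\partial\Sigma;\R)=0$ for $j\geq 1$. The sequence then gives an isomorphism $\h^2_b(\Sigma,\partial\Sigma;\R)\cong \h^2_b(\Sigma;\R)$, and consequently the class $\varrho^*\kappa_G^b$ admits a unique preimage in $\h^2_b(\Sigma,\partial\Sigma;\R)$ whose pairing with the relative fundamental class $[\Sigma,\partial\Sigma]$ equals $T_\varrho$.

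Next I would exhibit the natural local primitive on each boundary loop. Fix a bounded continuous cocycle $c\in Z^2_{cb}(G;\R)$ representing $\kappa_G^b$. Its pullback to the universal cover $\tilde G$ is the coboundary of a homogeneous quasimorphism (by the standard Barge--Ghys / Matsumoto--Morita description of bounded classes on a universal cover), and this quasimorphism is, up to the sign convention, the continuous lift $\Rott_{\kappa_G^b}$ singled out by the normalization $\Rott_{\kappa_G^b}(e)=0$. Pulling back along a lift $\tilde\varrho$ then trivializes $\varrho^*c$ on each cyclic subgroup $\langle C_i\rangle$, the value on the generator being $-\Rott_{\kappa_G^b}(\tilde\varrho(C_i))$.

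Then I would compute the relative pairing concretely using a cellular model. Represent $\Sigma$ by a single $2$-cell whose attaching word spells out the standard relation $[A_g,B_g]\cdots[A_1,B_1]C_m\cdots C_1$, with the loops $C_i$ retained as the boundary. Evaluating the relative cocycle on this cell and applying the trivialization from the previous step on the boundary segments, the contribution splits into interior commutator pieces and boundary pieces. The commutator pieces cancel by a direct cocycle computation over each $[A_i,B_i]$, while the boundary pieces telescope to $-\sum_{i=1}^m \Rott_{\kappa_G^b}(\tilde\varrho(C_i))$, giving the claimed formula.

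The main technical obstacle is controlling the commutator contributions, since a homogeneous quasimorphism is not a homomorphism and hence need not vanish on commutators. The cleanest route is to recast the entire pairing as the Euler number of the pullback of the central extension $1\to\pi_1(G)\to\tilde G\to G\to 1$ to $\pi_1(\Sigma)$, which is computed as the central element produced when lifting the defining relation to $\tilde G$; a Milnor--Wood type accounting then expresses this Euler number as the signed sum of rotation numbers of the boundary generators with the normalization $\Rott_{\kappa_G^b}(e)=0$, which is precisely the claimed formula.
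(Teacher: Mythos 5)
First, a point of reference: the paper does not prove this statement. It is imported verbatim from Burger--Iozzi--Wienhard \cite[Thm.~8.2]{Surface}, so there is no internal proof to compare against; your proposal has to be judged as a reconstruction of their argument. Its architecture --- relative bounded cohomology, $\h^2_b(\Sigma,\partial\Sigma;\R)\cong\h^2_b(\Sigma;\R)$ via amenability of the boundary circles, and the homogeneous quasimorphism $\Rott_{\kappa_G^b}$ on $\tilde G$ as a primitive (modulo bounded coboundaries) of the pulled-back cocycle --- is the right one and matches the cited source in its first two steps.

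The gap is in your treatment of the interior terms. You first assert that ``the commutator pieces cancel by a direct cocycle computation over each $[A_i,B_i]$'', then correctly observe that a homogeneous quasimorphism need not vanish on commutators --- indeed for a closed surface the entire Toledo invariant equals $-\Rott_{\kappa_G^b}\bigl(\prod_i[\varrho(A_i),\varrho(B_i)]^\sim\bigr)$ (Theorem \ref{ThmRott2}), which is nonzero for maximal representations, so no pointwise cancellation over the handles can be the mechanism --- and then retreat to an ``Euler number of the pulled-back central extension''. That last move is circular when $m\geq 1$: $\Gamma_{g,m}$ is free, the pulled-back extension splits, and the central element obtained by lifting the relator depends on the chosen lifts of the $C_i$ (it is trivial for a homomorphism lift), so there is no intrinsic Euler number, and the ``Milnor--Wood type accounting'' you invoke to turn it into $-\sum_i\Rott_{\kappa_G^b}(\tilde\varrho(C_i))$ is precisely the statement to be proved. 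The correct mechanism is Stokes, not cancellation: set $F:=\Rott_{\kappa_G^b}\circ\tilde\varrho$ for a homomorphism lift $\tilde\varrho$ (which exists since $\Gamma_{g,m}$ is free); then $\varrho^*\kappa_G^b=[-\delta F]$ in $\h^2_b(\Gamma_{g,m},\R)$ up to the coboundary of a bounded function, and since $F$ restricts to a genuine homomorphism on each cyclic subgroup $\langle C_i\rangle$ by homogeneity, the pair $(-\delta F,0)$ already represents the corresponding relative class. Pairing with a relative fundamental cycle $z$ gives $\langle-\delta F,z\rangle=-F(\partial z)$, and $\partial z$ lies entirely in the boundary subgroups, each generator appearing once; no value of $F$ on any $A_i$, $B_i$ or commutator ever enters the computation. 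You should also record why the right-hand side of \eqref{ToledoRot} is independent of the homomorphism lift: changing the lifts multiplies the $\tilde\varrho(C_i)$ by central elements whose product is trivial, and $\Rott_{\kappa_G^b}$ restricts to a homomorphism on the center $\pi_1(G)\subset\tilde G$.
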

For the sake of completeness we state the analogous theorem for closed surfaces. The commutator map $G\times G\rightarrow \tilde G$ is defined as
\[
  [g,h]^\sim:=[\tilde g,\tilde h],
\]
where $\tilde g$ and $\tilde h$ are arbitrary lifts. 
\begin{theorem}\label{ThmRott2}
Let $\kappa\in H^2_{c}(G,\Z)$ and $\varrho:\Gamma_g\rightarrow G$ be a representation. Then
 \[
   T_{\varrho}=-\Rott_{\kappa_G^b} \left(\prod_{j=1}^g [\varrho(A_i),\varrho(B_i)]^\sim\right).
 \]
\end{theorem}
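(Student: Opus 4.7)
The plan is to reduce Theorem \ref{ThmRott2} to the bordered version Theorem \ref{ThmToledo} via a capping-off argument. Realize $\Sigma_g$ as obtained from $\Sigma_{g,1}$ by gluing a disk along its single boundary component. On fundamental groups this is a quotient $\pi:\Gamma_{g,1}\twoheadrightarrow\Gamma_g$ imposing the additional closed-surface relation $[A_g,B_g]\cdots[A_1,B_1]=e$. Given a representation $\varrho:\Gamma_g\to G$, define $\varrho':=\varrho\circ\pi:\Gamma_{g,1}\to G$. Since $\Gamma_{g,1}$ is free on the $A_j,B_j$ (with $C_1=\bigl([A_g,B_g]\cdots[A_1,B_1]\bigr)^{-1}$), the homomorphism $\varrho'$ is given on generators by the same formulas as $\varrho$, and the boundary loop satisfies $\varrho'(C_1)=e$.

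Applying Theorem \ref{ThmToledo} to $\varrho'$ then yields
\[
   T_{\varrho'}=-\Rott_{\kappa_G^b}\bigl(\tilde{\varrho'}(C_1)\bigr).
\]
A lift $\tilde{\varrho'}:\Gamma_{g,1}\to\tilde G$ amounts to choosing lifts of the free generators $A_j,B_j$, and the surface relation then forces $\tilde{\varrho'}(C_1)=\bigl(\prod_{j=g}^{1}[\varrho(A_j),\varrho(B_j)]^\sim\bigr)^{-1}$. The notation $[\varrho(A_j),\varrho(B_j)]^\sim$ is unambiguous because the commutator of two lifts in $\tilde G$ depends only on their projections to $G$, and the total product lies in the central kernel $\pi_1(G)\subset\tilde G$ since it maps to $e\in G$. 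Because $\pi_1(G)$ is abelian, the homogeneous quasi-morphism $\Rott_{\kappa_G^b}$ restricts there to a homomorphism; hence inverting the argument simply flips the sign, delivering the formula claimed in Theorem \ref{ThmRott2}.

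The last step is to identify $T_\varrho$ with $T_{\varrho'}$. This is a naturality statement: we have $(\varrho')^*\kappa_G^b=\pi^*\varrho^*\kappa_G^b$, and the capping map $(\Sigma_{g,1},\partial\Sigma_{g,1})\to\Sigma_g$ sends the relative fundamental class $[\Sigma_{g,1},\partial\Sigma_{g,1}]$ to the closed-surface class $[\Sigma_g]$ (by excision together with contractibility of the glued disk). Pairing against the bounded K\"ahler class therefore produces equal values.

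The main obstacle is the careful reconciliation of orderings and signs. The statement of the theorem uses the product $\prod_{j=1}^g$, whereas the surface relation dictates the reverse order $[A_g,B_g]\cdots[A_1,B_1]$; since different orderings of a product in $\tilde G$ need not yield the same element even modulo commutators, one must verify that the resulting two central elements have the same rotation number. This amounts to showing that the cyclic and permutation corrections differ by elements whose rotation numbers vanish, which follows from the homogeneity and additivity of $\Rott_{\kappa_G^b}$ on the abelian central kernel together with the compatibility of orientations between $\Sigma_{g,1}$ (with its relative fundamental class) and $\Sigma_g$ under capping.
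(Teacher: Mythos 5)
The paper offers no proof of this theorem at all: it is quoted directly from \cite{Surface} (``This is Theorem 8.3 in \cite{Surface}''), so there is no internal argument to measure yours against. Your capping strategy --- pull $\varrho$ back to the free group $\Gamma_{g,1}$, apply the bordered formula to the single boundary word, and match the two Toledo invariants by naturality --- is a sensible way to relate the two quoted results, but as written it has two genuine gaps.

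First, Theorem \ref{ThmToledo} is stated in this paper only for \emph{maximal} representations, whereas Theorem \ref{ThmRott2} concerns an arbitrary $\varrho:\Gamma_g\to G$. Your $\varrho'=\varrho\circ\pi$ is maximal exactly when $\varrho$ is, so for non-maximal $\varrho$ you are invoking the bordered rotation-number formula outside its stated hypotheses; since the validity of that formula for arbitrary representations is essentially the entire content of what is to be proved, the reduction is empty unless you import the stronger version from \cite{Surface} itself (where the bordered statement does hold for all homomorphisms, with the lift of $C_1$ pinned down as you describe). Second, the order-of-product bookkeeping does not close. Writing $x_j:=[\varrho(A_j),\varrho(B_j)]^\sim$, the relation $[A_g,B_g]\cdots[A_1,B_1]C_1=e$ forces $\tilde{\varrho'}(C_1)=(x_g\cdots x_1)^{-1}=\prod_{j=1}^{g}[\varrho(B_j),\varrho(A_j)]^\sim$, so what your computation actually produces is $T_{\varrho'}=+\Rott_{\kappa_G^b}(x_g\cdots x_1)$. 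To convert this into the stated $-\Rott_{\kappa_G^b}\bigl(\prod_{j=1}^{g}[\varrho(A_j),\varrho(B_j)]^\sim\bigr)$ you claim the reordering costs only ``cyclic and permutation corrections'' with vanishing rotation number. That is false in general: for $g\ge 3$ the increasing-order product $x_1\cdots x_g$ need not lie in the central kernel at all, because its image in $G$ is the product of the commutators in increasing order, which differs from the relator (equal to $e$) by nontrivial commutators; so there are not ``two central elements'' to compare, and additivity of $\Rott_{\kappa_G^b}$ on the center is unavailable. The only clean resolution is to fix the convention that the product in the statement is taken in relator order (or its inverse) and then track the resulting sign via Lemma \ref{homquasim}; your argument does neither, and the sign you actually obtain is not visibly the one claimed.
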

This is Theorem 8.3 in \cite{Surface}.

In the following Theorem we collect some properties of the Toledo invariant used later in the text. For more details see \cite{Surface}.
\begin{theorem}\label{PropPropToledo} (\cite[Thm 1, Prop. 3.2]{Surface})
  \begin{enumerate}
    \item $|T_\varrho|\leq |\chi(\Sigma)|\rk \Dca$ (Milnor-Wood inequality)
    \item $T_\bullet$ is continuous 
    \item If $\partial \Sigma=\emptyset$, then the image of $T_\bullet$ is finite
    \item If $\partial \Sigma\neq \emptyset$, then $T_\bullet$ is surjective on the interval 
    \[
      \big[- |\chi(\Sigma)|\Dca, |\chi(\Sigma)|\rk \Dca  \big]
    \]
    \item Let $\Sigma$ be a surface divided by a separating loop $l$ into two subsurfaces $\Sigma_1$ and $\Sigma_2$. Denote by $\varrho_1$ and $\varrho_2$ the restrictions of $\varrho$ to $\Sigma_1$ resp. $\Sigma_2$. Then 
    \[
      T_\varrho=T_{\varrho_1}+T_{\varrho_2}
    \] 
   \item  Let $\Sigma'$ be a surface obtained by cutting a surface $\Sigma$ along a non-separating loop. Let $i:\Sigma'\rightarrow \Sigma$ be the canonical map. Then
    \[
      T_{i^*\varrho}=T_\varrho.
    \]
  \end{enumerate}
\end{theorem}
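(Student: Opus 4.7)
My plan is to handle the six items using two complementary tools already in place: the bounded-cohomological definition $T_\varrho=\langle\varrho^*\kappa_G^b,[\Sigma,\partial\Sigma]\rangle$, and the rotation-number/commutator formulas of Theorems \ref{ThmToledo} and \ref{ThmRott2}. Items (i) and (ii) flow from $\ell^\infty$--$\ell^1$ duality plus the continuous cocycle representative $c_G$ of $\kappa_G^b$; (iii) and (iv) use integrality of the Maslov class together with the freedom enjoyed by free fundamental groups; (v) and (vi) are naturality statements that reduce to relative Mayer--Vietoris and degree considerations.

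For (i), the duality estimate $|\langle\alpha,c\rangle|\le\|\alpha\|_\infty\|c\|_1$ reduces Milnor--Wood to two norm computations: $\|\kappa_G^b\|=\tfrac{1}{2}\rk\Dca$, which follows via Proposition \ref{PropMaslovKaehler} from the fact that the Maslov index is bounded by $n=\rk\Dca$ on arbitrary triples in $\check S$, and $\|[\Sigma,\partial\Sigma]\|_1=2|\chi(\Sigma)|$, the classical relative simplicial-volume computation for surfaces. For (ii), I would represent $\varrho^*\kappa_G^b$ by the bounded continuous cocycle $c_G(\varrho(g_1),\varrho(g_2),\varrho(g_3))$ and pair it with a fixed bounded relative fundamental chain; since this cocycle depends continuously on $\varrho$ in the topology of pointwise convergence, continuity of $T_\bullet$ is immediate.

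For (iii), Proposition \ref{PropMaslovKaehler} together with the integrality of $\kappa_\beta$ in $\Hop^2_c(G,\Z)$ implies $2T_\varrho\in\Z$ on closed surfaces (via Theorem \ref{ThmRott2} and the fact that the lifted rotation number of a product of commutators takes integer values modulo continuous deformation); combined with (i) and (ii), a discrete bounded subset of $\R$ is finite. For (iv), exploit that $\Gamma_{g,m}$ with $m\ge 1$ is free of rank $2g+m-1$, so one may prescribe the images of $A_i,B_i,C_1,\ldots,C_{m-1}$ freely and solve the single boundary relation for $C_m$; Theorem \ref{ThmToledo} then expresses $T_\varrho=-\sum\Rott_{\kappa_G^b}(\tilde\varrho(C_j))$, and deforming a single boundary generator along a maximal polydisc $\SL(2,\R)^n\hookrightarrow\Sp(2n,\R)$ realizes both endpoints of the allowed interval, so the intermediate value theorem together with (ii) fills in the rest.

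For (v), relative Mayer--Vietoris applied to $\Sigma=\Sigma_1\cup_l\Sigma_2$ along a separating loop shows that $[\Sigma,\partial\Sigma]$ is the sum of the images of $[\Sigma_i,\partial\Sigma_i]$ under the inclusions, and naturality of pullback turns this into $T_\varrho=T_{\varrho_1}+T_{\varrho_2}$. For (vi), the cutting map $i\colon\Sigma'\to\Sigma$ is a degree-one map that sends $[\Sigma',\partial\Sigma']$ to $[\Sigma,\partial\Sigma]$ in relative $\Hop_2$, so naturality gives $T_{i^*\varrho}=\langle i^*\varrho^*\kappa_G^b,[\Sigma',\partial\Sigma']\rangle=\langle\varrho^*\kappa_G^b,i_*[\Sigma',\partial\Sigma']\rangle=T_\varrho$. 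The main technical obstacle is the sharpness half of the Gromov-norm computation underlying (i): the upper bound $\|\kappa_G^b\|\le\tfrac{1}{2}\rk\Dca$ is immediate from the range of the Maslov index, but matching it from below requires exhibiting bounded $2$-chains on which $c_G$ nearly attains its supremum, typically via explicit boundary straightening adapted to a maximal polydisc. Once (i) and (ii) are in hand, items (iii)--(vi) follow essentially formally from the cocycle formalism and standard surface topology.
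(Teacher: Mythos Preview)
The paper does not give its own proof of this theorem: it is stated with the citation ``(\cite[Thm 1, Prop. 3.2]{Surface})'' and no proof environment follows. It is imported wholesale from Burger--Iozzi--Wienhard as background, so there is nothing in the paper to compare your attempt against.

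That said, your sketch is a fair outline of the bounded-cohomological approach actually used in \cite{Surface}. A few places deserve more care. For (iii) your claim that $2T_\varrho\in\Z$ ``via Theorem \ref{ThmRott2} and the fact that the lifted rotation number of a product of commutators takes integer values modulo continuous deformation'' is imprecise: what one actually uses is that for closed $\Sigma$ the class $\varrho^*\kappa_\beta$ lives in $\Hop^2(\Sigma,\Z)$ and pairs integrally with $[\Sigma]$, so $2T_\varrho=\langle\varrho^*\kappa_\beta,[\Sigma]\rangle\in\Z$; invoking $\Rott$ is unnecessary and the phrase ``modulo continuous deformation'' has no content here. For (iv) your argument only exhibits the two endpoints $\pm|\chi(\Sigma)|\rk\Dca$ via a polydisc and then appeals to (ii) and the intermediate value theorem; this is fine, but you should make explicit a continuous one-parameter family of representations interpolating between them (a rotation in one $\SL(2,\R)$-factor of the polydisc does the job). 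For (vi) the map $i\colon\Sigma'\to\Sigma$ is not literally degree one as a map of pairs in the obvious sense---the boundary of $\Sigma'$ contains two new circles mapping to the same loop in the interior of $\Sigma$---so the correct statement is that $i_*[\Sigma',\partial\Sigma']=[\Sigma,\partial\Sigma]$ under the appropriate relative homology map, which is what you use anyway; just phrase it more carefully.
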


In the introduction we defined maximal representations using the Toledo invariant. An equivalent characterization of maximal representations is \cite[Thm. 8]{Surface}: 
\begin{theorem}\label{ThmLimitCurve}
Fix $h$ a hyperbolization for $\Sigma$. 
A representation $\varrho:\Gamma \rightarrow G$ is maximal if and only if there exists a limit curve $\varphi: S^1\rightarrow \check S$ which is left continuous, $\varrho$-equivariant and which maps maximal triples in $S^1$ to maximal triples in $\check S$.
\end{theorem}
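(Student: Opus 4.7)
I would treat the two directions of the biconditional separately; the sufficiency is short, the necessity requires genuine bounded-cohomology machinery.

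\textbf{Sufficiency (limit curve $\Rightarrow$ maximal).} Assuming such a $\varphi$ exists, I would compute the Toledo invariant via the Maslov cocycle. By Proposition~\ref{PropMaslovKaehler} the bounded K\"ahler class equals $\tfrac12$ the class of the Maslov cocycle $c_{\beta,b}$, so for a suitable base point $b \in \check S$ and a chosen triangulation of $\Sigma$ by $h$-ideal triangles, the pairing of $\varrho^{*}\kappa_{G}^{b}$ with the relative fundamental class can be written as a sum of terms $\tfrac12\beta(\varphi(x_{1}),\varphi(x_{2}),\varphi(x_{3}))$ over ideal triangles with vertices $x_{i} \in S^{1}$. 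Each such triangle is a maximal triple in $S^{1}$, so the hypothesis forces every Maslov contribution to equal $n = \rk \Dca$. Summing and using the Gauss--Bonnet count of ideal triangles yields $T_{\varrho} = |\chi(\Sigma)|\cdot n$, proving maximality.

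\textbf{Necessity (maximal $\Rightarrow$ limit curve).} Here I would proceed in three steps. First, produce a measurable $\Gamma$-equivariant map $\varphi : S^{1} \to \check S$: the hyperbolization $h$ makes $S^{1}$ a doubly ergodic Poisson boundary for $\Gamma$, and the stabilizer in $G$ of a point of $\check S$ is amenable (Hermitian tube type), so the Burger--Monod functorial construction of boundary maps in bounded cohomology supplies $\varphi$. Second, use maximality to force $\varphi$ to send almost every maximal triple in $S^{1}$ to a maximal triple in $\check S$. Concretely, both $\varphi^{*}c_{\beta}$ and the standard cocycle represent $2\varrho^{*}\kappa_{G}^{b}$ in $H^{2}_{b}(\Gamma,\R)$, and Theorem~\ref{ThmToledo} together with the Milnor--Wood equality makes the essential supremum of $\varphi^{*}c_{\beta}$ on maximal triples equal to its pointwise maximum $n$; hence $\beta(\varphi(x_{1}),\varphi(x_{2}),\varphi(x_{3})) = n$ on a conull set of maximal triples.

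\textbf{Main obstacle.} The third step --- upgrading $\varphi$ from a measurable a.e.-defined map to a left continuous, everywhere defined one --- is where the work lies. The key observation is that preservation of maximal triples imposes a rigid monotonicity: the restriction of $\varphi$ to a conull subset respects the cyclic orders on $S^{1}$ and on the space of pairwise transverse points of $\check S$ (the latter inherits a cyclic order from the Maslov index via Example~\ref{ExMaslovSp2n}). This monotonicity guarantees that one-sided limits exist at every point; replacing $\varphi$ by its left limit $\varphi_{-}(x) := \lim_{y \nearrow x} \varphi(y)$ yields a left continuous representative, and $\varrho$-equivariance together with preservation of maximal triples passes to the limit because $\beta$ is continuous on the open set of pairwise transverse triples. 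Handling carefully the measure-zero exceptional set on which the monotone extension could a priori disagree with the original $\varphi$ is the delicate point; all other properties follow by limit-passage arguments analogous to those used for monotone boundary maps in rank-one settings.
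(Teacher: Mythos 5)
First, a point of comparison: the paper does not prove this statement at all --- it is quoted as Theorem 8 of \cite{Surface} (Burger--Iozzi--Wienhard) and used as a black box, so there is no internal proof to measure you against. Your proposal is therefore judged against the argument in \cite{Surface}, whose overall architecture (a measurable boundary map from amenability, a formula expressing $T_\varrho$ through the Maslov cocycle evaluated along that map, maximality forcing almost-everywhere monotonicity, and an upgrade from measurable to left continuous) you have reconstructed correctly in outline.

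As a proof, however, the proposal has concrete gaps. The justification you give for the existence of the measurable equivariant map is wrong: the stabilizer in $G$ of a point of the Shilov boundary is a \emph{maximal} parabolic and is not amenable in higher rank --- for $G=\Sp(2n,\R)$ it is the stabilizer of a Lagrangian, with Levi factor $\GL(n,\R)$. What one actually uses is the amenability of the $\Gamma$-action on $S^1$, which produces an equivariant measurable map into the space of probability measures on $\check S$; one must then argue (via maximality, or by first mapping to the full flag variety $G/P_{\min}$ and projecting) that the values are Dirac masses. In the sufficiency direction, writing $\langle\varrho^*\kappa_G^b,[\Sigma,\partial\Sigma]\rangle$ as a finite sum of $\tfrac12\beta(\varphi(x_1),\varphi(x_2),\varphi(x_3))$ over an ideal triangulation with vertices in $S^1$ is not a computation one can simply assert: for a closed surface no such finite triangulation exists, and even for surfaces with boundary the identity is itself a theorem about implementing pullbacks in bounded cohomology by boundary maps. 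Finally, the step you rightly call the main obstacle --- existence of one-sided limits of the a.e.-defined monotone map, and the verification that the left-limit modification is still equivariant and still sends maximal triples to maximal triples, including the handling of the exceptional null set --- is only described, not carried out; in \cite{Surface} this occupies the bulk of the proof and relies on the interval structure on $\check S$ determined by pairs of transverse points. As it stands, your text is a correct plan rather than a proof.
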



We end this section with a remark on the relation between homogeneous and inhomogeneous cocycles, because both types will appear in the following sections:
\begin{remark}\label{RemHomInhom}
  Group cohomology can be defined via homogeneous cocycles with boundary operator 
\[
  (\delta_n f)(g_0,\ldots, g_n):=\sum_{i=0}^n (-1)^i f(g_0,\ldots, \hat g_i,\ldots, g_n).
\]
or inhomogeneous cocycles with boundary operator
\begin{align*}
 (d_n f) (g_1,\ldots ,g_{n+1})=&\sum_{i=1}^n(-1)^i f(g_1,\ldots, g_ig_{i+1},\ldots,g_{n+1})\\ &+(-1)^{n+1}f(g_1,\ldots,g_n)+f(g_2,\ldots,g_{n+1}).
\end{align*}
  
They are intertwined as follows: let $f$ be an inhomogeneous $n$-cocycle, i.e. $\delta f=0$, then 
\[
  \tilde f(g_0,g_1,\ldots ,g_n):=f(g_0^{-1}g_1,g_1^{-1}g_2,\ldots,g_{n-1}^{-1}g_n)
\]
is an homogeneous cocycle. Its inverse is provided by
\[
  \bar h(g_1,\ldots, g_n):= h(e,g_1,g_1g_2, g_1g_2g_3,\ldots ,g_1g_2g_3\ldots g_n),
\]
where $h$ is a homogeneous cocycle.
\end{remark}

\subsection{The Souriau Index}\label{SecSouriau} In this section we define the Souriou index which is an essential building block in the proof of Theorem \ref{ThmFormulaIntro}. It is based on \cite{CK07}. Throughout this section $G$ is a Hermitian Lie group of tube type, $\Dca$ its centered bounded symmetric domain and we denote by $K$ the stabilizer of $0\in \Dca$. It is a maximal compact subgroup of $G$.

The universal covering of the Shilov boundary $\check S$, denoted by $\check R$, is given by 
\[
 \check R= \{ (\sigma,\theta)| \sigma \in \check S,\theta \in \R, {\det}_V\sigma=e^{ir\theta}\},
\]
where $r=\rk G$ (\cite[Thm 3.5]{CK07}) and ${\det}_V$ the Jordan algebra determinant. Denote by $\tilde G$ the universal cover of $G$.

\begin{definition}\label{defsouriau}
 Let $\tilde \sigma_1=(\sigma_1,\theta_1),\tilde \sigma_2= (\sigma_2,\theta_2)\in \check R$. They are \emph{transversal} ($\tilde \sigma_1\pitchfork \tilde \sigma_2$) if $\sigma_1$ and $\sigma_2$ are transversal. If $\tilde \sigma_1\pitchfork \tilde \sigma_2$, then there exists $\tilde g\in \tilde G$ such that 
\[
  \tilde g\tilde \sigma_1=\left(\sum e^{i\theta_j}c_j,\theta \right),\quad \tilde g\tilde \sigma_2=\left(\sum e^{i\phi_j}c_j,\phi \right)
\]
for some Jordan frame $\{c_i\}$.

Define the \emph{Souriau index} for transversal points:
\[
 m(\tilde \sigma_1,\tilde \sigma_2)= \frac{1}{\pi}\left[\sum\{\theta_j-\phi_j+\pi\}-r(\theta-\phi)\right],
\]
where $\{x\}$ is the unique representative of $[x]\mod 2\pi$ in $(-\pi,\pi)$. Note that transversality is equivalent to $\varphi_i \neq \theta_i$ for all $i$, whence $m$ is well-defined.\\

If $\tilde \sigma_1$ and $\tilde \sigma_2$ are not transversal then define
\[
  m(\tilde \sigma_1,\tilde \sigma_2):=\beta(\sigma_1,\sigma_2,\sigma_3)+m(\tilde \sigma_1,\tilde \sigma_3)+m(\tilde \sigma_3,\tilde \sigma_2),
\]
where $\tilde \sigma_3:=(\sigma_3,\theta_3)\in\check R$ is transversal to $\tilde \sigma_1$ and $\tilde \sigma_2$ and $\beta$ is the Maslov index.
\end{definition}

The following proposition collects properties of the Souriau index (\cite[Prop. 5.3 and 5.4]{CK07}):
\begin{proposition}\label{PropSouriau}
  The Souriau index is skew-symmetric, i.e. $m(\tilde \sigma_1,\tilde \sigma_2)=-m(\tilde \sigma_2,\tilde \sigma_1)$ and $\tilde G$-invariant.
\end{proposition}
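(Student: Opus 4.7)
The plan is to verify the two properties by a case split: handle the transversal case first, where the explicit formula applies and $\tilde G$-invariance is built into the normal-form definition, then bootstrap to the non-transversal case using the inductive clause together with the known $\tilde G$-invariance and alternation of the Maslov index $\beta$.

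For transversal $\tilde\sigma_1=(\sigma_1,\theta)$ and $\tilde\sigma_2=(\sigma_2,\phi)$, fix $\tilde g\in\tilde G$ bringing the pair to the normal form of Definition \ref{defsouriau}. A preliminary check, which I would carry out first, is that the value of $m(\tilde\sigma_1,\tilde\sigma_2)$ is independent of $\tilde g$: two such choices differ by an element stabilizing the normal-form pair, and such an element either permutes the Jordan frame (leaving the symmetric sum $\sum\{\theta_j-\phi_j+\pi\}$ invariant) or shifts the $\R$-component of both lifts by the same amount (leaving $\theta-\phi$ invariant). With this in hand, $\tilde G$-invariance is immediate: for any $\tilde h\in\tilde G$, if $\tilde g$ normalizes $(\tilde\sigma_1,\tilde\sigma_2)$ then $\tilde g\tilde h^{-1}$ normalizes $(\tilde h\tilde\sigma_1,\tilde h\tilde\sigma_2)$ to the same data, so the formula returns the same value. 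Skew-symmetry on transversal pairs follows by exchanging the roles of $(\theta_j,\theta)$ and $(\phi_j,\phi)$: transversality forces $\theta_j-\phi_j\notin 2\pi\Z$, hence
\[
  \{\theta_j-\phi_j+\pi\}+\{\phi_j-\theta_j+\pi\}=\{y\}+\{-y\}=0
\]
for $y=\theta_j-\phi_j+\pi\notin \pi+2\pi\Z$, while the linear term $-r(\theta-\phi)$ negates by inspection.

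For the non-transversal case I would first verify well-definedness of the recursive formula, i.e.\ that
\[
  \beta(\sigma_1,\sigma_2,\sigma_3)+m(\tilde\sigma_1,\tilde\sigma_3)+m(\tilde\sigma_3,\tilde\sigma_2)
\]
does not depend on the auxiliary lift $\tilde\sigma_3$. For another valid choice $\tilde\sigma_3'$, which one may further assume transversal to $\tilde\sigma_3$ by a density argument in $\check R$, the difference of the two candidate values reduces via the cocycle identity for $\beta$ to a relation purely among Souriau indices of transversal pairs, which is then verified by simultaneously putting all four lifts in a compatible normal form. Granted well-definedness, both properties propagate: $\tilde G$-invariance follows because $\beta$ and $m$ on transversal pairs are both $\tilde G$-invariant and one may use $\tilde h\tilde\sigma_3$ as the auxiliary point; skew-symmetry follows from the alternation of $\beta$ combined with the transversal case via
\begin{align*}
  m(\tilde\sigma_2,\tilde\sigma_1) &= \beta(\sigma_2,\sigma_1,\sigma_3)+m(\tilde\sigma_2,\tilde\sigma_3)+m(\tilde\sigma_3,\tilde\sigma_1)\\
  &= -\bigl(\beta(\sigma_1,\sigma_2,\sigma_3)+m(\tilde\sigma_1,\tilde\sigma_3)+m(\tilde\sigma_3,\tilde\sigma_2)\bigr)=-m(\tilde\sigma_1,\tilde\sigma_2).
\end{align*}

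The main obstacle is the cocycle-type identity needed for well-definedness in the non-transversal case: this is the only step that goes beyond unpacking definitions, and it requires a careful computation comparing the Souriau formula at two different auxiliary lifts with the $2$-cocycle behaviour of $\beta$ on the Shilov boundary. Once that identity is established, the rest of the argument is pure bookkeeping.
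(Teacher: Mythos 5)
The paper does not actually prove this proposition: it is quoted verbatim from Clerc--Koufany, \cite[Prop.~5.3 and 5.4]{CK07}, so there is no internal argument to compare against. Judged on its own terms, your formal skeleton is correct and is essentially the route taken in \cite{CK07}: the computation $\{u_j+\pi\}+\{-u_j+\pi\}=0$ for $u_j=\theta_j-\phi_j\notin 2\pi\Z$ does give skew-symmetry on transversal pairs, and the propagation to non-transversal pairs via the alternation of $\beta$ and the already-established transversal case is a valid deduction.

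The genuine gap is that both well-definedness statements you defer are exactly where the mathematical content lives, and without them neither property is even a well-posed assertion about a function. In the transversal case, two normalizing elements $\tilde g,\tilde g'$ need not differ by something that merely permutes the Jordan frame or shifts the $\R$-components: they can produce \emph{different} Jordan frames and different angle data $(\theta_j,\phi_j)$, and one must show that the quantity $\sum\{\theta_j-\phi_j+\pi\}-r(\theta-\phi)$ is nonetheless an invariant of the pair (for $\Sp(2n,\R)$ this amounts to the invariance of the Souriau angles between two transverse Lagrangians). Note also that $\tilde G$-invariance in the transversal case is not a consequence of well-definedness plus a formal conjugation trick -- it \emph{is} well-definedness, so your ``preliminary check'' is the whole theorem there. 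In the non-transversal case, the independence of the auxiliary lift $\tilde\sigma_3$ is equivalent to the identity $\beta(a,b,c)=m(\tilde a,\tilde b)+m(\tilde b,\tilde c)+m(\tilde c,\tilde a)$ on transversal triples (the paper's Formula \eqref{FMaslovSouriau}, itself only cited from \cite[Thm.~6.1]{CK07}); this is a nontrivial computation relating the normal form of a triple to the normal forms of its three pairs, not bookkeeping. If you take \eqref{FMaslovSouriau} and the well-definedness of the transversal formula as given, your argument closes; as written, the proof stops precisely at the two steps that constitute the cited results.
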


\begin{remark}
  By  \cite[Thm. 6.1]{CK07} we have the following relation between the Souriau-index and the Maslov index on the Shilov boundary: let $a,b,c \in \check S$ and $\tilde a,\tilde b,\tilde c\in \check R$ be arbitrary lifts. Then
  \begin{equation}\label{FMaslovSouriau}
    \beta(a,b,c)=m(\tilde a,\tilde b)+m(\tilde b,\tilde c)+m(\tilde c,\tilde a).
  \end{equation}
\end{remark}

\begin{proposition}\label{LemSouriau}
  Let $\tilde x_1,\tilde x_2\in \check R$ two lifts of  $x\in \check S$ and $\tilde y\in \check R$ arbitrary. Then 
	\[
	  m(\tilde x_1,\tilde x_2)=m(\tilde x_1,\tilde y)+m(\tilde y,\tilde x_2).
	\]
\end{proposition}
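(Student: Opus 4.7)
The plan is to exploit formula \eqref{FMaslovSouriau} applied to the \emph{degenerate} triple $(x,y,x) \in \check S^3$ with the given lifts $(\tilde x_1,\tilde y,\tilde x_2) \in \check R^3$. Since \eqref{FMaslovSouriau} holds for arbitrary triples in $\check S$ and arbitrary lifts, substituting $a=c=x$, $b=y$ yields directly
\[
\beta(x,y,x)=m(\tilde x_1,\tilde y)+m(\tilde y,\tilde x_2)+m(\tilde x_2,\tilde x_1).
\]

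Next, I would observe that the left-hand side vanishes: the Maslov index $\beta$ is skew-symmetric (as recorded after Example \ref{ExMaslovSp2n} and valid on arbitrary, not only pairwise transverse, triples), so swapping the first and third arguments gives $\beta(x,y,x)=-\beta(x,y,x)$, forcing $\beta(x,y,x)=0$. Applying the skew-symmetry of the Souriau index from Proposition \ref{PropSouriau}, namely $m(\tilde x_2,\tilde x_1)=-m(\tilde x_1,\tilde x_2)$, and rearranging produces exactly the claimed identity
\[
m(\tilde x_1,\tilde x_2)=m(\tilde x_1,\tilde y)+m(\tilde y,\tilde x_2).
\]

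The only delicate point is that both \eqref{FMaslovSouriau} and the alternation $\beta(x,y,x)=0$ must be invoked for a non-transverse triple (the first and third points coincide). This is legitimate via the extension of $\beta$ to all triples in $\check S$ and the extension of $m$ to non-transverse lifts built into Definition \ref{defsouriau}; once these extensions are in place, the argument is a one-line cocycle manipulation with no further computation needed.
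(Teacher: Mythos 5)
Your proposal is correct and is essentially the paper's own argument: the paper applies Formula \eqref{FMaslovSouriau} to the degenerate triple $(x,x,y)$ with lifts $(\tilde x_1,\tilde x_2,\tilde y)$ and then uses skew-symmetry of $m$, whereas you use the triple $(x,y,x)$ with lifts $(\tilde x_1,\tilde y,\tilde x_2)$ — an immaterial reordering. Both versions rest on the same two facts, namely the validity of \eqref{FMaslovSouriau} for arbitrary (non-transverse) triples and the vanishing of $\beta$ on a triple with a repeated entry.
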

\begin{proof}
  We use Formula \ref{FMaslovSouriau}.  We have:
  \[
    0=\beta(x,x,y)=m(\tilde x_1,\tilde x_2)+m(\tilde x_2,\tilde y)+m(\tilde y,\tilde x_1)
  \]
  and the statement follows from the skew-symmetry of $m$. 
\end{proof}
Immediate consequences from Proposition \ref{LemSouriau} and the $G$-invariance are:
\begin{lemma}\label{RotSouriau}
 Let $g\in G$ and $x\in \check S$ fixed by $g$. Let $\tilde g\in \tilde G$ and $\tilde x\in \check R$ be lifts. Then $m(\tilde g^n \tilde x,\tilde x)=n\cdot m(\tilde g\tilde x,\tilde x)$.
\end{lemma}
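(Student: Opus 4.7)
The plan is to prove this by induction on $n$, using only Proposition \ref{LemSouriau} (additivity over a third lift of the same base point) and the $\tilde G$-invariance from Proposition \ref{PropSouriau}. The key observation is that since $g$ fixes $x$, every $\tilde g^k \tilde x$ is a lift of $x$, so we may freely apply Proposition \ref{LemSouriau} with any two of these lifts playing the roles of $\tilde x_1, \tilde x_2$.

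Concretely, for the inductive step I would set $\tilde x_1 = \tilde g^n \tilde x$ and $\tilde x_2 = \tilde x$ (both lifts of $x$) and insert $\tilde y = \tilde g \tilde x \in \check R$ into Proposition \ref{LemSouriau} to get
\[
  m(\tilde g^n \tilde x, \tilde x) = m(\tilde g^n \tilde x, \tilde g \tilde x) + m(\tilde g \tilde x, \tilde x).
\]
Then I would apply $\tilde g^{-1}$ to the first term and use $\tilde G$-invariance of $m$ to rewrite it as $m(\tilde g^{n-1} \tilde x, \tilde x)$. This yields the recursion $m(\tilde g^n \tilde x, \tilde x) = m(\tilde g^{n-1} \tilde x, \tilde x) + m(\tilde g \tilde x, \tilde x)$, from which the formula for all $n \geq 1$ follows immediately by induction, with base case $n = 1$ tautological.

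To cover $n = 0$ and $n < 0$, I would note that $m(\tilde x, \tilde x) = 0$ by skew-symmetry (Proposition \ref{PropSouriau}), handling $n = 0$, and then for $n < 0$ either run the same argument in reverse or apply skew-symmetry together with $\tilde G$-invariance to reduce $m(\tilde g^n \tilde x, \tilde x) = -m(\tilde x, \tilde g^n \tilde x) = -m(\tilde g^{-n} \tilde x, \tilde x)$, which by the positive case equals $-(-n) m(\tilde g \tilde x, \tilde x) \cdot \text{sign}$ — more cleanly, skew-symmetry applied to the identity $0 = m(\tilde g^n \tilde x, \tilde x) + m(\tilde x, \tilde g^n \tilde x)$ combined with $\tilde G$-invariance gives the negative-$n$ case from the positive one.

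I do not anticipate any genuine obstacle here: the statement is essentially the cocycle identity for $m$ combined with the fact that $\tilde g$ commutes with itself. The only point that requires a moment of care is verifying that $\tilde g \tilde x$ is indeed an element of $\check R$ (so Proposition \ref{LemSouriau} applies with it as $\tilde y$) and that it projects to $x$ in $\check S$ (so the $\tilde G$-invariance step legitimately transports $\tilde g^n \tilde x, \tilde g \tilde x$ down to $\tilde g^{n-1}\tilde x, \tilde x$); both are immediate from $g x = x$ and the definition of the $\tilde G$-action on $\check R$.
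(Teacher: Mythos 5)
Your argument is correct and is exactly what the paper intends: the lemma is stated there as an immediate consequence of Proposition \ref{LemSouriau} and the $\tilde G$-invariance of $m$, with no written proof, and your induction via $m(\tilde g^n\tilde x,\tilde x)=m(\tilde g^n\tilde x,\tilde g\tilde x)+m(\tilde g\tilde x,\tilde x)$ followed by applying $\tilde g^{-1}$ is the intended route. The negative-$n$ case is also handled correctly in your final, cleaned-up formulation via skew-symmetry and invariance.
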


\begin{lemma}
  Let $x\in \check S$ and $\tilde x\in \check R$ a lift. Let $H<\tilde G$ be the lift of the stabilizer  of $x$ in $G$. Then 
  \[
     m(~\cdot~ \tilde x,\tilde x):H\rightarrow \R
  \]
  is a homogeneous quasimorphism.
\end{lemma}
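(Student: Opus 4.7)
The plan is to verify the two axioms of a homogeneous quasimorphism directly, noting that one in fact gets a group homomorphism $f(\tilde g) := m(\tilde g\tilde x, \tilde x) \colon H \to \R$; any such homomorphism into $\R$ is automatically a homogeneous quasimorphism (with zero defect).

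Homogeneity is an immediate consequence of Lemma \ref{RotSouriau}. Every $\tilde g \in H$ projects to an element $g \in G$ fixing $x$, so the hypothesis of that lemma is satisfied and it yields
\[
f(\tilde g^n) = m(\tilde g^n\tilde x, \tilde x) = n \cdot m(\tilde g\tilde x, \tilde x) = n f(\tilde g)
\]
for every $n \in \Z$.

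For the additive (cocycle) property, I would exploit the fact that both $\tilde g\tilde h\tilde x$ and $\tilde x$ lie in the fibre of $\check R \to \check S$ over $x$. Indeed, since $\tilde g\tilde h \in H$, it projects to the stabilizer of $x$, so $\tilde g\tilde h\tilde x$ is a lift of $x$, exactly like $\tilde x$. One can therefore apply Proposition \ref{LemSouriau} with $\tilde x_1 = \tilde g\tilde h\tilde x$, $\tilde x_2 = \tilde x$, and with the (arbitrary) auxiliary point $\tilde y = \tilde g\tilde x$, obtaining
\[
f(\tilde g\tilde h) = m(\tilde g\tilde h\tilde x, \tilde g\tilde x) + m(\tilde g\tilde x, \tilde x).
\]
The $\tilde G$-invariance of $m$ provided by Proposition \ref{PropSouriau} rewrites the first term on the right as $m(\tilde h\tilde x, \tilde x) = f(\tilde h)$, and we conclude $f(\tilde g\tilde h) = f(\tilde h) + f(\tilde g)$.

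There is no real obstacle here: the argument is essentially a two-line manipulation of the cocycle identity for $m$ on lifts of a single point (Proposition \ref{LemSouriau}) combined with the $\tilde G$-equivariance of the Souriau index (Proposition \ref{PropSouriau}). The only point worth flagging is the verification that $\tilde g\tilde h\tilde x$ and $\tilde x$ project to the same point of $\check S$, which is precisely what the inclusion $\tilde g\tilde h \in H$ guarantees and which legitimizes the use of Proposition \ref{LemSouriau}.
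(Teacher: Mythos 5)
Your proof is correct and uses exactly the ingredients the paper itself cites for this lemma (which it leaves as an "immediate consequence"): the cocycle identity of Proposition \ref{LemSouriau} applied to the two lifts $\tilde g\tilde h\tilde x$ and $\tilde x$ of $x$, together with the $\tilde G$-invariance of $m$ from Proposition \ref{PropSouriau}. Observing that $f$ is in fact a homomorphism, so that homogeneity is automatic, is a clean way to finish.
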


\begin{lemma}\label{contrmaximal}
Let $g\in G$, $y\in \check S$ a fixed point of $g$ and $x\in \check S$ an arbitrary point. 
Let $\tilde x$ and $\tilde y$ be arbitrary lifts of $x$ and $y$ and $\tilde g$ a lift of $g$ which fixes $\tilde y\in \check R$.

Then
\[
  \beta(y,gx,x)=m(\tilde g\tilde x,\tilde x).
\]
\end{lemma}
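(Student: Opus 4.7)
The plan is to unpack $\beta(y,gx,x)$ using the Maslov--Souriau formula \eqref{FMaslovSouriau} and then exploit the $\tilde G$-invariance of the Souriau index together with the hypothesis that $\tilde g$ fixes $\tilde y$. There are essentially no calculations to do; the proof is a direct algebraic identity in three lines.

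First, I would choose lifts for the triple $(y,gx,x)$. The natural choice is $(\tilde y,\,\tilde g\tilde x,\,\tilde x)$, where $\tilde y$, $\tilde x$, $\tilde g$ are the lifts given in the statement. Applying the Maslov--Souriau formula \eqref{FMaslovSouriau} to this triple yields
\[
\beta(y,gx,x)\;=\;m(\tilde y,\tilde g\tilde x)+m(\tilde g\tilde x,\tilde x)+m(\tilde x,\tilde y).
\]
This is valid for arbitrary lifts, so I do not need to worry about transversality conditions among $y$, $gx$, $x$.

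Second, I would simplify the first summand using the $\tilde G$-invariance of $m$ (Proposition \ref{PropSouriau}): acting by $\tilde g^{-1}$ gives
\[
m(\tilde y,\tilde g\tilde x)\;=\;m(\tilde g^{-1}\tilde y,\tilde x)\;=\;m(\tilde y,\tilde x),
\]
where the second equality uses the hypothesis $\tilde g\tilde y=\tilde y$, i.e. $\tilde g^{-1}\tilde y=\tilde y$. Finally, by skew-symmetry of $m$ (Proposition \ref{PropSouriau}), $m(\tilde x,\tilde y)=-m(\tilde y,\tilde x)$, so the first and third terms cancel, leaving
\[
\beta(y,gx,x)\;=\;m(\tilde g\tilde x,\tilde x),
\]
which is the desired identity.

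There is no substantial obstacle; the only thing to be careful about is that the right-hand side is a priori well-defined despite the apparent dependence on the auxiliary lifts $\tilde y$ and $\tilde g$. This is already encoded in the proof: the lifts $\tilde y$ enters only through the cancellation $m(\tilde y,\tilde x)-m(\tilde y,\tilde x)=0$, confirming that the answer depends only on the lift $\tilde x$ and on the choice of $\tilde g$ relative to $\tilde y$ (which together determine $\tilde g\tilde x$). Thus the identity holds in full generality as stated.
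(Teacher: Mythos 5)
Your proof is correct and follows exactly the same route as the paper's: apply Formula \eqref{FMaslovSouriau} to the lifted triple $(\tilde y,\tilde g\tilde x,\tilde x)$, then cancel $m(\tilde y,\tilde g\tilde x)+m(\tilde x,\tilde y)$ using $\tilde G$-invariance together with $\tilde g\tilde y=\tilde y$ and the skew-symmetry of $m$. Your version merely spells out the cancellation in one more intermediate step than the paper does.
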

\begin{proof}
  By Formula \eqref{FMaslovSouriau} and the assumptions we have
  \[
    \beta(y,gx,x)=m(\tilde y,\tilde g\cdot \tilde x)+m(\tilde g \cdot \tilde x,\tilde x)+m(\tilde x,\tilde y).
  \]
By $\tilde G$-invariance and skew-symmetry we have $m(\tilde y,\tilde g \tilde x)+m(\tilde x,\tilde y)=0$.
\end{proof}

\begin{remark}\label{RemPsi}
  Let $\sigma_1,\sigma_2\in \check S$ transversal. Let $k\in K$ such that
  \[
 k \sigma_1=\sum e^{i\theta_j}c_j,\quad k \sigma_2=\sum e^{i\phi_j}c_j,
  \]
  for some Jordan frame $\{c_j\}$.
  As in \cite[Ch.5]{CK07} we define 
  \[
    \Psi(\sigma_1,\sigma_2):=  \sum\{\theta_j-\phi_j+\pi\}.
  \]
  If $\sigma_1$ and $\sigma_2$ are not transversal, there exists $\sigma_3\in \check S$ transversal to both of them and we define
  \[
    \Psi(\sigma_1,\sigma_2):=\pi \beta(\sigma_1,\sigma_2,\sigma_3)+\Psi(\sigma_3,\sigma_2)+\Psi(\sigma_1,\sigma_3).
  \]
  In particular
  \begin{equation}\label{EqBetaPsi} \beta(\sigma_1,\sigma_2,\sigma_3)=\frac{1}{\pi}\big[\Psi(\sigma_1,\sigma_2)+\Psi(\sigma_2,\sigma_3)+\Psi(\sigma_3,\sigma_2)\big].
  \end{equation}
  $\Psi$ is invariant under $K$ and skew-symmetric (\cite[Prop.5.4]{CK07}).
\end{remark}
An important property of $\Psi$ is \cite[Formula (16)]{CK07}:
\[ 
  e^{2i\Psi(\sigma,\tau)}=({\det}_V \sigma)^2({\det}_V \tau)^{-2},
\]
where, again, ${\det}_V $ is the Jordan algebra determinant.

\begin{proposition}\label{PropPsiHom}
  Fix $b\in \check S$. Then the map
  \[
    f:\begin{cases}
      K \rightarrow \R/\Z\\
      k\mapsto \left[\frac{1}{\pi}\Psi(b,kb)\right]
    \end{cases}
  \]
  is a homomorphism. It does not depend on $b$.
\end{proposition}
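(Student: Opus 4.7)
The plan is to prove both assertions directly from the structural properties of $\Psi$ collected in Remark \ref{RemPsi}: its $K$-invariance, its skew-symmetry, and the identity $e^{2i\Psi(\sigma,\tau)}=({\det}_V\sigma)^2({\det}_V\tau)^{-2}$. The homomorphism property will follow from the $\Psi$-cocycle identity (\ref{EqBetaPsi}) combined with the fact that the Maslov index is integer-valued, while the $b$-independence will follow from the determinant identity combined with $K \subset \Str(V^\C)$.

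For the homomorphism property, I apply the cocycle identity (\ref{EqBetaPsi}) to the triple $(b,k_1b,k_1k_2b)$. Using $K$-invariance of $\Psi$, the middle term $\Psi(k_1b,k_1k_2b)$ collapses to $\Psi(b,k_2b)$, and by skew-symmetry the last term becomes $-\Psi(b,k_1k_2b)$. Rearranging one obtains
\[
 \tfrac{1}{\pi}\bigl[\Psi(b,k_1b)+\Psi(b,k_2b)-\Psi(b,k_1k_2b)\bigr]=\beta(b,k_1b,k_1k_2b),
\]
and since the right-hand side is an integer, the left-hand side — which after dividing is precisely $f(k_1)+f(k_2)-f(k_1k_2)$ — vanishes in $\R/\Z$.

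For independence of $b$, I invoke the determinant identity. Since $K\subset \Str(V^\C)$ by the lemma concluding Section \ref{SecBSD}, the character $\chi$ of Definition \ref{DefChi} is defined on $K$ and ${\det}_V(kb)=\chi(k){\det}_V(b)$. Hence
\[
 e^{2i\Psi(b,kb)}=({\det}_V b)^2\bigl(\chi(k){\det}_V b\bigr)^{-2}=\chi(k)^{-2},
\]
which depends only on $k$. Consequently $\Psi(b,kb)$ is determined modulo $\pi$ by $k$ alone, so the class $f(k)\in \R/\Z$ is independent of the base point $b$.

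The one technical point to watch is that the triple $(b,k_1b,k_1k_2b)$ need not be pairwise transversal — for instance if $k_2$ happens to fix $b$. However, the extension of $\Psi$ to arbitrary pairs carried out in Remark \ref{RemPsi} was designed precisely so that the identity (\ref{EqBetaPsi}) persists for all triples, so the derivation above remains valid without further case analysis. Beyond this, the entire argument is a direct bookkeeping exercise using the two structural properties of $\Psi$.
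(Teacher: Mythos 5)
Your proof is correct, but it routes the two assertions through different tools than the paper does, essentially swapping them. For the homomorphism property the paper works entirely with the identity $e^{2i\Psi(\sigma,\tau)}=({\det}_V\sigma)^2({\det}_V\tau)^{-2}$ and the multiplicativity of the character $\chi$ of Definition \ref{DefChi}, computing $e^{2i\Psi(b,k_1k_2b)}=\chi(k_1k_2)^{-2}=\chi(k_1)^{-2}\chi(k_2)^{-2}=e^{2i(\Psi(b,k_1b)+\Psi(b,k_2b))}$; you instead use the cocycle identity \eqref{EqBetaPsi} together with $K$-invariance, skew-symmetry, and the integrality of the Maslov index. Your computation is exactly the one the paper performs later inside the proof of Proposition \ref{PropRotPsi} (there run in the direction $\delta f_B=\kappa_\beta|_B$), so it is certainly sound, and your caveat about non-transverse triples is handled correctly: the extension of $\Psi$ in Remark \ref{RemPsi} is built so that \eqref{EqBetaPsi} holds for all triples. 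For independence of $b$ the paper uses transitivity of $K$ on $\check S$ to write $b'=lb$, reduces to conjugation invariance of $f$, and then appeals to the already-established homomorphism property into the abelian group $\R/\Z$; your argument that $e^{2i\Psi(b,kb)}=\chi(k)^{-2}$ pins down $\Psi(b,kb)$ modulo $\pi$, hence $f(k)$ in $\R/\Z$, purely in terms of $k$ is more direct — it needs neither transitivity of $K$ nor the homomorphism property, and it makes transparent that $f$ is just $[-\frac{1}{\pi}\arg\chi(k)]$ in disguise. The only thing your route costs is that the homomorphism half leans on the validity of \eqref{EqBetaPsi} for degenerate triples, whereas the paper's character computation sidesteps transversality entirely; conversely, your independence argument is cleaner than the paper's.
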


\begin{proof}
  Let $k_1,k_2\in K$. Then 
  \begin{align*}
     &e^{2i\Psi(b, k_1k_2 b)}=({\det}_V b)^2({\det}_V k_1k_2 b)^{-2}=\chi(k_1k_2)^{-2} ({\det}_V b)^2 ({\det}_V b)^{-2}\\
     =&\chi(k_1)^{-2}({\det}_V b)^{2} ({\det}_V b)^{-2} \chi(k_2)^{-2}({\det}_V b)^2 ({\det}_V b)^{-2}=e^{2i\big(\Psi(b,k_1b)+\Psi(b,k_2 b)\big)},
  \end{align*}
where $\chi $ is the character on $\Str(V^\C)$ introduced in Definition \ref{DefChi}. Therefore $f$ is a homomorphism.\\
  Now we show independence of $b$: let $b,b'\in \check S$. Then there exists $l\in K$ such that $b'=lb$. Then $\Psi(b',kb')=\Psi(b,l^{-1}kl b)$ for all $k\in K$. Since $f$ is a homomorphism into the abelian group $\R/\Z$, $\Psi(b,l^{-1}klb )=\Psi(b,kb)$, for all $k$. Hence $f$ does not depend on $b$.
\end{proof}

\subsection{The Rotation Number and the Souriau Index}\label{SecRot}
Recall $\kappa_\beta\in \h^2(G,\Z)$, defined in Section \ref{SecToledo}. We will express the rotation number $\Rot_{\kappa_\beta}$ in terms of  the map $\Psi$ defined in Section \ref{SecSouriau}. 
\begin{proposition}\label{PropRotPsi}
  Fix $b\in \check S$. Given $g\in G$, let $g=g_eg_hg_u$ its refined Jordan decomposition. Let $k\in C(g_e)\cap K$, where $C(g_e)$ is the conjugacy class of $g_e$. Then
   \[
     \Rot_{\kappa_\beta}(g)=\left[\frac{1}{\pi}\Psi(b,kb)\right] \in \R/\Z.
   \]
\end{proposition}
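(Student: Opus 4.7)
The plan is to establish the identity through three successive reductions. First I would reduce from a general $g \in G$ to its elliptic Jordan factor $g_e$; second, I would pass from $g_e$ to an element $k \in C(g_e) \cap K$ using conjugation invariance of the rotation number; third, I would identify both $\Rot_{\kappa_\beta}|_K$ and $f(k) := [\frac{1}{\pi}\Psi(b,kb)]$ as continuous homomorphisms $K \to \R/\Z$ and compare them on a maximal torus.

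For the first reduction, recall that the rotation number lifts to a continuous homogeneous quasi-morphism $\Rott_{\kappa_\beta}\colon \tilde G \to \R$. In the refined Jordan decomposition $g = g_e g_h g_u$ the three factors pairwise commute, so one may choose commuting lifts with $\tilde g = \tilde g_e \tilde g_h \tilde g_u$, and the additivity of homogeneous quasi-morphisms on commuting elements yields $\Rott_{\kappa_\beta}(\tilde g) = \Rott_{\kappa_\beta}(\tilde g_e) + \Rott_{\kappa_\beta}(\tilde g_h) + \Rott_{\kappa_\beta}(\tilde g_u)$. Both $g_h$ and $g_u$ admit Shilov-boundary fixed points, so combining Lemma \ref{contrmaximal} with Lemma \ref{RotSouriau} gives $\beta(y, g^n x, x) = n \cdot m(\tilde g \tilde x, \tilde x)$; since the Maslov index is bounded in $n$, the quantity $m(\tilde g \tilde x, \tilde x)$ must vanish, which (via the standard identification of $m$ with a primitive of the Maslov cocycle on the lift) forces the hyperbolic and unipotent contributions to vanish modulo $\Z$. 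Hence $\Rot_{\kappa_\beta}(g) = \Rot_{\kappa_\beta}(g_e)$.

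For the second reduction, the elliptic element $g_e$ is $G$-conjugate to some $k \in K$ and conjugation invariance gives $\Rot_{\kappa_\beta}(g_e) = \Rot_{\kappa_\beta}(k)$. That the right-hand side depends only on the $G$-conjugacy class of $g_e$ follows because, by Proposition \ref{PropPsiHom}, $f$ is a homomorphism and hence a class function on $K$, while a standard Cartan-involution argument shows that two $G$-conjugate elements of $K$ are already $K$-conjugate. For the third reduction, both $\Rot_{\kappa_\beta}|_K$ and $f$ are continuous maps $K \to \R/\Z$. Because $\tilde K$ is a central extension of the amenable compact group $K$ by $\Z$ it is itself amenable, and every homogeneous quasi-morphism on an amenable group is an honest homomorphism; consequently $\Rot_{\kappa_\beta}|_K$ is a continuous homomorphism. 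The map $f$ is a homomorphism by Proposition \ref{PropPsiHom}. Two continuous homomorphisms from a compact connected Lie group into $\R/\Z$ agree iff they agree on a maximal torus. Fixing a Jordan frame $\{c_j\}$ and choosing $b := \sum_j c_j$, the torus elements act by $k \cdot c_j = e^{i\theta_j} c_j$, and one can evaluate both sides: $\frac{1}{\pi}\Psi(b, kb) = \frac{1}{\pi}\sum_j \{\pi - \theta_j\}$ by direct unpacking of the definition, while $\Rot_{\kappa_\beta}(k)$ is computed from the inhomogeneous Maslov cocycle $\beta(b, kb, k^2 b)$ (via Remark \ref{RemHomInhom}) together with Lemma \ref{RotSouriau}.

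The main obstacle will be the third step. Tracking the normalizations carefully---in particular reconciling the factor $1/\pi$ on the right-hand side with the $1/(2\pi)$ built into the K\"ahler form and the factor of two relating $\kappa_\beta$ to $\kappa_G$ in Proposition \ref{PropMaslovKaehler}---is where the actual work lies. The identification of $\Rot_{\kappa_\beta}|_T$ with the explicit torus-angle sum provided by $\Psi$ is the core computation underlying the proposition.
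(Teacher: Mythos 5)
Your overall architecture (reduce to the elliptic part, then to $K$ by conjugation invariance, then compare two homomorphisms $K\to\R/\Z$) is reasonable and differs from the paper's route, but it has a genuine gap exactly at the point you yourself flag as ``the core computation'': nothing in your proposal actually establishes $\Rot_{\kappa_\beta}(k)=\left[\frac{1}{\pi}\Psi(b,kb)\right]$ for elliptic $k$, and the tools you cite for it do not apply. Lemma \ref{RotSouriau} and Lemma \ref{contrmaximal} require $g$ to have a fixed point in $\check S$; a generic torus element (e.g.\ a rotation in $\SO(2)\subset\SL(2,\R)$ acting on $S^1$) fixes no point of the Shilov boundary, so the identity $\beta(y,g^nx,x)=n\,m(\tilde g\tilde x,\tilde x)$ is unavailable precisely for the elliptic elements you need to handle. (Those lemmas do legitimately kill the hyperbolic and unipotent contributions in your first reduction, which is fine.) Knowing that both $\Rot_{\kappa_\beta}|_K$ and $f(k)=\left[\frac1\pi\Psi(b,kb)\right]$ are continuous homomorphisms into $\R/\Z$ gives you nothing until you can evaluate $\Rot_{\kappa_\beta}$ on the torus, and that evaluation cannot be read off from the single cocycle value $\beta(b,kb,k^2b)$. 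Note also that you cannot appeal to Proposition \ref{PropQM} here, since its proof in the paper invokes Proposition \ref{PropRotPsi}.

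The missing ingredient is the characterization of the rotation number from \cite[Ch.~7]{Surface}: on an amenable subgroup $B$ (the paper takes $B=\overline{\langle g\rangle}$, which already contains the elliptic part after conjugating it into $K$), $\Rot_{\kappa_\beta}$ is computed by the unique function $f_B:B\to\R$ that is a homomorphism mod $\Z$ and whose coboundary represents $\kappa_\beta|_B$. The actual content of the proposition is the verification that $f_B(h)=\frac1\pi\Psi(b,hb)$ has these two properties --- the homomorphism property via the character $\chi$ (Proposition \ref{PropPsiHom}), and $\delta f_B(g_1,g_2)=\beta(b,g_1b,g_1g_2b)$ via $K$-invariance of $\Psi$ and the cocycle relation \eqref{EqBetaPsi}, together with the choice of $b$ fixed by the hyperbolic--unipotent part so that $\Psi(b,hb)=\Psi(b,h_eb)$. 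That coboundary computation is entirely absent from your proposal, and without it (or an equivalent substitute) the two homomorphisms on $K$ are not forced to agree.
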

For the refined Jordan decomposition see \cite{Kostant} or \cite{BorelJordanDecomp}. 
\begin{proof}
  Recall that $\frac{1}{\pi}\Psi(b,kb)$ defines a homomorphism $K \rightarrow \R/\Z$ (Proposition \ref{PropPsiHom}). Define $B:=\overline {\langle g\rangle} $. Given $h,h'\in B$ the refined Jordan decompositions are compatible, i.e. $h'_e,h'_u,h'_h,h_u,h_e,h_h$ commute pairwise. Since $\Rot$ is conjugation invariant (\cite[Lem.7.2]{Surface}), we can assume that the elliptical part $k$ of $g$ in the refined Jordan decomposition is in $K$. Under this assumptions this holds for all elements in $B$, because the Jordan decompositions are compatible.\\
  We are searching for $f_B:B\rightarrow \R$ which defines a homomorphism $B\rightarrow \R/\Z$ and such that $\partial f_b$ is a representative of $\kappa_\beta$. Let $g=g_ug_hk$ be the refined Jordan decomposition of $g$. Choose $b\in \check S$ such that $g_ug_h$ fixes $b$. Then for any $h\in B$, $h_uh_h$ fixes $b$, since $h$ is the limit of powers of $g$ and their refined Jordan decompositions are compatible. Now we define
  \[
    f_B:
    \begin{cases}
      B\rightarrow \R\\
      h \mapsto \frac{1}{\pi} \Psi(b,hb). 
    \end{cases}
  \]
 Note that $\Psi(b,kb)=\Psi(b,h_eb)$ and therefore by Proposition \ref{PropPsiHom} it defines a homomorphism $B\rightarrow \R/\Z$.\\
 It remains to show $\delta f_B=\kappa_\beta|_B$.
  Let $g_1,g_2\in B$ and denote by $k_1,k_2\in K$ the respective elliptic parts of the refined Jordan decomposition:
  \begin{align*}
    \delta f_B(g_1,g_2)=&f_B(g_1)-f_B(g_1g_2)+f_B(g_1)\\= &\frac{1}{\pi}\left[\Psi(b,k_1b)-\Psi(b,k_1k_2b)+\Psi(b,k_2b)\right]\\
    =&\frac{1}{\pi}\left[ \Psi(b,k_1b )+\Psi(k_1 b,k_1k_2 b)+\Psi(k_1 k_2 b, b)\right]\\  
 =&\beta(b, k_1 b,k_1 k_2 b) =\beta(b,g_1 b,g_1 g_2b),
  \end{align*}
  where we used $K$-invariance of $\Psi$ and Formula \eqref{EqBetaPsi}. 
  This finishes the proof, because $\beta(b,g_1b,g_1g_2b)$ is an inhomogeneous cocycle defining $\kappa_\beta$ (c.f. Remark \ref{RemHomInhom}).
\end{proof}

\begin{definition}
  Let $G$ be a group and $f:G\rightarrow \R$ be a map. Then $f$ is a \emph{quasimorphism}, if there exists $C\in \R$ such that
  \[
    | f(gh)-f(g)-f(h)|\leq C, \quad \forall g,h\in G.
  \]
  A quasimorphism is \emph{homogeneous}, if $f(g^n)=nf(g)$ for all $g\in G$ and $n\in \Z$.
\end{definition}

\begin{proposition}\label{PropQM}
   Let 
     \[ 
    \tau(\tilde g):=\lim_{n\rightarrow \infty}\frac{m(\tilde b,\tilde g^n\tilde b)}{n}.
  \]

    be the homogenization of $\tilde g\mapsto m(\tilde b,\tilde g \tilde b)$. Then $\tau$ is a quasimorphism of $\tilde G$ and
   \[  
     \tau=-\widetilde{\Rot}_{\kappa_\beta}
   \]
\end{proposition}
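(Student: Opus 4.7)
The plan is to first verify that $\phi(\tilde g) := m(\tilde b, \tilde g \tilde b)$ is a quasimorphism, so that its homogenization $\tau$ is automatically a homogeneous quasimorphism with $\tau - \phi$ bounded. Using the $\tilde G$-invariance of the Souriau index (Proposition \ref{PropSouriau}), skew-symmetry, and Formula \eqref{FMaslovSouriau}, I would compute
\begin{align*}
  \phi(\tilde g_1) + \phi(\tilde g_2) - \phi(\tilde g_1 \tilde g_2)
  &= m(\tilde b, \tilde g_1 \tilde b) + m(\tilde g_1 \tilde b, \tilde g_1 \tilde g_2 \tilde b) + m(\tilde g_1 \tilde g_2 \tilde b, \tilde b) \\
  &= \beta(b, g_1 b, g_1 g_2 b),
\end{align*}
where in the first equality I used $\tilde G$-invariance of $m$ on the middle term and skew-symmetry on the last. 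Since the Maslov index is uniformly bounded in absolute value by $\rk G$, this shows $\phi$ is a quasimorphism; the limit defining $\tau$ then converges by the standard theory of quasimorphisms, and $\tau$ is a homogeneous quasimorphism on $\tilde G$.

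Next I would identify the bounded cohomology class of $d\tau$. By Remark \ref{RemHomInhom}, the homogeneous representative $c_{\beta,b}$ of $\kappa_\beta$ corresponds to the inhomogeneous cocycle $(g_1,g_2) \mapsto \beta(b, g_1 b, g_1 g_2 b)$, which is precisely the defect of $\phi$ computed above. Since $\tau - \phi$ is bounded, $d\tau$ represents the same class $\kappa_\beta$ in $\h^2_b(\tilde G, \R)$.

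Finally, I would compare $\tau$ with the rotation number lift. By its construction in \cite[Ch.~7]{Surface}, $\widetilde{\Rot}_{\kappa_\beta}$ is the unique homogeneous quasimorphism on $\tilde G$ vanishing at the identity whose coboundary represents $-\kappa_\beta$ in bounded cohomology. Therefore $\tau + \widetilde{\Rot}_{\kappa_\beta}$ is a homogeneous quasimorphism whose defect vanishes as a class in $\h^2_b(\tilde G, \R)$; equivalently it is a genuine continuous homomorphism $\tilde G \to \R$, which must vanish because $\tilde G$, as the universal cover of a simple Hermitian Lie group, has trivial real abelianization.

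The main obstacle is pinning down the sign convention and precise normalization of $\widetilde{\Rot}_{\kappa_\beta}$ used in \cite{Surface}. To confirm the sign in the statement I would evaluate both sides on a single elliptic element $\tilde g$ lifting $k \in K$, matching $m(\tilde b, \tilde g \tilde b)$ against the $\Psi$-formula of Proposition \ref{PropRotPsi} via the explicit relation $m(\tilde\sigma_1, \tilde\sigma_2) = \tfrac{1}{\pi}\bigl[\Psi(\sigma_1,\sigma_2) - r(\theta - \phi)\bigr]$ coming from Definition \ref{defsouriau} and Remark \ref{RemPsi}.
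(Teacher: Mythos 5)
Your first step is correct and is actually a nice, self-contained alternative to what the paper does: the computation
$\phi(\tilde g_1)+\phi(\tilde g_2)-\phi(\tilde g_1\tilde g_2)=\beta(b,g_1b,g_1g_2b)$ via $\tilde G$-invariance, skew-symmetry and Formula \eqref{FMaslovSouriau} does show that $\phi$ is a quasimorphism with uniformly bounded defect, so $\tau$ exists and is a homogeneous quasimorphism (the paper simply cites \cite[Ch.~10]{CK07} for this). The genuine gap is in the identification $\tau=-\Rott_{\kappa_\beta}$. That identity is, in substance, exactly the assertion that $d\Rott_{\kappa_\beta}$ represents $-\kappa_\beta$ rather than $+\kappa_\beta$ in $\h^2_b(\tilde G,\R)$, and you assume this as ``the construction in \cite[Ch.~7]{Surface}'' while conceding you are unsure of the sign. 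This is not a peripheral normalization to be checked at the end: your own computation shows $d\phi(g_1,g_2)=\beta(b,g_1b,g_1g_2b)$, which is \emph{literally the same cocycle} that appears as $\delta(\Rot_{\kappa_\beta}|_B)$ in the proof of Proposition \ref{PropRotPsi}, so a naive comparison of coboundaries points to the sign $+$, not $-$. The discrepancy is resolved only because $\frac{1}{\pi}\Psi(b,\cdot\,b)$ and $m(\tilde b,\cdot\,\tilde b)$ differ by the angular term $\frac{r}{\pi}(\theta-\phi)$ of Definition \ref{defsouriau}, which is a nontrivial homomorphism on lifted amenable subgroups that does not extend to $\tilde G$; hence the global sign cannot be read off from the cocycle formulas and must be computed.

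Your proposed fix --- evaluate both sides on one elliptic element --- is the right idea in principle, but as stated it is not yet a proof. To make it work you need (a) the characterization of $\Rott_{\kappa_\beta}$ by its bounded coboundary class on all of $\tilde G$ (nowhere stated in this paper, which only uses the restriction property to amenable subgroups via Proposition \ref{PropRotPsi}), so that $\tau=\pm\Rott_{\kappa_\beta}$ globally; and (b) an actual computation of both $\tau(\tilde k)$ and $\Rott_{\kappa_\beta}(\tilde k)$ for elliptic $\tilde k$ with $\tau(\tilde k)\neq 0$. Step (b) is precisely the paper's proof: it reduces by conjugation invariance and the refined Jordan decomposition to $k\in K$, then combines $e^{2i\pi\tau(\tilde k)}=\chi(k)^2$ from \cite[Prop.~10.4]{CK07} with Proposition \ref{PropRotPsi} and the identity $e^{2i\Psi(b,kb)}=\chi(k)^{-2}$ to obtain $\tau\equiv-\Rot_{\kappa_\beta}\bmod\Z$, and concludes by uniqueness of continuous lifts agreeing at $e$. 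Until you carry out this comparison (or locate and verify the exact sign convention in \cite{Surface}), your argument is incomplete at its central step.
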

\begin{proof}
  From \cite[Ch.10]{CK07} we get that $\tau$ does not depend on $\tilde b$ and it is a homogeneous quasimorphism.
  Since $\tau(e)=\widetilde \Rot_{\kappa_\beta}(e)=0$ it is enough to show that 
  \[ 
    \tau(\tilde g)= -\Rot_{\kappa_\beta}(g)~\mod \Z
  \]
  for $g\in G$ and $\tilde g$ any lift, because the lifts of both sides are unique. First note that both sides are conjugation invariant and only depend on the elliptic part of the refined Jordan decomposition of $g$. Hence it is enough to show this equality for $g=k\in K$. \\
 By Proposition 10.4 in \cite{CK07} and Proposition \ref{PropRotPsi} we have 
 \[
   e^{2i\pi \tau(\tilde k)}=\chi(k)^2=e^{-2i \pi\Psi(b,kb)/\pi}=e^{-2i\pi \Rot_{\kappa_\beta}} 
 \]
 for all $k\in K$ and all lifts $\tilde k$ of $k$.
\end{proof}

\begin{corollary}\label{CorRotTau}
   Let $g\in G$ and $x\in \check S$ a fixed point of $g$. Let $\tilde g\in \tilde G$ and $\tilde x\in \check R$ be lifts. Then
   \[
     \widetilde{\Rot}_{\kappa_\beta}(\tilde g)=m(\tilde g \tilde x, \tilde x)
   \]
   and $\Rott_{\kappa_\beta} (\tilde g)=0$ if $\tilde g$ has a fixed point in $\check R$.
\end{corollary}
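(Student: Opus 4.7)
The plan is to read the identity directly off of Proposition \ref{PropQM} by a suitable choice of base point. Proposition \ref{PropQM} asserts that $\widetilde{\Rot}_{\kappa_\beta}(\tilde g) = -\tau(\tilde g)$ where $\tau(\tilde g) = \lim_{n\to\infty} m(\tilde b, \tilde g^n \tilde b)/n$, and that this limit is independent of the choice of $\tilde b \in \check R$. Since $x$ is a fixed point of $g$, I would specialize $\tilde b = \tilde x$, giving
\[
\widetilde{\Rot}_{\kappa_\beta}(\tilde g) \;=\; -\lim_{n\to\infty}\frac{m(\tilde x, \tilde g^n \tilde x)}{n} \;=\; \lim_{n\to\infty}\frac{m(\tilde g^n \tilde x, \tilde x)}{n},
\]
using the skew-symmetry of $m$ from Proposition \ref{PropSouriau}.

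Next I would invoke Lemma \ref{RotSouriau}, which applies precisely because $g$ fixes $x$ (the lift $\tilde g$ need not fix $\tilde x$, but this is the exact hypothesis of that lemma). It yields $m(\tilde g^n \tilde x, \tilde x) = n \cdot m(\tilde g \tilde x, \tilde x)$, so the Cesàro limit collapses to the single value $m(\tilde g \tilde x, \tilde x)$. This proves the first identity.

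For the second assertion, suppose $\tilde g$ admits a fixed point $\tilde y \in \check R$, and let $y := p(\tilde y) \in \check S$ denote its projection, which is automatically fixed by $g$. Applying the identity already established with the choice $\tilde x = \tilde y$, I obtain $\widetilde{\Rot}_{\kappa_\beta}(\tilde g) = m(\tilde g \tilde y, \tilde y) = m(\tilde y, \tilde y)$, which vanishes by the skew-symmetry of $m$.

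The only real point to watch is the legitimacy of evaluating $m$ on pairs of lifts that need not project to transverse points of $\check S$; this is handled by the extension of $m$ to all of $\check R \times \check R$ given in Definition \ref{defsouriau}, with respect to which skew-symmetry, $\tilde G$-invariance and the linearity relation $m(\tilde g^n \tilde x, \tilde x) = n\, m(\tilde g \tilde x, \tilde x)$ remain valid. Once that is kept in mind the argument is essentially a two-line consequence of Proposition \ref{PropQM} and Lemma \ref{RotSouriau}; the main conceptual input has already been absorbed into those statements.
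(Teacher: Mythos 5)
Your proof is correct and follows the same route as the paper: invoke Proposition \ref{PropQM}, specialize the base point to a lift of the fixed point, and collapse the limit via Lemma \ref{RotSouriau}. You are in fact slightly more careful than the printed proof about the order of arguments in $m$ (the sign coming from skew-symmetry) and you spell out the second assertion, which the paper leaves implicit; both are welcome but do not change the argument.
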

\begin{proof}
  We calculate
  \[
    \tau(\tilde g)=\lim_{n\rightarrow \infty}\frac{m(\tilde g^n\tilde x,\tilde x)}{n}=\lim_{n\rightarrow \infty}\frac{n\cdot m(\tilde g\tilde x,\tilde x)}{n}=m(\tilde g \tilde x,\tilde x),
  \]
  where we have used Lemma \ref{RotSouriau}.
\end{proof}

Recall the classical \emph{translation number} $T$ on $\widetilde{\PSL(2,\R)}$. Let $g\in \PSL(2,\R)$ and $\tilde g\in \widetilde{\PSL(2,\R)}$ a lift and $\tilde x \in \R$. Then:
\[
  T(\tilde g)=\lim_{n\rightarrow \infty}\frac{\tilde g^n\tilde x-\tilde x}{n}.
\]
\begin{remark}\label{RemTransl}
The number $T(\tilde g)$ is independent of $\tilde x$. If $g\in \PSL(2,\R)$ has  a fixed point $x\in S^1$ and $\tilde g$ and $\tilde x$ are lifts, then $T (\tilde g)=\tilde g\tilde x-\tilde x$.
\end{remark}
\begin{proposition} \label{PropRotTransl}
  Let $g\in \PSL(2,\R)$ and $\tilde g\in \widetilde{\PSL(2,\R)}$ a lift. 
  \[
    \Rott_{\kappa_\beta}(\tilde g)=-T(\tilde g).
  \]
\end{proposition}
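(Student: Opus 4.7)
My approach is to combine the fixed-point formula of Corollary \ref{CorRotTau} with the analogous classical description of $T$ in Remark \ref{RemTransl}, reducing the identity to an explicit rank-one computation of the Souriau index, and then to extend from elements with a boundary fixed point to all of $\widetilde{\PSL(2,\R)}$ by continuity. Throughout I note that $\widetilde{\Rot}_{\kappa_\beta}$ and $T$ are both continuous and both vanish at the identity, so they are the unique continuous lifts of the $\R/\Z$-valued rotation number in their respective normalisations; hence it suffices to match them on a sufficiently large class of elements.

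First I would treat the case of $g \in \PSL(2,\R)$ possessing a fixed point $x \in S^1 = \check S$, i.e.\ hyperbolic or parabolic elements. Fixing lifts $\tilde g \in \widetilde{\PSL(2,\R)}$ and $\tilde x \in \check R$, Corollary \ref{CorRotTau} gives $\widetilde{\Rot}_{\kappa_\beta}(\tilde g) = m(\tilde g\,\tilde x, \tilde x)$, while Remark \ref{RemTransl} gives $T(\tilde g) = \tilde g\,\tilde x - \tilde x$. Thus the statement reduces to the purely rank-one identity
\[
m(\tilde a, \tilde b) \;=\; -(\tilde a - \tilde b)
\]
for lifts $\tilde a, \tilde b \in \check R$ of the same point of $\check S$. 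Since $\tilde a$ and $\tilde b$ are not transverse, one must invoke the recursive clause of Definition \ref{defsouriau}: insert any transverse lift $\tilde\sigma_3 \in \check R$, observe that the Maslov index $\beta(\sigma_1,\sigma_2,\sigma_3)$ vanishes because two entries coincide, and evaluate the remaining summands via the explicit rank-one formula
\[
m(\theta_1,\theta_2) \;=\; \tfrac{1}{\pi}\bigl[\{\theta_1-\theta_2+\pi\} - (\theta_1-\theta_2)\bigr]
\]
in the parametrisation $\check R = \R$, $\theta \mapsto (e^{i\theta},\theta)$. A piecewise computation shows that the two contributions telescope to $-(\tilde a - \tilde b)$, independently of the auxiliary choice $\tilde\sigma_3$.

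Finally I would extend the identity to all $\tilde g \in \widetilde{\PSL(2,\R)}$ by continuity, using that the set of $g \in \PSL(2,\R)$ with a fixed point on $S^1$ is dense (e.g.\ hyperbolic elements form an open dense subset) and that $T$ and $\widetilde{\Rot}_{\kappa_\beta}$ are continuous on $\widetilde{\PSL(2,\R)}$. The main obstacle is the book-keeping: one has to match the normalisation of the Jordan-algebraic universal cover $\check R$ used in Section \ref{SecSouriau} with the standard universal cover of $S^1 = \partial\mathbb{H}^2$ underlying the definition of $T$, and track the sign arising from the orientation convention entering the K\"ahler class (Definition \ref{DefKaehler}) and from the antisymmetry of $m$. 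Once the two normalisations are aligned, the rank-one computation and the density argument conclude the proof.
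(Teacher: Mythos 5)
Your argument breaks down at the final extension step. The set of $g\in\PSL(2,\R)$ admitting a fixed point on $S^1$ is \emph{not} dense: elliptic elements (those with $|\tr|<2$ for a lift to $\SL(2,\R)$) form a nonempty \emph{open} subset of $\PSL(2,\R)$, so hyperbolic elements are open but not dense, and the non-elliptic locus is a closed set with nonempty complement. Consequently your continuity argument cannot reach elliptic elements, and these are exactly the elements for which the proposition has genuine content: on non-elliptic elements both $\widetilde\Rot_{\kappa_\beta}$ and $T$ take values in a discrete set determined solely by the choice of lift (both vanish on the lifts with a fixed point in $\check R$ resp.\ $\R$, by Corollary \ref{CorRotTau} and Remark \ref{RemTransl}), whereas for an elliptic $g$ the rotation number is a nontrivial element of $\R/\Z$ that must actually be computed. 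Your rank-one Souriau computation for the fixed-point case is essentially correct (the $\{\cdot\}$-terms do telescope and the answer is independent of the auxiliary $\tilde\sigma_3$), but note that in the parametrisation $\theta\mapsto(e^{i\theta},\theta)$ one gets $m(\tilde a,\tilde b)=-(\tilde a-\tilde b)/\pi$, so the normalisation of $T$ (which you defer) carries a factor you would still have to pin down.

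The paper avoids the elliptic problem entirely: it establishes $-T(\tilde g)\equiv\Rot_{\kappa_\beta}(g)\bmod\Z$ for \emph{all} $g$ at once via Proposition \ref{PropRotPsi}, which evaluates $\Rot_{\kappa_\beta}$ on the elliptic part $k\in K$ of the refined Jordan decomposition through $\frac1\pi\Psi(b,kb)$ — in rank one this is precisely the rotation angle, hence the translation number of $\tilde k$. It then concludes from $T(e)=0$ and the uniqueness of the continuous lift of $\Rot_{\kappa_\beta}$ normalised at the identity. To repair your proof you would either need to treat elliptic elements by a separate direct computation (e.g.\ for rotations $k_\theta$, matching $\Psi(b,k_\theta b)$ with the displacement $\tilde k_\theta\tilde x-\tilde x$), or replace the density argument by the fact that both sides are continuous homogeneous quasimorphisms on $\widetilde{\PSL(2,\R)}$ and that the space of such quasimorphisms is one-dimensional — a fact not established in the paper.
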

\begin{proof}
  The map $T$ satisfies $T(e)=0$ and 
  \[
    -T(\tilde g)=\Rot_{\kappa_\beta} \mod \Z
  \]
  (see Proposition \ref{PropRotPsi} and Remark \ref{RemPsi}). Hence $-T(\tilde g)=\Rott_{\kappa_\beta}(\tilde g)$.
\end{proof}

We finish this subsection with a short lemma, which we will need later.

\begin{lemma}\label{homquasim}
  Let $f$ be a homogeneous quasimorphism. Then $f(g)=-f(g^{-1})$. 
\end{lemma}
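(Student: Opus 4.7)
The plan is to exploit the quasimorphism defect together with homogeneity on the powers $g^n$ and $g^{-n}$, and then let $n\to\infty$.

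First I would observe that any homogeneous quasimorphism vanishes on the identity: from $f(e)=f(e^2)=2f(e)$ one gets $f(e)=0$. Applying the quasimorphism condition to the pair $(g,g^{-1})$ then yields a uniform bound $|f(g)+f(g^{-1})|\le C$, where $C$ is the defect of $f$.

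Next I would replace $g$ by its $n$-th power. Homogeneity gives $f(g^n)=nf(g)$ and $f(g^{-n})=nf(g^{-1})$, so the previous inequality becomes $|nf(g)+nf(g^{-1})|\le C$, i.e.\ $|f(g)+f(g^{-1})|\le C/n$ for every positive integer $n$. Letting $n\to\infty$ forces $f(g^{-1})=-f(g)$.

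There is no real obstacle here; the argument is a two-line standard fact about homogeneous quasimorphisms. The only thing to be slightly careful about is the vanishing at the identity, which one needs before identifying $gg^{-1}$ with $e$ inside the defect inequality.
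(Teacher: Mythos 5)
Your argument is correct and is essentially identical to the paper's proof: both first deduce $f(e)=0$ from homogeneity, then apply the defect bound to $(g^n,g^{-n})$ and use $f(g^{\pm n})=nf(g^{\pm 1})$ to get $n|f(g)+f(g^{-1})|\leq C$, forcing the sum to vanish. Nothing to add.
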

\begin{proof}
  Since $f(e)=f(e^n)=nf(e)$ for all $n\in \N$, $f(e)=0$. By the definition of quasimorphism we have:
  \[
	n|f(g)+f(g^{-1})|=|f(g^n)+f(g^{-n})-f(e)|\leq r
  \]
  for all $n\in \N$, hence $f(g)+f(g^{-1})=0$.
\end{proof}

\subsection{Proof of Theorem \ref{ThmFormulaIntro}}
\begin{proof}[Proof of Theorem \ref{ThmFormulaIntro}]
For the calculation of the Toledo invariant according to Theorem \ref{ThmToledo} we have to choose a lift $\tilde\varrho$. Define $c_i:=\varrho(C_i)$. We choose $\tilde c_1:=\tilde \varrho(C_1)$ and $\tilde c_2:=\tilde \varrho(C_2)$ such that both have fixed points in $\check R$, which is possible since $c_1$ and $c_2$ have fixed points in $\check S$. Then $\tilde c_3:=(\tilde c_2 \tilde c_1)^{-1}$ is a lift of $c_3$ and by definition $\tilde c_1$, $\tilde c_2$ and $\tilde c_3$ define a representation of $\Gamma_{0,3}$ into $\tilde G$. 

Since $\tilde c_1$ and $\tilde c_2$ have fixed points in $\check R$, we have $\Rott_{\kappa_\beta}(\tilde c_1)=\Rott_{\kappa_\beta}(\tilde c_2)=0$ (Corollary \ref{CorRotTau}). Therefore it suffices to calculate $\Rott_{\kappa_\beta}(\tilde c_3)$.  Since $\Rott$ is a homogeneous quasimorphism (\cite[Thm.11]{Surface}), we have $\Rott_{\kappa_\beta}(\tilde c)=-\Rott_{\kappa_\beta}(\tilde c^{-1})$ (see \cite{Surface} and Prop \ref{homquasim}), hence 
\[
  \Rott_{\kappa_\beta}(\tilde c_3)=\Rott_{\kappa_\beta}((\tilde c_2\tilde c_1)^{-1})=-\Rott_{\kappa_\beta}(\tilde c_2\tilde c_1).
\]
By Corollary \ref{CorRotTau}, $\Rott_{\kappa_\beta}(\tilde c_2 \tilde c_1)=m(\tilde y_3, \tilde c_2 \tilde c_1 \tilde y_3)$, since $\tilde y_3$ is the lift of a fixed point of $c_2c_1$. Furthermore (Lemma \ref{LemSouriau})
\[
  m(\tilde c_2 \tilde c_1 \tilde y_3,\tilde y_3)=m(\tilde c_1\tilde y_3, \tilde y_3)+m(\tilde c_2\tilde c_1 \tilde y_3, \tilde c_1 \tilde y_3)
\]
We can summarize the discussion above to
\begin{align*}
  T_\varrho=& -\Rott_{\kappa_\beta}(\tilde c_1)-\Rott_{\kappa_\beta}(\tilde c_2)-\Rott_{\kappa_\beta}(\tilde c_3)=-\Rott_{\kappa_\beta}(\tilde c_3)\\ =&\Rott_{\kappa_\beta}(\tilde c_2\tilde c_1)
  =m(\tilde c_2 \tilde c_1\tilde y_3, \tilde y_3)=m( \tilde c_1\tilde y_3, \tilde y_3)+m( \tilde c_2 (\tilde c_1) \tilde y_3,\tilde c_1\tilde y_3)
\end{align*}

Now we will express the right hand side in terms of the Maslov index. By Lemma \ref{contrmaximal} we get
  \[
     m( \tilde c_1\tilde y_3,\tilde y_3)+m(\tilde c_2( \tilde c_1 \tilde y_3),\tilde c_1 \tilde y_3)=\beta(y_1,c_1\cdot y_3,y_3)+\beta(y_2,c_2(c_1 y_3),c_1 y_3)
  \]
 and
  \begin{align*}
   &\beta(y_2,c_2(c_1 y_3),c_1 y_3)+\beta(y_1,c_1\cdot y_3,y_3)\\
    =&\beta(c_1\cdot y_3,y_2,y_3)+\beta(y_1,c_1\cdot y_3,y_3)\\
    =& \beta(y_1,c_1\cdot y_3,y_2)+\beta(y_1,y_2,y_3).
  \end{align*}
  In the first step we used $G$-invariance, the fact that $c_2c_1=c_3^{-1}$ and the anti-symmetry and in the second step the cocycle property. This finishes the proof.
\end{proof}

\begin{remark}
  The proof of Theorem \ref{ThmFormulaIntro} relies on the fact that $m( \tilde x,\tilde g \tilde x)$ behaves like a homogeneous quasimorphism if $\tilde x$ is the lift of a fixed point of $g$. Therefore we cannot expect a similar formula for representations where one or more generators do not have a fixed point. In particular the right hand side of the formula only takes a finite number of values, but since $\Gamma_{0,3}$ is free, the Toledo invariant is surjective on the interval $[- r,r]$, where $r$ is the rank of $G$ (\cite[Thm.1]{Surface}).
\end{remark}

\begin{corollary}\label{boundarycond}
  Let $\varrho$ be a maximal representation of $\Gamma_{0,3}$ into a Hermitian Lie group $G$ of tube type. Then  \begin{itemize}
	\item[(i)] each $c_i:=\varrho(C_i)$ has a fixed point $y_i\in \check S$,
	\item[(ii)] $\beta(y_1,y_2,y_3)=r$,
	\item[(iii)] $\beta(y_1,c_1\cdot y_3,y_2)=r$.
\end{itemize}
Conversely if $\varrho$ satisfies (i)-(iii), then $\varrho$ is maximal.
\end{corollary}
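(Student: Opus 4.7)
Both directions are driven by the identity of Theorem \ref{ThmFormulaIntro}, whose hypothesis is precisely that each $c_i$ fixes some $y_i \in \check S$. The first order of business is therefore to produce those fixed points in the forward direction.

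\textbf{Forward direction, (i).} Fix a hyperbolization $h : \Gamma_{0,3} \to \PSL(2,\R)$. Each boundary generator $h(C_i)$ is hyperbolic with attracting and repelling fixed points $p_i^\pm \in S^1 = \partial \D$. By Theorem \ref{ThmLimitCurve}, maximality of $\varrho$ provides a left-continuous, $\varrho$-equivariant limit curve $\varphi : S^1 \to \check S$ sending maximal triples to maximal triples. Equivariance forces $c_i \cdot \varphi(p_i^\pm) = \varphi(h(C_i) p_i^\pm) = \varphi(p_i^\pm)$, so $y_i := \varphi(p_i^+)$ is a fixed point of $c_i$, giving (i).

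\textbf{Forward direction, (ii) and (iii).} With (i) in hand, Theorem \ref{ThmFormulaIntro} yields
\[
  T_\varrho \;=\; \tfrac{1}{2}\big(\beta(y_1,y_2,y_3) + \beta(y_1,c_1\cdot y_3,y_2)\big).
\]
Since $\varrho$ is maximal and $|\chi(\Sigma_{0,3})| = 1$, the left-hand side equals $r = \rk \Dca$. By Definition \ref{DefMaslov} the Maslov index takes values in $\{-r, \ldots, r\}$, so each of the two summands is at most $r$, and their sum is at most $2r$. The sum equals $2r$ exactly when both summands equal $r$, so $\beta(y_1,y_2,y_3) = r = \beta(y_1,c_1\cdot y_3,y_2)$, which is (ii) and (iii).

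\textbf{Converse.} Conditions (i)--(iii) let us apply Theorem \ref{ThmFormulaIntro} again, giving $T_\varrho = \tfrac{1}{2}(r + r) = r = |\chi(\Sigma_{0,3})|\rk\Dca$, so $\varrho$ is maximal.

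The only genuine step is the extraction of the fixed points via the limit curve; everything else is a pigeonhole on the already-proven formula together with the elementary bound $|\beta| \le r$. I do not expect a substantive obstacle, only care in invoking Theorem \ref{ThmLimitCurve} to get honest fixed points (as opposed to merely boundary orbits).
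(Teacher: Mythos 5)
Your proof is correct and follows essentially the same route as the paper: the paper also deduces (ii), (iii) and the converse immediately from Formula \eqref{Formula} together with the bound $|\beta|\le r$. The only cosmetic difference is in (i), where the paper simply cites Lemma 8.8 of \cite{Surface}, while you rederive the fixed points from the equivariant limit curve of Theorem \ref{ThmLimitCurve} (itself a result of \cite{Surface}); both are valid.
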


\begin{proof} Let $\varrho:\Gamma_{0,3}\rightarrow G$ be a maximal representation. 
Then (i) follows from \cite[Lemma 8.8]{Surface}.
Properties (ii) and (iii) as well as the converse follow immediately from Formula \eqref{Formula}.
\end{proof}

\begin{remark}\label{RemFPTransversal}
  An important consequence of Proposition \ref{boundarycond} (ii) is the fact that given a fixed point $y_i$ of $c_i$, all fixed points of $c_j$ with $j\neq i$ are transverse to $y_i$. In particular, if we calculate in $T_\Omega$ as in Section \ref{SecBSD}, we can assume that $y_i=\infty$. Then every fixed point of $c_j$, $i\neq j$ is contained in $V$. 
\end{remark}

\begin{remark}
Note that there there are other possibilities to state the formula:

  The Maslov index is antisymmetric and $G$-invariant, hence
  \begin{equation}\label{EqMaslovBoundary}
    \beta(y_1,c_1\cdot y_3,y_2)=\beta(y_2,c_2\cdot y_1,y_3)=\beta(y_3,c_3\cdot y_2,y_1),
  \end{equation}
  i.e. we can also express the Toledo invariant in terms of $c_2y_1$ or $c_3y_2$ and the fixed points $y_1$, $y_2$ and $y_3$.
\end{remark}

\section{Representations of $\Gamma_{0,3}$ into $\Sp(2n,\R)$}\label{Sec03}
The purpose of this section is the proof of Theorem \ref{MainThmIntro} using Proposition \ref{Prop03} and Proposition \ref{PropNormalForm} below. In the following remark we will fix the model for the symmetric space and its boundary on which we perform all calculations.

\begin{remark}\label{RemAddOnePoint}
Let $\varrho$ be a maximal representation into $\Sp(2n,\R)$. Then the image of its limit  curve in $\check S$ is a $\varrho(\Gamma)$-invariant subset of $\check S$ such that any two different points are transversal. 
Recall from Remark \ref{RemVopendense} that the image of $V=\Sym_n(\R)\subset \bar T_\Omega$ under the Cayley transform consists of all elements which are transversal to the point $I\in \check S$. Hence we can assume that $\varphi(S^1)-\{\pt\}\subset p(\Sym_n(\R))\subset \check S$. So if we add a point $\infty$ to $\bar T_\Omega$ whose stabilizer in $\Sp(2n,\R)$ is the same as the one of $I\in \check S$, then $\Sym_n(\R)\cup \{\infty\}$ contains a $\varrho(\Gamma)$-invariant subset. Moreover if $\infty$ is a fixed point of, say, $\varrho(C_3)$ then all fixed points of $\varrho(C_1)$ and $\varrho(C_2)$ are contained in $\Sym_n(\R)$, even the ones that are not on the limit curve (c.f. Remark \ref{RemFPTransversal}). So we add $\infty$ to $\bar T_\Omega$. This point ``equals $p(I)$'' and it can be seen as the limit point of all sequences of diagonal matrices in $\bar T_\Omega$ where \emph{all} eigenvalues go to infinity. Using this description we get the following action of $\Sp(2n,\R)$ on $\infty$:
\[
  \left(\begin{array}{cc}
	A&B\\
	C &D
\end{array}\right)\infty =AC^{-1}
\] 
and 
\[
  \left(\begin{array}{cc}
	A&B\\
	C &D
\end{array}\right)X=\infty
\]
if and only if $CX+D=0$.
\end{remark}
Recall that for $k\in\{0,\ldots,n\}$, we defined
\[
  I_k:=\left(\begin{array}{cc}
	1_k&\\
	 &-1_{n-k}
\end{array}\right),
\]
where $1_k$ denotes the $k\times k$ identity matrix.

 \begin{proposition}\label{Prop03}
Let $\varrho:\Gamma_{0,3}\rightarrow \Sp(2n,\R)$ such that $c_1:=\varrho(C_1)$ fixes $0$, $c_2:=\varrho(C_2)$ fixes $I_i$, $\varrho(C_3)$ fixes $\infty$ and such that $\varrho(C_1)\infty$, $0$, $e$ and $\infty$ are pairwise transversal. Then there exists $X_1,X_2,X_3\in \GL(n,\R)$ with $X_3(X_2^\top)^{-1}X_1$ symmetric such that
  \begin{align*}
    c_1=&\left( \begin{array}{cc}X_1&0\\X_2^{-1}X_3^\top+I_iX_1&(X_1^\top)^{-1}\end{array}\right),
  \\
  c_2=&\left( \begin{array}{cc}-I_i(X_2^\top)^{-1}-I_iX_3^{-1}X_1^\top I_i-X_2 I_i & I_iX_3^{-1}X_1^\top+X_2 \\ -(X_2^\top)^{-1} -X_3^{-1}X_1^\top I_i & X_3^{-1}X_1^\top \end{array}\right)\\
  c_3=&\left( \begin{array}{cc}(X_3^\top)^{-1}&-(X_3^\top)^{-1}I_i-X_1^{-1}X_2^\top \\0&X_3\end{array}\right)
\end{align*}
and 
\begin{equation}\label{EqFormulaToledoSp2nR}
  T_\varrho=i+j-n,
\end{equation}
where $j=\frac{1}{2}\left(\beta(0,\varrho(C_1)\infty,I_i)+n\right)$.
\end{proposition}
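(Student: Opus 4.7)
The plan is to parametrize $c_1$ and $c_3$ from their fixed points at $0$ and $\infty$, solve for $c_2 = c_3^{-1}c_1^{-1}$, and use the fixed-point condition $c_2 \cdot I_i = I_i$ to introduce the third parameter $X_2$. The Toledo invariant formula is then an immediate application of Theorem \ref{ThmFormulaIntro} together with Example \ref{ExMaslovSp2n}.

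\textbf{Step 1.} Since $c_1$ acts by Möbius transformations on $T_\Omega$ and fixes $0$, its upper-right block vanishes; the symplectic relations \eqref{inspn} then force
\[
  c_1 = \begin{pmatrix} X_1 & 0 \\ C_1 & (X_1^\top)^{-1} \end{pmatrix} \text{ with } X_1^\top C_1 = C_1^\top X_1.
\]
Analogously, using the action on $\infty$ from Remark \ref{RemAddOnePoint}, the condition $c_3 \cdot \infty = \infty$ gives
\[
  c_3 = \begin{pmatrix} (X_3^\top)^{-1} & B_3 \\ 0 & X_3 \end{pmatrix} \text{ with } X_3^\top B_3 = B_3^\top X_3.
\]
Invertibility of $X_1$ and $X_3$ follows from $c_1, c_3 \in \Sp(2n,\R)$, and the transversality of $c_1\cdot\infty = X_1 C_1^{-1}$ with $\infty$ ensures that $C_1$ is also invertible.

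\textbf{Step 2.} By \eqref{EqInverse} one obtains
\[
  c_2 = c_3^{-1} c_1^{-1} = \begin{pmatrix} X_3^\top X_1^{-1} + B_3^\top C_1^\top & -B_3^\top X_1^\top \\ -X_3^{-1} C_1^\top & X_3^{-1} X_1^\top \end{pmatrix}.
\]
The transversality of $c_1 \cdot \infty$ with $I_i$ is equivalent to invertibility of $C_1 - I_i X_1$, so one may define
\[
  X_2 := X_3^\top (C_1 - I_i X_1)^{-1} \in \GL(n,\R),
\]
which rewrites $C_1 = X_2^{-1} X_3^\top + I_i X_1$. Imposing $c_2 \cdot I_i = I_i$ on the formula above then determines $B_3$ uniquely, and a direct block computation gives $B_3 = -(X_3^\top)^{-1} I_i - X_1^{-1} X_2^\top$. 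With these expressions, both symplectic symmetry conditions $X_1^\top C_1 = C_1^\top X_1$ and $X_3^\top B_3 = B_3^\top X_3$ collapse to the single requirement that $X_3 (X_2^\top)^{-1} X_1$ be symmetric; substituting back yields the announced matrix forms for $c_1, c_2, c_3$.

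\textbf{Step 3.} For the Toledo invariant, apply Theorem \ref{ThmFormulaIntro} with $y_1 = 0$, $y_2 = I_i$, $y_3 = \infty$:
\[
  T_\varrho = \tfrac{1}{2}\bigl(\beta(0, I_i, \infty) + \beta(0, c_1\cdot\infty, I_i)\bigr).
\]
By Example \ref{ExMaslovSp2n} we have $\beta(0, I_i, \infty) = \sgn I_i = 2i - n$, and combining with the definition $j = \tfrac{1}{2}\bigl(\beta(0, c_1\infty, I_i) + n\bigr)$ yields $T_\varrho = i + j - n$.

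The main technical obstacle lies in Step 2: once the substitution for $X_2$ is identified, the block-matrix verification that both symplectic symmetries and the fixed-point equation $c_2 \cdot I_i = I_i$ all reduce to the single condition that $X_3(X_2^\top)^{-1}X_1$ be symmetric is elementary but requires careful bookkeeping of the various $I_i$-factors appearing in the products.
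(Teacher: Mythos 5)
Your argument is correct, but it follows a genuinely different route from the paper's. The paper's proof is driven by the Maslov index: it sets $Z_1=c_3\cdot I_i$, $Z_2=c_1\cdot\infty$, $Z_3=c_2\cdot 0$, uses the identity $\beta(0,Z_2,I_i)=\beta(Z_1,0,\infty)=\beta(I_i,Z_3,\infty)=2j-n$ to factor $-Z_1$, $Z_2^{-1}-I_i$ and $Z_3-I_i$ as $M_\ell I_j M_\ell^\top$, writes each $c_i$ as an explicit map carrying one transverse triple to another with a residual $\On(j,n-j)$-ambiguity $k_\ell$, eliminates the $k_\ell$ using the relation $c_3c_2c_1=e$, and only then defines the $X_i$ in terms of the $M_\ell$. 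You instead read the free blocks of $c_1$ and $c_3$ directly off the stabilizers of $0$ and $\infty$, set $c_2=c_3^{-1}c_1^{-1}$, and solve the linear equation $c_2\cdot I_i=I_i$ for $B_3$; this is more elementary, and it pins down $X_1,X_2,X_3$ uniquely with no orthogonal ambiguity to dispose of. The trade-off is that the paper's construction yields for free the identity $X_3(X_2^\top)^{-1}X_1=(M_1^{-1})^\top I_jM_1^{-1}$, hence that the signature of $X_3(X_2^\top)^{-1}X_1$ equals $2j-n$; this is not needed for the proposition as stated (only symmetry is claimed, and your Step 3 obtains the Toledo formula directly from Theorem \ref{ThmFormulaIntro} and Example \ref{ExMaslovSp2n}), but it is exactly the fact invoked in the introduction and in the proof of Theorem \ref{MainThmIntro} to convert maximality into positive definiteness of $X_3(X_2^\top)^{-1}X_1$. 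Two minor points should be made explicit in Step 2: bringing the solution of $c_2\cdot I_i=I_i$ into the form $B_3=-(X_3^\top)^{-1}I_i-X_1^{-1}X_2^\top$ uses the symplectic symmetry $X_1^\top C_1=C_1^\top X_1$, which holds because $c_1\in\Sp(2n,\R)$; and the invertibility of $C_1-I_iX_1$ requires transversality of $c_1\cdot\infty$ with $I_i$ rather than with $e$ --- the same requirement the paper's own proof needs for $Z_2^{-1}-I_i$ to be invertible, so this reflects the stated hypotheses rather than a defect of your argument.
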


This proposition  will be proven in Section \ref{SecParam} below.

\begin{remark}\label{RemNonUnique}
Proposition \ref{Prop03} is not enough to prove Theorem \ref{MainThmIntro}, because we want to parametrize conjugacy classes of representations and there are conjugate representations which both have the form as in Proposition \ref{Prop03} but for nonconjugate $(X_1,X_2,X_3)$. Indeed, let $\varrho$ be a maximal representation with $\varrho(C_i)=c_i$ as in Proposition \ref{Prop03} such that some $c_i$ has more than one fixed point in the Shilov boundary. Such $\varrho$ exists,as, for example, hyperbolic elements in $\SL(2,\R)$ have two fixed points in the Shilov boundary $S^1$. Take any triple $(Y_1,Y_2,Y_3)$ of fixed points for the $c_i$. By Corollary \ref{boundarycond} we have: $\beta(Y_1,Y_2,Y_3)=n$, hence there exists $g\in \Sp(2n,\R)$ such that $g(0,e,\infty)=(Y_1,Y_2,Y_3)$. Then $\varrho$ as well as $g\varrho g^{-1}$ are in the form as in Proposition \ref{Prop03} and hence they give different parameters $X_i$ for the same conjugacy class. To circumvent this problem we need canonical representatives for each conjugacy class. Proposition \ref{PropNormalForm} below yields a dynamical criterion to obtain a canonical fixed points for any $\varrho(C_i)$, map them to $(0,e,\infty)$ and apply Proposition \ref{Prop03} and obtain coordinates $(X_1,X_2,X_3)$ which are unique up simultaneously conjugation with the stabilizer of the triple $(0,e,\infty)$.
\end{remark}

Denote by $\sigma(A)$ the spectrum of $A\in \GL(n,\R)$.
\begin{proposition}\label{PropNormalForm}
  Let \begin{equation}
     c=
     \left(\begin{array}{cc}
	A&0\\
	A+(A^\top)^{-1}S &(A^\top)^{-1}
\end{array}\right)\in \Sp(2n,\R),
  \end{equation}
  where $A$ is invertible and $S$ symmetric and positive definite.
  \begin{enumerate}
    \item If $\sigma(A)\subset S^1$ , then $0$ is the unique fixed point of $c$ in $V$.
    \item If $\sigma(A)\nsubseteq S^1$, then $c$ has a \emph{unique} fixed point $Y$ in which the action is non-expanding. There exists $g\in \Sp(2n,\R)$ which maps $0$ to $Y$ such that $gcg^{-1}=\left(\begin{array}{cc}
	A'&0\\
	C' &(A'^\top)^{-1}
\end{array}\right)$ 
where $A'$ has  no eigenvalue of absolute strictly value bigger that $1$ and $C'$ is some $n\times n$-matrix.
  \end{enumerate}
  In case (i) we call 0,  in case (ii) we call $Y$  the canonical fixed point of $c$.
\end{proposition}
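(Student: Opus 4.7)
The plan is to rewrite the fixed-point equation on $V=\Sym_n(\R)$ as a Stein equation and, for case (ii), interpret fixed points as $c$-invariant Lagrangians of $\R^{2n}$. Unfolding $c\cdot X=X$ using $B=0$, $D=(A^\top)^{-1}$, and $C=A+(A^\top)^{-1}S$ gives $XCX+X(A^\top)^{-1}-AX=0$, and the substitution $W=X^{-1}$ (valid when $X$ is invertible) turns this into
\[
  A^\top W A - W = T, \qquad T := A^\top A + S,
\]
with $T$ symmetric positive definite. This reduction will drive both cases.

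For case (i) I would first rule out invertible fixed points by showing the Stein equation has no solution. The trace-pairing adjoint of $\mathcal{L}\colon W\mapsto A^\top WA-W$ is $\mathcal{L}^*\colon U\mapsto AUA^\top-U$. Picking a complex eigenvector $v=a+ib\in\C^n$ of $A$ for an eigenvalue $\lambda\in\sigma(A)\subset S^1$ and setting $U=aa^\top+bb^\top$, a direct computation using $|\lambda|^2=1$ gives $AUA^\top=U$, $U\succeq 0$, $U\ne 0$. Positive definiteness of $T$ forces $\langle T,U\rangle=a^\top Ta+b^\top Tb>0$, so $T\notin\im\mathcal{L}$. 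A non-invertible fixed point $X\ne 0$ would have $\ker X$ necessarily $A^\top$-invariant; in a basis adapted to $\R^n=\ker X\oplus(\ker X)^\perp$ the equation restricts on $(\ker X)^\perp$ to one of identical shape, with upper-left block $\gamma^\top$ still satisfying $\sigma(\gamma)\subset S^1$ and new positive-definite right-hand side $\gamma\gamma^\top+S_{22}$, so induction on $n$ closes this case.

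For case (ii) I would pass to the Lagrangian picture: $X$ is a fixed point of $c$ iff $L_X=\{(Xv,v):v\in\R^n\}$ is a $c$-invariant Lagrangian of $\R^{2n}$. Since $\sigma(c)=\sigma(A)\cup\sigma(A)^{-1}$, the hypothesis $\sigma(A)\not\subset S^1$ makes the unstable generalized eigenspace $E_u\subset\R^{2n}$ of $c$ non-trivial; symplecticity forces $E_u$ and the stable $E_s$ to be isotropic with $\dim E_u=\dim E_s$, and the center $E_c$ to be a symplectic complement of $E_u\oplus E_s$ on which $c$ acts with unimodular spectrum. I would then extend $E_u$ to a $c$-invariant Lagrangian $L\subset\R^{2n}$ by adjoining a $c|_{E_c}$-invariant Lagrangian inside $E_c$; existence and uniqueness of the latter come from a case-(i) style rigidity applied to the elliptic piece $c|_{E_c}$ together with the positive-definite data inherited from $S$. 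Conjugating $c$ by an appropriate $g\in\Sp(2n,\R)$ relating $L_0$ and $L$ then places $gcg^{-1}$ in the stabilizer of $L_0$, i.e., into the required block lower triangular form, and the new upper-left block $A'$ is the action of $c$ induced on $\R^{2n}/L$. Because $L\supset E_u$, no eigenvalue of modulus $>1$ survives in the quotient, giving $\sigma(A')\subset\bar\D$. Uniqueness follows because any other non-expanding fixed point corresponds to an invariant Lagrangian $L'$ on whose quotient $c$ has spectrum in $\bar\D$, forcing $L'\supset E_u$; the case-(i) rigidity inside $E_c$ then pins down the extension uniquely.

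The main obstacle is the treatment of the center eigenspace $E_c$ in case (ii). When $\sigma(A)\cap S^1\ne\emptyset$ one must verify that $c|_{E_c}$ together with the positive-definite datum induced from $S$ really falls under case (i), producing a unique $c|_{E_c}$-invariant Lagrangian inside $E_c$, and then confirm that this Lagrangian assembles with $E_u$ into a single Lagrangian of $\R^{2n}$ without introducing spurious non-uniqueness. All the delicate bookkeeping about eigenvalues on and off $S^1$ happens here.
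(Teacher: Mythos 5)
Your case (i) is essentially the paper's argument in dual form and is correct: both proofs pass to the Stein equation $A^\top Y^{-1}A-Y^{-1}=A^\top A+S$ on an invertible block (the paper via its block lemma, you via the $A^\top$-invariance of $\ker X$ and induction), and where the paper evaluates the quadratic form at a unimodular eigenvector to contradict positive definiteness, you test against the element $U=aa^\top+bb^\top$ of $\ker\mathcal L^*$. These are the same contradiction read through the trace pairing, and your reduction of non-invertible fixed points matches the paper's Lemma on block forms.

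Case (ii), however, has a genuine gap, and it sits exactly where you place your ``main obstacle''. Your plan reduces everything to the elliptic piece $c|_{E_c}$: you need (a) that among the $c|_{E_c}$-invariant Lagrangians of $E_c$ exactly one assembles with $E_u$ into a Lagrangian corresponding to a point of $V$, and (b) that this is governed by ``a case-(i) style rigidity together with the positive-definite data inherited from $S$''. But case (i) is a statement about maps already written in the normal form $\left(\begin{smallmatrix}A_1&0\\ A_1+(A_1^\top)^{-1}S_1&(A_1^\top)^{-1}\end{smallmatrix}\right)$ with $S_1\succ0$ relative to a chosen invariant reference Lagrangian; to invoke it on $E_c$ you must first produce such coordinates on $E_c$ and verify that the induced ``$S_1$'' is positive definite. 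Nothing in the abstract $E_u\oplus E_s\oplus E_c$ decomposition hands you this: the splitting is not adapted to the $n\times n$ block structure in which $S$ lives, and for a general symplectic map with unimodular spectrum an invariant Lagrangian need not even exist, so the positive definiteness is doing real work that you never extract for the restriction. As written the step is circular (the rigidity you want to apply presupposes the normal form whose existence on $E_c$ is the thing to be proved). The paper closes precisely this gap by never decomposing $c$ abstractly: it block-decomposes $A$ itself into a part with spectrum off $S^1$ and a part with spectrum on $S^1$, so that the elliptic and hyperbolic sub-problems are literal sub-matrices $c_1\in\Sp(2k,\R)$ of the \emph{same} normal form with $S_1$ a principal (hence automatically positive definite) submatrix of $S$; the hyperbolic piece is then solved by the explicit convergent series $Y^{-1}=-\sum_{i\ge0}(A^\top)^i(A^\top A+S)A^i$, and uniqueness follows by computing $dc$ at any other fixed point and exhibiting an expanding direction. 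To complete your route you would have to prove a symplectic normal-form lemma for $c|_{E_c}$ carrying the positive-definite datum, which is essentially the paper's block reduction in disguise.
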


We prove Proposition \ref{PropNormalForm} in Section \ref{SecNormalForm}.

\begin{proof}[Proof of Theorem \ref{MainThmIntro}]
First we show that the map $f$ is well-defined. A direct calculation shows that for $c_1$, $c_2$ and $c_3$ the product $c_3c_2c_1$  is equal to the identity if and only if $X_3(X_2^\top)^{-1}X_1$ is symmetric. It remains to show that $\bar f(X_1,X_2,X_3)$ does not depend on the representative of the equivalence class in $R/\On(n)$. Let $(X_1,X_2,X_3)\in R$ and $k\in \On(n)$ and $c_1$, $c_2$ and $c_3$ be the generators of $\bar f(X_1,X_2,X_3)$.  Then the generators of $\bar f(kX_1k^{-1},kX_2k^{-1},kX_3k^{-1})$ are $lc_1l^{-1}$, $lc_2l^{-1}$ and $lc_3l^{-1}$ where 
\[
  l=\left(
\begin{array}{cc}
	k&0\\
	0& k
\end{array}\right),
\]
hence $\bar f(X_1,X_2,X_3)$ and $\bar f(kX_1k^{-1},kX_2k^{-1},kX_3k^{-1})$ are in the same conjugacy class in $\Rep(\Gamma_{0,3},\Sp(2n,\R))$ and $f$ is well-defined. Furthermore $\bar f(X_1,X_2,X_3)$ is maximal by Formula \eqref{EqFormulaToledoSp2nR} in Proposition \ref{Prop03}.

To show that $f$ is bijective, we construct an inverse map. The main ingredients are Proposition \ref{Prop03} and Proposition \ref{PropNormalForm}. Let $\varrho'$ be a maximal representation and define $c_i':=\varrho'(C_i)$.
  
Every $c_i'$ is conjugate to some $c$ as in Theorem \ref{PropGlueSp2nRIntro} (i). Therefore we can apply Proposition \ref{PropNormalForm} and each $c_i'$ has a canonical fixed point $X^+_i$ (in which it acts non-expandingly). Since the triple $(X_1^+,X_2^+,X_3^+)$ is maximal (Cor. \ref{boundarycond}), there exists $h\in \Sp(2n,\R)$ which maps this triple to $(0,e,\infty)$. Therefore the images of $C_1$, $C_2$ resp. $C_3$ under the representation $\varrho:=h\varrho'h^{-1}$  fix $0$, $e$ resp. $\infty$. We can apply Proposition \ref{Prop03} and get $(X_1,X_2,X_3)$ with $X_3(X_2^\top)^{-1}X_1$ symmetric positive definite, such that $\varrho = \bar f(X_1,X_2,X_3)$. By construction the $\varrho(C_i)$ are non-expanding in $0$, $e$ and $\infty$, respectively, hence $(X_1,X_2,X_3)\in R$.  Since $h$ is unique up to left-multiplication with $k\in \stab_G((0,e,\infty))=\On(n)$, the triple $(X_1,X_2,X_3)$ is unique up to conjugation by an element in $\On(n)$. This provides a map inverse to $\bar f$.
\end{proof}

\subsection{Parameters for Representations of $\Gamma_{0,3}$}\label{SecParam}
In this section we proof Proposition \ref{Prop03}. 

\begin{proof}[Proof of Proposition \ref{Prop03}] 
The proof relies on the information for the position of the points $c_1\cdot\infty$, $c_2\cdot 0$ and $c_3\cdot I_i$ in $V$ expressed in terms of the Maslov index in Corollary \ref{boundarycond}.

Define $Z_1$, $Z_2$ and $Z_3$ in $\check S$ by
\[
  Z_1:=c_3\cdot I_i(=c_1^{-1}\cdot I_i),\quad Z_2:=c_1\cdot \infty(=c_2^{-1}\cdot \infty),\quad Z_3:=c_2\cdot 0(=c_3^{-1}\cdot 0),
\]
where the second equalities hold because $c_3c_2c_1=\id$.

We can summarize that to the following conditions for the $c_i$:
\begin{equation}\label{EqMappings}
c_1:\begin{cases}
  0\mapsto 0\\
  Z_1\mapsto I_i\\
  \infty \mapsto Z_2
\end{cases} \quad
c_2:\begin{cases}
  I_i\mapsto I_i\\
  Z_2\mapsto \infty\\
  0\mapsto Z_3
\end{cases} \quad
c_3:\begin{cases}
  \infty \mapsto \infty\\
  Z_3\mapsto 0\\
  I_i \mapsto Z_1
\end{cases} 
\end{equation}
These conditions give a system of three equations for $c_1$, $c_2$ and $c_3$ which determine each of them up to an element of the stabilizer of a certain triple in $\check S$. We will determine all solutions of these equations. 

By \eqref{EqMaslovBoundary} we have
\begin{equation}
  \label{MaxTrip}\beta(0,Z_2,I_i)=\beta(Z_1,0,\infty)=\beta(I_i,Z_3,\infty)=2j-n
\end{equation}
for some $j\in \{0,\ldots,n\}$
and by Example \ref{ExMaslovSp2n} this is the case if and only if the symmetric matrices $-Z_1$, $Z_2^{-1}-I_i$ and $Z_3-I_i$ have the same signature.
If $Z$ is a symmetric matrix of signature $2j-n$, then there exists an invertible matrix $M$, such that $Z=MI_jM^\top$.

For such a $Z$, the equation $Z=MI_jM^\top$ determines $M$  uniquely up to a right multiple $k\in \On(j,n-j)$. Note that $k\in \On(j,n-j)$ if and only if $kI_l k^\top=I_l$ or equivalently $I_lkI_l=(k^\top)^{-1}$. So for the rest of the proof we fix $M_1$, $M_2$ and $M_3$ s.t.
\begin{equation}\label{Zi}
  -Z_1=M_1 I_j M_1^\top,~Z_2^{-1}-I_i=M_2I_j M_2^\top,~Z_3-I_i=M_3I_j M_3^\top,
\end{equation}

For any $k_1\in \On(j,n-j)$ we define:
\[
  c_1:=\left(\begin{array}{cc}
	(M_2^\top)^{-1}k_1 M_1^{-1}&0\\
	M_2I_jk_1 M_1^{-1}+ I_i(M_2^\top)^{-1}k_1M_1^{-1}& M_2 (k_1^\top)^{-1} M_1^\top
\end{array}\right),
\]
It is easy to check that it maps $(0,Z_1,\infty)$ to $(0,I_i,Z_2)$. Let $c\in \Sp(2n,\R)$ be another map which also maps the triple $(0,Z_1,\infty)$ to $(0,I_i,Z_2)$. We show now that $c=c_1$ for some $k_1\in  \On(j,n-j)$. Decompose $c_1$ as follows:
\[
  c_1=\underbrace{\left(
\begin{array}{cc}
	(M_2^\top)^{-1} &0\\
	M_2I_j+I_i(M_2^\top)^{-1} & M_2
\end{array}\right)}_{=:c'}\underbrace{\left(
\begin{array}{cc}
	k_1 &0\\
	0 & (k_1^\top)^{-1}
\end{array}\right)}_{=:l}
\underbrace{
\left(
\begin{array}{cc}
	M_1^{-1} &0\\
	0 & M_1^\top
\end{array}\right)}_{=:c''}.
\]
Then $c'':(0,Z_1,\infty)\mapsto (0,-I_j,\infty)$ and $c':(0,-I_j,\infty)\mapsto (0,I_i,Z_2)$. Hence $(c'')^{-1}c(c')^{-1}$ is an element of the stabilizer of $(0,-I_j,\infty)$ which is 
\[
  \left\{\left(\left.
\begin{array}{cc}
	k &0\\
	0 & (k^\top)^{-1}
\end{array}\right) \right| k\in \On(j,n-j) \right\}.
\]
Analogously we define for $k_2,k_3\in \On(p,q)$:
\[
c_2=\left(
\begin{array}{cc}
	A&B\\
	C&D
\end{array}\right)
\]
with
\begin{align*}
A=&-M_3I_jk_2M_2^{-1}I_i-I_i(M_3^\top)^{-1}k_2I_jM_2^\top -I_i(M_3^\top)^{-1}k_2 M_2^{-1}I_i\\
B=& M_3I_j k_2 M_2^{-1}+I_i(M_3^\top)^{-1}k_2M_2^{-1}\\
C=&	-(M_3^\top)^{-1}k_2 I_jM_2^\top -(M_3^\top)^{-1} k_2 M_2^{-1}I_i\\
D=& (M_3^\top)^{-1}k_2 M_2^{-1}
\end{align*}
and
\[
c_3=\left(
\begin{array}{cc}
	M_1 k_3 M_3^{-1}& -M_1 k_3 I_j M_3^\top -M_1 k_3M_3^{-1} I_i\\
	0& (M_1^\top)^{-1}(k_3^\top)^{-1} M_3^\top
\end{array}\right)
\]

A direct calculation (e.g. by calculating $c_1c_3$ and comparing the product with $c_2^{-1}$ using Formula \eqref{EqInverse}) shows that the product $c_3c_2c_1$ is the identity if and only if $k_3(k_2^\top)^{-1}k_1=1$. 

Note that the sixtuples $(M_1,M_2,M_3,k_1,k_2,k_3)$ as above define precisely the same representation as $(M_1k_1^{-1},M_2,M_3(k_2^\top)^{-1},e,e,e)$. Hence the $k_i$ do not give new parameters. We choose $k_1=k_3=I_j$ and $k_2=\id$ and get:

\begin{align*}
    c_1=&\left( \begin{array}{cc}(M_2^\top)^{-1}I_jM_1^{-1}&0\\M_2M_1^{-1}+I_i(M_2^\top)^{-1}I_jM_1^{-1}&M_2I_jM_1^\top\end{array}\right)\\
    c_2=&\left( \begin{array}{cc}A& B\\ C&D \end{array}\right)
\\
  c_3=&\left( \begin{array}{cc}M_1I_jM_3^{-1}&-M_1I_jM_3^{-1}I_i-M_1M_3^\top \\0&(M_1^\top)^{-1}I_jM_3^\top\end{array}\right)
\end{align*}
with
\begin{align*}
A=&-M_3I_jM_2^{-1}-I_i(M_3^\top)^{-1}I_jM_2^\top -I_i(M_3^\top)^{-1} M_2^{-1}I_i\\
B=& M_3I_j  M_2^{-1}+I_i(M_3^\top)^{-1}M_2^{-1}\\
C=&	-(M_3^\top)^{-1} I_jM_2^\top -(M_3^\top)^{-1}  M_2^{-1}I_i\\
D=& (M_3^\top)^{-1} M_2^{-1}
\end{align*}

Now define
\begin{align*}
X_1:=&(M_2^\top)^{-1}I_j M_1^{-1}\\
X_2:=&M_3I_jM_2^{-1}\\ 
 X_3:=&(M_1^\top)^{-1}I_j M_3^\top.
\end{align*}
Observe $X_3(X_2^\top)^{-1}X_1=(M_1^{-1})^\top I_j M_1^{-1}$ is a symmetric matrix of signature $j$.

We use Formula \eqref{Formula} to calculate the Toledo invariant. By construction the generators have $y_1=0$, $y_2=I_i$ and $y_3=\infty$ as fixed points, hence 
$  \beta(y_1,y_2,y_3)=2i-n$. For the second term in Formula \eqref{Formula} we calculate 
\begin{equation}\label{EqC3e}
  \beta(y_3,c_3\cdot y_2,y_1)=\beta(c_3\cdot y_2,y_1,y_3)=\beta(c_3\cdot I_i,0,\infty).
\end{equation}
By Example \ref{ExMaslovSp2n}, $-\beta(c_3\cdot I_i,0,\infty)$ is the signature of the symmetric matrix $c_3\cdot I_i$ and we calculate
\[
c_3\cdot I_i=(X_3^\top)^{-1}I_iX_3^{-1}-(X_3^\top)^{-1}I_iX_3^{-1}-X_1^{-1}X_2^\top X_3^{-1}=-X_1^{-1}X_2^\top X_3^{-1}.
\]
Hence $\beta(c_3\cdot e,0,\infty)=\sgn X_1^{-1}X_2^\top X_3^{-1} = \sgn X_3(X_2^\top)^{-1}X_1$. This finishes the proof.
\end{proof}

\subsection{Fixed Points of Generators of Maximal Representations}\label{SecNormalForm}
From Proposition \ref{Prop03} we know that image of a standard generator, $\varrho(C_i)$, under  a maximal representation $\varrho:\Gamma_{0,3}\rightarrow \Sp(2n,\R)$ is conjugate to  
\begin{equation}\label{EqStandard}
   c=
     \left(\begin{array}{cc}
	A&0\\
	A+(A^\top)^{-1}S &(A^\top)^{-1}
\end{array}\right)\in \Sp(2n,\R),
\end{equation}
with $A$ invertible and $S$ symmetric definite. Throughout this section we will assume that $c$ has this form. It has at least one fixed point in $\check S$, but maybe more. We will show that $c$ has a unique fixed point in $\check S$, in which it acts non-expandingly, i.e $dc|_Y$ has no eigenvalue of absolute value bigger than $1$. The same proof can be used to show that $c$ has a unique fixed point in which it acts non-contractingly. The fixed non-attracting fixed point and the non-repellent fixed point are transversal if and only if $A$ has no eigenvalue of absolute value $1$. We use the unique non-expanding fixed point as the \emph{canonical fixed point}.

\begin{remark}\label{RemFPV}
  Recall that all fixed points of $c$ are in $V$. Indeed, according to Proposition \ref{Prop03} $c$ can appear as the image of a standard generator under a maximal representation $\varrho$ of $\Gamma_{0,3}$, say $c=\varrho(C_1)$. We can assume that a fixed point of, say, $\varrho(C_3)$ is equal to $\infty$. By Formula \eqref{Formula} every fixed point of $c$ is transverse to $\infty$, hence is contained in $V$.
\end{remark}

\begin{remark}
  In the sequel we sometimes write $C$ for $A+(A^\top)^{-1}S$.
\end{remark}

 In Section \ref{SecHyp} and \ref{SecPar} we introduce and discuss $S$-hyperbolic respectively $S$-parabolic isometries and we prove Proposition \ref{PropNormalForm} for these two cases before we conclude the general case in Section \ref{SecProofNormalForm}.
 
Now we recall briefly some  facts used later, fix some terminology and show how to construct fixed points for $c$.
 
The equation for a fixed point $Y\in \Sym_n(\R)$ of $c$ is
\begin{equation}\label{EqFixedPoint}
  Y(A+(A^\top)^{-1}S)Y+Y(A^\top)^{-1}-AY=0.
\end{equation}

\begin{remark}\label{RemOEDiag}
  Later we will sometimes assume that certain matrices $Y\in \Sym(n,\R)$ have the special form $\left(
\begin{array}{cc}
	Y_1&0\\
	0&0
\end{array}\right)$ with $Y_1\in \Sym(k,\R)$ diagonal and invertible. This is allowed since every element of $V$ is  a symmetric matrix. Hence there exists  $k\in \On(n)$ such that $kYk^{-1}$ has this form. Furthermore $l:=\left(
\begin{array}{cc}
	k&0\\
	0&k
\end{array}\right)\in \Sp(2n,\R)$ and if $Y$ is a fixed point for $g=\left(
\begin{array}{cc}
	A&0\\
	C&D
\end{array}\right)$, then $kYk^{-1}$ is a fixed point for 
\[
  lgl^{-1}=\left(
\begin{array}{cc}
	kAk^{-1}&0\\
	kCk&kDk^{-1}
\end{array}\right).
\] 
Clearly the spectrum of $A$ is equal to the spectrum of $kAk^{-1}$.
\end{remark}
We can use a block structure of $A$ to construct fixed points of $c=\left(
\begin{array}{cc}
	A&0\\
	C&D
\end{array}\right)$ in $\check S$. 
\begin{lemma}\label{LemBlockForm}
\begin{enumerate}
\item  Let $A= \left(
\begin{array}{cc}
	A_1&A_2\\
	0&A_4
\end{array}\right)\in \GL(n,\R)$, such that $A_1$ is a $k\times k$-matrix. Write $S= \left(
\begin{array}{cc}
	S_1&S_2\\
	S_3&S_4
\end{array}\right)$, where $S_1$ has the same size as $A_1$. Define
\[
  d= \left(
\begin{array}{cc}
	A_1&0\\
	A_1+(A_1^\top)^{-1}S_1 &(A_1^\top)^{-1}
\end{array}\right)\in \Sp(2k,\R).
\]
Let $Y_1\in \Sym_k(\R)$ be a fixed point for $d$. Then
 $Y=\left(
\begin{array}{cc}
	Y_1&0\\
	0&0
\end{array}\right)$ is a fixed point for $c$.
\item Conversely if  $ \left(
\begin{array}{cc}
	Y_1&0\\
	0&0
\end{array}\right)$ is a fixed point for $d$ with $Y_1$ invertible, then $A=\left(
\begin{array}{cc}
	A_1&A_2\\
	0&A_4
\end{array}\right)$, where $A_1$ has the same size as $Y_1$ and $Y_1$ is a fixed point for $c_1$ defined as in (i).
\item Let $Y$ be a fixed point of $c$. Then the differential of $c$ in $Y$ is
\[
dc|_Y(v):v\mapsto (-YC+A)v(CY+D)^{-1}.
\]
\end{enumerate}
\end{lemma}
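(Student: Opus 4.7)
The plan is to reduce all three parts to direct block-matrix calculations: parts (i) and (ii) by substituting the ansatz $Y = \left(\begin{smallmatrix} Y_1 & 0 \\ 0 & 0 \end{smallmatrix}\right)$ into the fixed-point equation \eqref{EqFixedPoint} and reading off the four blocks, and part (iii) by differentiating the Möbius-type action $X \mapsto (AX+B)(CX+D)^{-1}$ at $X = Y$.

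For (i), I will first note that since $A$ is block upper triangular with $A_1$ of size $k \times k$, the matrix $(A^\top)^{-1}$ is block lower triangular, with $(1,1)$-entry $(A_1^\top)^{-1}$; consequently the $(1,1)$-block of $A + (A^\top)^{-1}S$ is $A_1 + (A_1^\top)^{-1}S_1$, matching the lower-left block of $d$. Substituting the ansatz, every product of the form $YMY$ has only its $(1,1)$-block nonzero, equal to $Y_1 M_{11} Y_1$, while $YN$ vanishes in its second block row and $NY$ vanishes in its second block column. Therefore the $(2,2)$-block of \eqref{EqFixedPoint} is automatically zero, the $(1,2)$- and $(2,1)$-blocks vanish thanks to the block-triangular structure of $A$ and $(A^\top)^{-1}$, and the $(1,1)$-block reproduces the fixed-point equation for $d$ applied to $Y_1$.

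For (ii), I will run the same block analysis without a priori assuming that the lower-left block $A_3$ of $A$ vanishes. The $(2,1)$-block of \eqref{EqFixedPoint} then reduces to $-A_3 Y_1 = 0$, and since $Y_1$ is invertible this forces $A_3 = 0$, so $A$ is block upper triangular; the $(1,1)$-block then yields the fixed-point equation for $d$ exactly as in (i). For (iii), I will differentiate $c \cdot X = AX(CX+D)^{-1}$ (using $B = 0$) by the product and inverse rules, which gives
\[
  dc|_Y(v) = Av(CY+D)^{-1} - AY(CY+D)^{-1}\, Cv\, (CY+D)^{-1};
\]
the fixed-point identity $AY(CY+D)^{-1} = Y$ collapses the second term to $YCv(CY+D)^{-1}$, producing $(A - YC)\,v\,(CY+D)^{-1}$.

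I do not expect a genuine obstacle, only careful bookkeeping. The one subtle point is that the off-diagonal blocks of \eqref{EqFixedPoint} must vanish without imposing any hidden hypothesis on $S_2$, $S_3$, or $S_4$; this is automatic because every term in the equation is left- or right-multiplied by $Y$, which kills the second block row or column and forces only $S_1$ to enter the $(1,1)$-block equation. If desired, Remark \ref{RemOEDiag} lets one further normalise $Y_1$ to diagonal form by an $\On(n)$-conjugation, but this is not needed for the algebraic identity.
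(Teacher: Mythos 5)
Your proposal is correct and follows essentially the same route as the paper: parts (i) and (ii) by substituting the block ansatz into the fixed-point equation \eqref{EqFixedPoint} and reading off the blocks (with the $(2,1)$-block forcing $A_3=0$ in (ii)), and part (iii) by differentiating $X\mapsto AX(CX+D)^{-1}$ and using $AY(CY+D)^{-1}=Y$. The only cosmetic difference is that you use the derivative-of-the-inverse rule where the paper expands $(C(Y+v)+D)^{-1}$ as a geometric series; both yield the same formula.
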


\begin{corollary}\label{CorDifferentialFP} Using the notation from Theorem \ref{MainThmIntro} we have:
  \[
  dc_1|_0(v)=X_1vX_1^\top,\quad dc_2|_e(v)=X_2vX_2^\top,\quad dc_3|_\infty(v)=X_3vX_3^\top.
\]
\end{corollary}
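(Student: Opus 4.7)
The plan is to apply Lemma \ref{LemBlockForm}(iii) directly for the two fixed points in the finite chart $V = \Sym_n(\R)$, and to switch to a chart at infinity for the third one.

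First, for $c_1$ at the fixed point $0$: reading off the blocks $A = X_1$, $C = X_1 + X_2^{-1}X_3^\top$, $D = (X_1^\top)^{-1}$ from Theorem \ref{MainThmIntro}, the general formula $dc|_Y(v)=(A-YC)v(CY+D)^{-1}$ specializes at $Y=0$ to $v\mapsto A\,v\,D^{-1}$, which is exactly $X_1 v X_1^\top$. Second, for $c_2$ at $Y = I_n$, a routine substitution of the blocks of $c_2$ gives $A - I_n C = -X_2$ and $CI_n + D = -(X_2^\top)^{-1}$, so the differential becomes $(-X_2)\,v\,(-X_2^\top) = X_2 v X_2^\top$.

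For $c_3$, the fixed point lies at $\infty$, so Lemma \ref{LemBlockForm}(iii) does not apply verbatim. I would introduce the chart $W = X^{-1}$ around $\infty$: writing $CX+D = (C+DW)W^{-1}$ and $AX+B = (A+BW)W^{-1}$ and inverting, one finds that the M\"obius action $X\mapsto (AX+B)(CX+D)^{-1}$ becomes, in the $W$-coordinate,
\[
W \longmapsto (C+DW)(A+BW)^{-1}.
\]
Since $c_3$ has $C = 0$, the fixed point $W = 0$ is automatic, and differentiating at $W=0$ leaves only the term $v\mapsto D\,v\,A^{-1}$. Substituting $A = (X_3^\top)^{-1}$, $D = X_3$ gives $X_3 v X_3^\top$.

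The only non-routine piece is ensuring the chart at $\infty$ is set up consistently with the conventions of Remark \ref{RemAddOnePoint}; once the coordinate change $W = X^{-1}$ is justified and the vanishing of the lower-left block of $c_3$ is used, all three computations reduce to one-line substitutions, so there is no real obstacle beyond bookkeeping.
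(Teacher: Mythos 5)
Your proposal is correct and follows exactly the route the paper intends: the corollary is stated as an immediate consequence of Lemma \ref{LemBlockForm}(iii), and your substitutions for $c_1$ at $0$ and $c_2$ at $e$ are the one-line computations the paper has in mind. The chart $W=X^{-1}$ at $\infty$ is the natural way to handle the third case, which the lemma does not literally cover; your computation there is consistent with the paper's conventions (e.g.\ $X_3$ contracting corresponds to $c_3$ acting non-expandingly at $\infty$), so nothing is missing.
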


\begin{remark}
  The matrix $d$ can appear as an image of a standard generators of a maximal representations into $\Sp(2k,\R)$ (see Proposition \ref{Prop03}).
\end{remark}

\begin{proof}[Proof of Lemma \ref{LemBlockForm}]
\begin{enumerate} 
\item 
  For statement (i) note that $S_1$ is positive definite symmetric because $S$ is. The verification that $Y$ is a fixed point point of $c$ is straight forward. Indeed, inserting  $\left(
\begin{array}{cc}
	Y_1&0\\
	0&0
\end{array}\right)$ in the fixed point equation \eqref{EqFixedPoint} for $c$ gives
\begin{align*}
        &   \left(
\begin{array}{cc}
	Y_1 A_1 Y_1&0\\
	0&0
\end{array}\right)
+    
 \left(
\begin{array}{cc}
	Y_1 ((A^\top)^{-1}S)_1Y_1&0\\
	0&0
\end{array}\right)\\
&+       \left(
\begin{array}{cc}
	Y_1 (A^\top)^{-1}_1&Y_1 ((A^\top)^{-1})_2 \\
	0&0
\end{array}\right)
-
       \left(
\begin{array}{cc}
	A_1 Y_1&0 \\
	0&0
\end{array}\right).
  \end{align*}
This is equal to $0$, since $Y_1$ is a fixed point for $c_1$ and the lower left block of $A$ is $0$.
  \item follows from the same equation. The matrix $A_3$ is equal to $0$ since $Y_1$ is invertible.
  \item  We calculate the  differential of $c$ at some point $Y\in V$. We know:
\[
  c:Y\mapsto AY(CY+D)^{-1}.
\]
First we calculate a power series for the map $v \mapsto (C(Y+v)+D)^{-1}$ for small $v\in V$. We abbreviate $M:=CY+D$.
\begin{align*}
  (C(Y+v)+D)^{-1}=& (M+Cv)^{-1}=(1+M^{-1}Cv)^{-1}M^{-1}\\
=&\sum_{i=0}^\infty (-M^{-1}Cv)^iM^{-1},
\end{align*}
where we where allowed to use the geometric series for matrices since we asked $v$ to be small.

Therefore we get
\begin{align*}
 & A(Y+v)(C(Y+v)+D)^{-1}\\=&AY(C(Y+v)+D)^{-1}+Av(C(Y+v)+D)^{-1}\\
=& \sum_{i=0}^\infty AY(-M^{-1}Cv)^iM^{-1}+\sum_{i=0}^\infty Av(-M^{-1}Cv)^iM^{-1}
\end{align*}
and the differential in the point $Y$ is:
\[
  dc|_Y(v)=(-AY(CY+D)^{-1}C+A)v(CY+D)^{-1}.
\]
If $Y$ is a fixed point of $c$ this is:
\[
   dc|_Y(v)=(-YC+A)v(CY+D)^{-1}.
\]
\end{enumerate}
\end{proof}

\subsubsection{Proof of Proposition \ref{PropNormalForm} for $S$-hyperbolic Isometries}\label{SecHyp}

We define
\begin{definition}\label{DefHypGen2}
 Let $G$ be a Hermitian Lie group and $g\in G$. Then $g$ is \emph{Shilov-hyperbolic} (or $S$-hyperbolic) if it has a pair $(g^+,g^-)$ of transversal fixed points in $\check S$, such that $g$ contracts an open and dense subset of $\check S$ to $g^+$ and $g^{-1}$ contracts an open and dense subset to $g^-$. Note that the fixed points $c^+$ and $c^-$ are uniquely determined.
\end{definition}

\begin{proposition}\label{PropContrFixedPoints}
Let $c$ be as in \eqref{EqStandard}. Assume $\sigma(A)\cap S^1 =\emptyset$. Then $c$ is S-hyperbolic.
\end{proposition}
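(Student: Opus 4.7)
The plan is to exhibit $c^+$ and $c^-$ as the unstable and stable subspaces of $c$ acting linearly on $\R^{2n}$, using the identification $\check S\simeq L(\R^{2n})$ from Example \ref{ExShilovLagrangians}. Because $c$ is block lower triangular with diagonal blocks $A$ and $(A^\top)^{-1}$, its spectrum satisfies $\sigma(c) = \sigma(A) \cup \sigma(A)^{-1}$. The hypothesis $\sigma(A)\cap S^1 = \emptyset$ therefore implies $\sigma(c)\cap S^1 = \emptyset$, so there is a $c$-invariant splitting $\R^{2n} = E^s \oplus E^u$, where $E^s$ (resp.\ $E^u$) is the sum of the generalized eigenspaces of $c$ for eigenvalues of absolute value strictly less than (resp.\ greater than) $1$.

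First I would check that $E^s$ and $E^u$ are Lagrangian. Since $c \in \Sp(2n,\R)$ preserves the symplectic form $\omega$, a standard computation gives $\omega(v,w)=0$ whenever $v,w$ are generalized eigenvectors of $c$ for eigenvalues $\lambda,\mu$ with $\lambda\mu\neq 1$. For $v,w\in E^s$ one has $|\lambda\mu|<1$, so $\omega|_{E^s}=0$, and similarly $\omega|_{E^u}=0$. Since the characteristic polynomial of a symplectic matrix is reciprocal, the multiplicities of $\lambda$ and $\lambda^{-1}$ in $\sigma(c)$ agree, forcing $\dim E^s = \dim E^u = n$. Hence $E^s$ and $E^u$ are transversal Lagrangian subspaces, both fixed by $c$. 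Setting $c^- := E^s$ and $c^+ := E^u$ gives the required pair of transversal fixed points in $\check S$.

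To verify the contraction condition, I would use the graph description of Lagrangians transversal to $E^s$. Any such $L$ is the image of a unique linear map $T\colon E^u\to E^s$ via $x\mapsto x + Tx$, and a direct computation shows that $c^k L$ is the graph of $(c|_{E^s})^k \circ T \circ (c|_{E^u})^{-k}$. The spectral radii of $c|_{E^s}$ and of $(c|_{E^u})^{-1}$ are both strictly less than $1$, so the operator norm of this composition tends to $0$, and $c^k L \to E^u = c^+$ in $L(\R^{2n})$. The set $\{L\in L(\R^{2n})\mid L\pitchfork E^s\}$ is Zariski-open, hence open and dense in $\check S$. The symmetric argument applied to $c^{-1}$ shows that $c^{-1}$ contracts an open dense subset to $c^- = E^s$, which completes the proof that $c$ is S-hyperbolic.

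The main technical point is verifying that the stable and unstable subspaces are Lagrangian; this is the classical fact that a symplectic matrix with no eigenvalues on $S^1$ has Lagrangian stable and unstable spaces, the isotropy following from $\omega$-orthogonality between generalized eigenspaces whose eigenvalue product is not $1$. Note that neither the symmetry nor the positivity of $S$ enters the argument, which is consistent with the fact that S-hyperbolicity only depends on the spectral behaviour of the diagonal block $A$; these hypotheses on $S$ will instead play a role in the follow-up statement identifying the canonical fixed point explicitly.
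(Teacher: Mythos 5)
Your argument is correct, but it takes a genuinely different route from the paper. The paper stays inside the tube-domain model: it block-diagonalizes $A$ into an expanding and a contracting part, solves the fixed point equation \eqref{EqFixedPoint} explicitly via the convergent series $Y^{-1}=-\sum_{i\ge 0}(A^\top)^i\bar S A^i$ (Lemma \ref{LemContrExp}), assembles the two block solutions into transversal fixed points $y_1,y_4\in V$ (Lemma \ref{LemBlockForm}), and then computes the differential $dc|_{y_1}$ to verify contraction. You instead work with the linear action of $c$ on $\R^{2n}$ and the identification $\check S\simeq L(\R^{2n})$ of Example \ref{ExShilovLagrangians}: the stable and unstable subspaces are Lagrangian by the standard $\omega$-orthogonality of generalized eigenspaces with $\lambda\mu\neq 1$, and the graph description of Lagrangians transverse to $E^s$ gives the North--South dynamics directly. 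Your approach is shorter and more conceptual, and it makes transparent your (correct) closing observation that only the spectral hypothesis on $A$ is used, not the symmetry or positivity of $S$. What the paper's computational route buys is the explicit series expression for the attracting/repelling fixed points and the fact that $Y^{-1}$ is negative definite; both are reused later, e.g.\ in the twist-parameter formula of Theorem \ref{PropGlueSp2nR}(iii) and in the Stiefel--Whitney class computation of Proposition \ref{RemComp}, so the explicit fixed points are not merely a detour. One presentational point: your proof establishes transversality and convergence in the Lagrangian model, whereas the surrounding text phrases transversality as $\det(X-Y)\neq 0$ in $V^\C$ and contraction as convergence in $V\cup\{\infty\}$; a sentence invoking the equivalence of the two pictures (Example \ref{ExShilovLagrangians} and the characterizations of transversality cited after Definition \ref{DefTransverse}) would make the splice into the paper's framework seamless.
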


Before we give the general proof, we prove two special cases:
\begin{lemma}\label{LemContrExp}  Let $c$ be as in \eqref{EqStandard}. 
\begin{enumerate}
  \item Assume that $A$ only has eigenvalues of absolute value strictly smaller than $1$. Then $c$ has a unique fixed $Y$ point transversal to $0$. It satisfies $\beta(Y,0,\infty)=n$. The action of $c$ in $Y$ is expanding and the differential $dc|_{Y}$ acts as on $T_{Y}V$ as
  \[
     dc|_{Y}:v\mapsto(Y(A^\top)^{-1}Y^{-1})c(Y(A^\top)^{-1}Y^{-1})^\top.
  \] 
  \item If $A$ only has eigenvalues of absolute value strictly bigger that $1$, then $c$ has a unique fixed point $Y$ transversal to $0$. It satisfies $\beta(0,Y,\infty)=n$. The action of $c$ in $Y$ is contracting and the differential $dc|_{Y}$ acts as on $T_{Y}V$ as
  \[
     dc|_{Y}:v\mapsto (Y(A^\top)^{-1}Y^{-1})v(Y(A^\top)^{-1}Y^{-1})^\top.
  \]
  \end{enumerate} 
  Furthermore in both cases $Y$ is invertible and depends continuously on $c$. 
\end{lemma}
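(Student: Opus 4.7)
The plan is to reduce the fixed-point equation \eqref{EqFixedPoint} for \emph{invertible} $Y$ (which is precisely the transversality condition $Y \pitchfork 0$) to a discrete Lyapunov (Stein) equation. Substituting $Z = Y^{-1}$ and multiplying \eqref{EqFixedPoint} on both sides by $Y^{-1}$ and then by $A^\top$ on the left yields after a short computation
\[
A^\top Z A - Z = A^\top A + S.
\]
In case (i), the hypothesis $\sigma(A)\subset\{|\lambda|<1\}$ makes the Stein operator $Z\mapsto A^\top Z A - Z$ invertible on $\Sym_n(\R)$, and the classical Lyapunov theorem provides a unique symmetric solution
\[
Z = -\sum_{k\geq 0}(A^\top)^k(A^\top A + S)A^k,
\]
which is negative definite since $A^\top A + S$ is positive definite. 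Thus $Y = Z^{-1}$ exists, is the unique invertible fixed point, is negative definite (hence lies in $V$ and is transversal to both $0$ and $\infty$), and depends continuously on $(A,S)$ through the absolutely convergent series.

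Next I would compute the Maslov index and the differential. Since $Y$ is negative definite, $\sgn Y = -n$, so Example \ref{ExMaslovSp2n} gives $\beta(0,Y,\infty) = -n$, hence $\beta(Y,0,\infty) = n$ by antisymmetry. For the differential, the formula $dc|_Y(v) = (A - YC)v(CY + D)^{-1}$ from Lemma \ref{LemBlockForm}(iii), combined with the identity $Y(CY + D) = AY$ obtained by rearranging \eqref{EqFixedPoint} (so that $CY + D = Y^{-1}AY$ and $YC = A - YDY^{-1}$), leads to $A - YC = YDY^{-1} = Y(A^\top)^{-1}Y^{-1}$. Setting $M := Y(A^\top)^{-1}Y^{-1}$ and using symmetry of $Y$ to identify $(Y^{-1}AY)^{-1} = M^\top$, we obtain the announced expression $dc|_Y(v) = MvM^\top$. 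Because $M$ is similar to $(A^\top)^{-1}$, its eigenvalues lie outside the closed unit disk, so the symmetric tensor square $v\mapsto MvM^\top$ has all eigenvalues of absolute value $>1$, confirming that $c$ acts expandingly at $Y$.

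Case (ii) is entirely parallel: when $\sigma(A)\subset\{|\lambda|>1\}$, one rewrites the same Stein equation as
\[
(A^{-1})^\top Z A^{-1} - Z = -\bigl(I + (A^{-1})^\top S A^{-1}\bigr),
\]
and applies Lyapunov's theorem to the stable matrix $A^{-1}$ to conclude $Z > 0$, so $Y = Z^{-1}>0$ and $\beta(0,Y,\infty) = \sgn Y = n$. The differential formula is identical, but $M$ now has spectrum inside the open unit disk, so $c$ is contracting at $Y$.

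The only genuine insight needed is to spot the identity $CY + D = Y^{-1}AY$ hidden in the fixed-point equation; this makes the differential manifestly of Siegel-parabolic form $v\mapsto MvM^\top$, reflecting the fact that conjugating $c$ by any $h\in\Sp(2n,\R)$ with $h\cdot 0 = Y$ produces an element stabilising $0$. Everything else---uniqueness, invertibility, and continuous dependence on $c$---follows immediately from the Neumann-type series for $Z$.
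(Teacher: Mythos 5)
Your proposal is correct and follows essentially the same route as the paper: reduction of the fixed-point equation \eqref{EqFixedPoint} to the Stein equation $A^\top Y^{-1}A - Y^{-1} = A^\top A + S$, the explicit Neumann series for $Y^{-1}$ giving existence, definiteness, uniqueness and continuity, and the identities $A-YC = Y(A^\top)^{-1}Y^{-1}$, $(CY+D)^{-1}=Y^{-1}A^{-1}Y$ for the differential via Lemma \ref{LemBlockForm}(iii). Your write-up is in fact slightly more complete, since you verify the Maslov-index claim via Example \ref{ExMaslovSp2n} and spell out case (ii), both of which the paper leaves implicit.
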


\begin{proof} \begin{enumerate}
\item We are searching for a fixed point $Y$ transversal to $0$, hence we search for an invertible one. We can reformulate the fixed point equation \eqref{EqFixedPoint} to $A^\top Y^{-1}A-Y^{-1}=\bar S$, where $\bar S:=A^\top A+S$ is positive definite symmetric.

 One verifies easily that 
  \[
    Y^{-1}=-\sum_{i=0}^\infty (A^\top)^i \bar S A^i,
  \]
  is a solution, which is clearly negative definite. The sum converges since $A$ is contracting. Furthermore it is unique because the equation for $Y^{-1}$ is a linear matrix equation \cite[Ch.4.3]{Matrix} which has a unique solution if and only if for any eigenvalues $\lambda$ and $\mu$ of $A$, $\lambda\mu\neq 1$. Here this is clearly true by assumption. Furthermore $Y$ depends continuously on $c$.
  
From Lemma \ref{LemBlockForm} we know that if $Y$ is a fixed point of $c$, then:
\[
   dc|_Y(v)=(-YC+A)v(CY+D)^{-1}.
\]
For $Y=0$ we have $dc|_0(v)=AvA^\top$ and for general $Y$ we get, using the fixed point formula,
\begin{equation}\label{EqFP2}
  A-YC=Y(A^\top)^{-1}Y^{-1}\text{ and }(CY+D)^{-1}=Y^{-1}A^{-1}Y,
\end{equation}
hence 
\[
   dc|_Y(v)=(Y(A^\top)^{-1}Y^{-1})v(Y(A^\top)^{-1}Y^{-1})^\top.
\]
Therefore $c$ is expanding in $Y$.
  
  \item Analogously.
  \end{enumerate}
\end{proof}

\begin{proof}[Proof of Proposition \ref{PropContrFixedPoints}]
  We can assume that 
  \[
     A=
\left(
\begin{array}{cc}
	A_1&0\\
0&A_4
\end{array}\right),
  \]
such that the eigenvalues of the $k\times k$-matrix $A_1$ have absolute value strictly bigger than $1$ and the eigenvalues of $A_4$ have absolute value strictly less than $1$. We use Lemma \ref{LemContrExp} and Lemma \ref{LemBlockForm}  to construct the desired fixed points $X^+$ and $X^-$. By Lemma \ref{LemContrExp}
  \[
    \left(
\begin{array}{cc}
	A_1&0\\
A_1+(A_1^\top)^{-1}S_1&(A_1^\top)^{-1}
\end{array}\right) \in \Sp(2k,\R)
\]
and
\[
 \left(
\begin{array}{cc}
	A_4&0\\
A_4+(A_4^\top)^{-1}S_4&(A_4^\top)^{-1}
\end{array}\right)\in \Sp(2(n-k),\R)
  \]
  have fixed points $Y_1$ resp. $Y_4$ transversal to $0$ in their respective Shilov boundaries. By Lemma \ref{LemBlockForm} 
  \[
    y_1:=\left(
\begin{array}{cc}
	Y_1&0\\
  0&0
\end{array}\right),\quad y_4:=\left(
\begin{array}{cc}
	0&0\\
0&Y_4
\end{array}\right)
  \]
  are fixed points for $c$, and since $Y_1\in \Sym_k(\R)$ (with $k$ as above) and $Y_4 \in \Sym_{n-k}(\R)$ are invertible, $y_1-y_4$ is invertible, hence $y_1$ and $y_4$ are transversal. 
  
By Lemma \ref{LemBlockForm} (iii) we know that 
 \[
   d{c}|_{y_1}:v\mapsto (-y_1C+A)v(Cy_1+D)^{-1}.
 \] 
A straight forward calculation shows that 
\[
  -y_1C+A=\left(
\begin{array}{cc}
	A_1-Y_1C_1 &Y_1C_2\\
	0&A_4.
\end{array}\right)
\]
Since $Y_1$ is invertible and a fixed point for $c_1$ we get from the fixed point equation \eqref{EqFixedPoint}:
\[
A_1-Y_1C_1=Y_1^{-1}(A_1^\top)^{-1}Y_1,
\]
hence 
\[
  -y_1C+A=\left(
\begin{array}{cc}
	 Y_1^{-1}(A_1^\top)^{-1}Y_1&Y_1C_2\\
	0&A_4.
\end{array}\right)
\]
The eigenvalues are the eigenvalues of $(A_1^\top)^{-1}$ and $A_4$, hence it is contracting

The same calculation shows
\[
  (Cy_1+D)^{-1}=(-y_1C+A)^\top.
\]
  Hence $c$ acts contracting in $y_1$ Along the same lines one can show that $y_4$ is a repellent fixed point for $c$.
  
  Let $g$ be an isometry which maps $(0,\infty)$ to $(y_4,y_1)$. Then $gcg^{-1}=\left(
\begin{array}{cc}
	\bar A&0\\
	0&(\bar A^\top)^{-1}
\end{array}\right)$ where $\bar A$ is a conjugate to $-y_1C+A$, hence contracting. Therefore $gcg^{-1}$ contracts $V$ to $0$ and since $V$ can be seen as an open and dense subset of the Shilov boundary $\check S$ of the bounded symmetric space associated with $\Sp(2n,\R)$, this finishes the proof (c.f. Remark \ref{RemVopendense}).
\end{proof}

\subsubsection{Proof of Proposition \ref{PropNormalForm} for $S$-parabolic Isometries}\label{SecPar}
\begin{definition}
 Let $G$ be a Hermitian Lie group. Then $g\in G$ is \emph{Shilov-parabolic} or $S$-parabolic if $g$ has a unique fixed point in $\check S$. 
\end{definition}

\begin{proposition}\label{PropPar}
 Let $c$ be as in \eqref{EqStandard}. Assume $\sigma(A)\subset S^1$. Then  $c$ is S-parabolic.
 \end{proposition}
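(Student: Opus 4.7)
The plan is to show that $Y=0$ is the unique fixed point of $c$ in $\check S$. Remark \ref{RemFPV} already confines every fixed point to $V=\Sym_n(\R)$, and $Y=0$ is plainly one of them; I aim to derive a contradiction from the existence of a second one.

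Suppose $Y\neq 0$ is another fixed point. Using Remark \ref{RemOEDiag} I conjugate $c$ and $Y$ simultaneously by a block-diagonal element $\mathrm{diag}(k,k)\in \Sp(2n,\R)$ with $k\in\On(n)$, so that $Y$ takes the form $\mathrm{diag}(Y_1,0)$ with $Y_1\in\Sym_l(\R)$ invertible for some $1\leq l\leq n$. This preserves $\sigma(A)\subset S^1$, the special shape \eqref{EqStandard} of $c$, and the positive-definiteness of $S$. Lemma \ref{LemBlockForm}(ii) then forces $A$ into block upper-triangular form with top-left block $A_1$ of size $l$, and identifies $Y_1$ as an invertible fixed point of the smaller symplectic element
\[
d=\begin{pmatrix} A_1 & 0 \\ A_1+(A_1^\top)^{-1}S_1 & (A_1^\top)^{-1} \end{pmatrix},
\]
where $S_1$ is the top-left $l\times l$ block of $S$, still symmetric positive definite as a principal submatrix of $S$. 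In particular $\sigma(A_1)\subset\sigma(A)\subset S^1$.

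Next I rewrite the fixed-point equation \eqref{EqFixedPoint} for $d$ at the invertible $Y_1$. Multiplying left and right by $Y_1^{-1}$ and then left by $A_1^\top$ yields the Stein-type identity
\[
A_1^\top Y_1^{-1} A_1 - Y_1^{-1} = A_1^\top A_1 + S_1,
\]
whose right-hand side is symmetric positive definite. Since $\sigma(A_1)\subset S^1$, pick $\lambda\in\sigma(A_1)$ with $|\lambda|=1$ and a nonzero eigenvector $v\in\C^l$, $A_1 v=\lambda v$. As $A_1$ is real, $v^* A_1^\top=\bar\lambda v^*$, so testing the identity against $v$ gives
\[
v^*(A_1^\top A_1+S_1)v = (|\lambda|^2-1)\,v^*Y_1^{-1}v = 0,
\]
contradicting positive-definiteness of $A_1^\top A_1+S_1$.

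The only truly delicate step is the block reduction: it has to be realized by a conjugation that lives inside $\Sp(2n,\R)$ and preserves the special shape \eqref{EqStandard}, so that Lemma \ref{LemBlockForm}(ii) can be applied; this is exactly the content of Remark \ref{RemOEDiag}. Everything else is the standard spectral obstruction to a discrete Lyapunov/Stein equation with definite right-hand side when the dynamics has spectrum on the unit circle. Together these will yield that $Y=0$ is the only fixed point of $c$ in $\check S$, i.e. that $c$ is $S$-parabolic.
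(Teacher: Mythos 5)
Your proof is correct and follows essentially the same route as the paper's: reduce to a diagonal invertible block $Y_1$ via an $\On(n)$-conjugation (Remark \ref{RemOEDiag}), apply Lemma \ref{LemBlockForm}(ii), derive the Stein identity $A_1^\top Y_1^{-1}A_1-Y_1^{-1}=A_1^\top A_1+S_1$, and test it against an eigenvector of a unit-modulus eigenvalue to contradict positive definiteness. No gaps.
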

\begin{proof} First note that by Remark \ref{RemFPV} all fixed points of $c$ are in $V$. So let $Y\in V$ be a fixed point. 
  Since $Y$ is a symmetric matrix we can assume without loss of generality that 
  \[
    Y=  \left(
\begin{array}{cc}
	Y_1&0\\
	0&0
\end{array}\right)
  \]
  such that $Y_1$ is a square matrix and diagonal invertible. 
Now we can apply Lemma \ref{LemBlockForm} (ii). Therefore $A$ decomposes into a block form and the eigenvalues of $A_1$ also have absolute value $1$.  Multiplying both sides of the fixed point equation \eqref{EqFixedPoint} for $Y_1$ from the left with $A_1^\top Y_1^{-1}$ and from the right with $Y_1^{-1}$ (which we are allowed to, since $Y_1$ was chosen to be invertible), we get

 \begin{equation}\label{EqPosDef}
  A_1^\top Y_1^{-1} A_1 -Y_1^{-1}=A_1^\top A_1 +S_1.  
 \end{equation}
Choose an eigenvector $v\neq 0$ for a (possibly complex) eigenvalue $\lambda$ of $A_1$ with $|\lambda |=1$. Since $A_1$ is a real matrix, we have $A_1^\top=A^*$ and hence
  \begin{align*}
   v^*A_1^\top Y_1^{-1} A_1v -v^*Y_1^{-1}v=&v^*A_1^* Y_1^{-1} A_1v -v^*Y_1^{-1}v\\ 
   =&\underbrace{\bar \lambda  \lambda}_{=1} v^*Y_1^{-1}v  -v^*Y_1^{-1}v=0,
 \end{align*}
 which shows that the left hand side of \eqref{EqPosDef} is indefinite. But the right hand side is positive definite so we get a contradiction. Hence $0$ is the only fixed point of $c$. 
\end{proof}

\subsubsection{Proof of Proposition \ref{PropNormalForm}}\label{SecProofNormalForm}

We have proven the Proposition \ref{PropNormalForm} for  $S$-hyperbolic $c$ in Section \ref{SecHyp} and for $S$-parabolic $c$ in Section \ref{SecPar} and use this to write down the desired fixed point explicitly. Note that we use here that there are $\Sp(2n,\R)$ contains copies of $\Sp(2k,\R)$ for $k\leq n$.  This statement does not hold in an analogous form for other Hermitian Lie groups (e.g. the exceptional one), whence this proof can not be generalized one-to-one.
\begin{proof}[Proof of Proposition \ref{PropNormalForm}]
(i) follows immediately from Proposition \ref{PropPar}.

For (ii) we have to combine methods from the last two subsections. 
  As in the proof of Proposition  \ref{PropContrFixedPoints} we can assume that $A$ is of block form $\left(
\begin{array}{cc}
	A_1&0\\
	0&A_4
\end{array}\right)$ where the eigenvalues of $A_2$ all have absolute value $1$ and the absolute values of the eigenvalues of $A_1$ are different from $1$. If $A_1$ is a $k\times k$ matrix, we denote by $C_1$ the upper left $k \times k$ block in the $n\times n$-matrix $A+(A^\top)^{-1}S$. Then $C_1=A_1+(A_1^\top)^{-1} S_1$, where $S_1$ is the upper left $k\times k$ block of the right size of $S$; it is automatically symmetric positive definite. Then 
\[
 c_1= \left(
\begin{array}{cc}
	A_1&0 \\
	C_1&(A_1^\top)^{-1}
\end{array}\right)\in \Sp(2k,\R)
\]
is by construction hyperbolic. Hence it has a unique fixed point $Y_1$ in $V_k=\Sym(k,\R)$ in which the action of $c_1$ is contracting. By Lemma \ref{LemBlockForm}
$Y=\left(\begin{array}{cc}
	Y_1&0 \\
	0&0
\end{array}\right)\in V$ is a fixed point of $c$ and $c$ acts non-expandingly in $X$.

Now it remains to show that this is the unique fixed point with this property. After conjugating $c$ with $g=\left(
\begin{array}{cc}
	1 &-Y \\
	0&1
\end{array}\right),
$
we can assume that $0$ is a non-repellent fixed point. Let $\bar Y$ be another fixed point. Again, after eventual conjugation with an isometry $h\in \On(n)$ (which stabilizes $0$) we can assume that $\bar Y=\left(\begin{array}{cc}
	\bar Y_1&0 \\
	0&0
\end{array}\right) $, with $\bar Y_1$ invertible. By Lemma \ref{LemBlockForm} (ii)
\[
  hgc_1(hg)^{-1}=\left(
\begin{array}{cc}
	\bar A &0 \\
	\bar C& (\bar A^\top)^{-1}
\end{array}\right),
\] 
with
\[
  \bar A=\left(
\begin{array}{cc}
	\bar A_1 &\bar A_2 \\
	0& \bar A_4
\end{array}\right)\text{ and }\bar C=\left(
\begin{array}{cc}
	\bar C_1 &\bar C_2 \\
	\bar C_3& \bar C_4
\end{array}\right)
\] 
and $\bar Y_1$ is a fixed point of $\bar c_1=\left(
\begin{array}{cc}
	\bar A_1 & 0 \\
	\bar C_1& (\bar A_1^\top)^{-1}
\end{array}\right)$. Since $\bar Y_1$ is invertible we can calculate $d\bar c_1|_{\bar Y_1}$ as in the proof of Lemma \ref{LemContrExp} and we get
\[
  d\bar c_1|_{\bar Y_1}:v\mapsto(\bar Y_1(A_1^\top)^{-1}\bar Y_1^{-1})c(\bar Y_1(A^\top)^{-1}\bar Y_1^{-1})^\top
\]
and the action of $\bar c_1$ in $\bar Y_1$ is expanding. Hence $c$ has at least one expanding direction in any fixed point different from $0$ and $0$ is the only non-repellent fixed point. This finishes the proof.
\end{proof}

\section{Gluing}\label{SecGlue}
\subsection{Gluing in $\Sp(2n,\R)$}\label{SecGlueSp2nR}
In this section we give more details on the gluing construction and prove Theorem \ref{PropGlueSp2nR} below, which is an extension of Theorem \ref{PropGlueSp2nRIntro}.

There are two gluing constructions: gluing two surfaces and closing handles. 
\begin{figure}[ht]
\includegraphics[width=\textwidth]{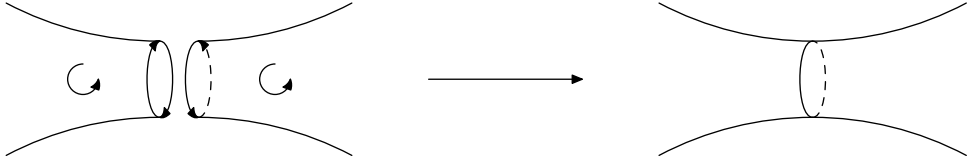}
\caption{Gluing two surfaces.}
\label{GluingIntro}
\end{figure}

\begin{figure}[ht]
\centering
\includegraphics{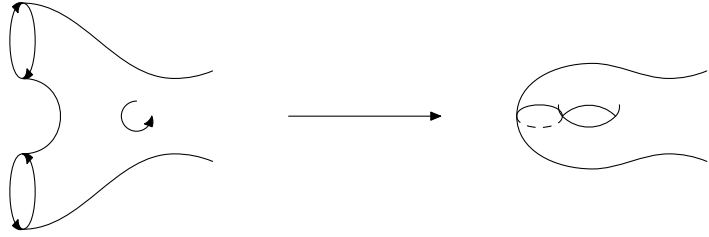}
\caption{Closing a handle.}
\label{ClosingIntro}
\end{figure}

To obtain an oriented surface whose orientation is compatible with the building blocks the gluing homeomorphism has to be orientation reverting (see Figures \ref{GluingIntro} and \ref{ClosingIntro}) along the boundary curves, which we denote by $C$ and $\bar C$.

Let $\Sigma$ and $\bar \Sigma$ be surfaces and $C\subset \Sigma$ resp. $\bar C\subset \bar \Sigma$ boundary components. Let $\varrho:\pi_1(\Sigma)\rightarrow G$ and $\bar \varrho:\pi_1(\bar \Sigma)\rightarrow G$ representations into some group $G$. Let $\Sigma'$ be the surface obtained by gluing $\Sigma$ and $\bar \Sigma$ along $C$ and $\bar C$. Recall that a representation $\varrho':\pi_1(\Sigma')\rightarrow G$ is said to be \emph{glued} from $\varrho$ and $\bar \varrho$ if $\varrho'|_{\pi_1(\Sigma)}=\varrho$ and $\varrho'|_{\pi_1(\bar \Sigma)}=\bar \varrho$. Since we glue orientation preservingly we identify $C$ with the inverse of $\bar C$ (seen as elements of $\pi_1(\Sigma')$). Therefore $\varrho$ and $\bar \varrho$ can be glued along $C$ and $\bar C$ if and only $\varrho(C)=\bar\varrho(\bar C)^{-1}$. The fundamental group of $\Sigma'$ is an amalgam of the ones of $\Sigma$ and $\bar \Sigma$.

Conjugacy classes $[\varrho]$ and $[\bar \varrho ]$ of representations can be glued if and only $\varrho(C)$ and $\bar\varrho(\bar C)^{-1}$ are conjugate.

Analogously one can show that closing a handle for a surface $\bar \Sigma$ along $C_1$ and $C_2$ with a resulting surface $\Sigma$ corresponds to an HNN extension:
\[
  \pi_1(\Sigma)=\langle \pi_1(\bar \Sigma),t| C_2^{-1}=tC_1t^{-1} \rangle.
\] 
Again this shows that we can close a handle if and only if $\varrho(C_1)^{-1}=g\varrho(C_2)g^{-1}$.

\begin{theorem}\label{PropGlueSp2nR}
 Let $\varrho:\pi_1(\Sigma) \rightarrow \Sp(2n,\R)$ and $\bar \varrho:\pi_1(\bar \Sigma) \rightarrow \Sp(2n,\R)$ be (non-necessarily distinct) maximal representation with distinct boundary components $C\subset \Sigma$ and $\bar C\subset \bar \Sigma$. 
\begin{enumerate}
\item
We can conjugate $\varrho$ and $\bar \varrho$ such that
  \begin{equation}\label{EqcNormalform2Intro}
  c:=\varrho(C)=\left(\begin{array}{cc}
	X&0\\
	X+(X^\top)^{-1}S &(X^\top)^{-1}
\end{array}\right)
\end{equation} 
and 
\begin{equation}
\bar c:=\bar \varrho(\bar C)=\left(
\begin{array}{cc}
	(\bar X^\top)^{-1}&-(\bar X^\top)^{-1}-\bar S\bar X \\
	0& \bar X
\end{array}\right)\end{equation}
with $X$ and $\bar X$ invertible and $S$ and $\bar S$ symmetric positive definite. 
\item 
The representations $\varrho$ and $\bar \varrho$ can be glued along $C$ and $\bar C$ if and only if $\varrho(C)$ and $\bar \varrho(\bar C)^{-1}$ are conjugate in $\Sp(2n,\R)$.
 \item  Suppose $X$ and $\bar X$ contracting. 
Then $\bar c$ and $c^{-1}$ are conjugate in $\Sp(2n,\R)$ if and only $X^\top$ and $\bar X$ are conjugate in $\GL(n,\R)$. If $ \bar X=G X^\top G^{-1}$, then $\bar c=gc^{-1}g^{-1}$ with
  \[
  g=\left(\begin{array}{cc}
	\bar Y GY^{-1}-(G^\top)^{-1}& -\bar YG\\
	GY^{-1} & -G
\end{array}\right),
\]  
where \[ 
Y=-\left(\sum_{i=0}^\infty (X^\top)^i (X^\top\cdot X+S)X_1^i \right)^{-1}
\]
and 
\[
 \bar Y=\sum_{i=0}^\infty (\bar X^\top)^i(I+\bar X^\top \bar S \bar X)\bar X^i.
\]  

\item It $X$ or $\bar X$ has an eigenvalue of absolute value $1$, then $\bar c$ and $c^{-1}$ are not conjugate in $\Sp(2n,\R)$.
\end{enumerate}
\end{theorem}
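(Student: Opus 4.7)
The plan is to prove the four parts in order, using the normal forms and dynamical machinery built up in earlier sections.

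Part (i) follows from Theorem \ref{MainThmIntro}: any boundary component $C$ of $\Sigma$ can be taken as the curve $C_1$ (or $C_3$) of a pair of pants adjacent to it in some pants decomposition, so after conjugation $\varrho(C)$ takes the $c_1$-form (upper triangular) or the $c_3$-form (lower triangular). Choosing the first option for $\varrho(C)$ and the second for $\bar\varrho(\bar C)$ yields the desired presentation, with $S = X_3(X_2^\top)^{-1}X_1$ symmetric positive definite by maximality. Part (ii) is the standard amalgamation/HNN criterion: $\pi_1(\Sigma')$ is an amalgamated product (respectively an HNN extension), and a glued representation exists iff the two boundary holonomies can be made equal after conjugating one side, which on conjugacy classes becomes precisely the statement that $\varrho(C)$ and $\bar\varrho(\bar C)^{-1}$ are $\Sp(2n,\R)$-conjugate.

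The core of the proof is part (iii). Since $X$ and $\bar X$ are contracting, Lemma \ref{LemContrExp} applied to $c$ and to the $J$-conjugate of $\bar c$ (where $J$ exchanges the standard Lagrangian pair) produces unique transverse fixed point pairs: $c$ has attracting fixed point $0$ and repelling fixed point $Y$, while $\bar c$ has attracting fixed point $\infty$ and repelling fixed point $\bar Y$, with $Y$ and $\bar Y$ given by the stated convergent series. For any conjugating element $g$ with $g c^{-1} g^{-1} = \bar c$, matching attracting-to-attracting and repelling-to-repelling fixed points forces $g(Y)=\infty$ and $g(0)=\bar Y$. Writing $g$ in $2\times 2$ block form and imposing both these constraints together with the symplectic relations determines three of the four blocks in terms of the remaining one; setting that remaining block equal to $G Y^{-1}$ for a parameter $G \in \GL(n,\R)$ yields exactly the formula in the statement. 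Substituting into $gc^{-1} = \bar c g$ and comparing blocks, the fixed-point equations for $Y$ and $\bar Y$ cause the symmetric contributions to cancel, and what remains reduces to the single matrix relation $G X^\top = \bar X G$. The converse is then a direct verification: check that the prescribed $g$ is symplectic (using the symmetry of $Y$ and $\bar Y$) and that it actually conjugates $c^{-1}$ to $\bar c$.

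I expect part (iv) to be the main obstacle, since the argument splits according to how the unit-modulus eigenvalues sit. In the easier subcase, where (say) $\bar X$ is strictly contracting while $X$ has some eigenvalue $\lambda$ with $|\lambda|=1$, the obstruction is purely dynamical: by Proposition \ref{PropNormalForm} every fixed point of $c$ inherits an eigenvalue of absolute value one in its tangent map, hence so does $c^{-1}$, whereas $\bar c$ has the fixed point $\infty$ whose tangent map is strictly contracting, and no conjugation can reconcile these. The delicate case is when both $X$ and $\bar X$ have eigenvalues on $S^1$. Here already the model example $n=1$, $X=\bar X=\pm 1$ is telling: both $c^{-1}$ and $\bar c$ become $\pm I$ times a unipotent element, but the positivity of $S$ and $\bar S$ combined with the direction-swap between the two triangular normal forms forces the nilpotent perturbations to acquire opposite signs once conjugated to a common parabolic, and the two conjugacy classes are thereby distinguished. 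In general this sign becomes the signature of a quadratic form attached to $S$ (respectively $\bar S$) on the generalized eigenspaces for $S^1$-eigenvalues; this signature flips in passing from $c$ to $c^{-1}$ but not from $\bar c$ to itself, and making the flip rigorous across arbitrary Jordan block patterns is the most technical ingredient.
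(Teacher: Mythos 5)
Parts (i)--(iii) of your proposal follow essentially the paper's route: (i) via Theorem \ref{MainThmIntro} applied to an adjacent pair of pants, (ii) via the amalgam/HNN description of $\pi_1$ of the glued surface, and (iii) via the unique transverse attracting/repelling fixed-point pairs from Lemma \ref{LemContrExp}, which force any conjugator to match $(Y,0)\mapsto(\infty,\bar Y)$ and hence to differ from an explicit map only by an element of the stabilizer of $(0,\infty)$, i.e.\ a block-diagonal $\operatorname{diag}(G,(G^\top)^{-1})$; that is exactly the paper's factorization $g=g_1g_2g_3$. These parts are fine.

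The genuine gap is in part (iv), precisely where you flag it yourself. Your ``delicate case'' (both $X$ and $\bar X$ with unit-modulus eigenvalues) is not actually proved: the claim that a signature attached to $S$ on the generalized eigenspaces for $S^1$-eigenvalues ``flips in passing from $c$ to $c^{-1}$ but not from $\bar c$ to itself'' is asserted, not established, and you concede that making it rigorous across arbitrary Jordan block patterns is open. No such case analysis is needed. The paper's argument is uniform and short: put both $c$ and $\bar c$ in lower-triangular form with $X^{-1}$ and $\bar X$ non-expanding, so that $0$ is, by Proposition \ref{PropNormalForm}, the \emph{unique} fixed point of both $c^{-1}$ and $\bar c$ at which the differential is non-expanding; any conjugator $g$ with $g\bar c g^{-1}=c^{-1}$ must therefore fix $0$ and hence be block lower-triangular, say $g=\left(\begin{smallmatrix}A&0\\ C&(A^\top)^{-1}\end{smallmatrix}\right)$. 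Comparing diagonal blocks gives $A\bar XA^{-1}=X^{-1}$ (so $X^{-1}$ has a unit-modulus eigenvalue whichever of $X,\bar X$ does), and comparing the remaining block yields, after writing $g$ as a unipotent times a diagonal, an identity of the form
\[
  (X^\top)^{-1}\bar C X^{-1}-\bar C+(X^\top)^{-1}M'=-(X^\top)^{-1}M^\top ,
\]
with $(X^\top)^{-1}M'$ positive definite and $-(X^\top)^{-1}M^\top$ negative definite. Evaluating the associated Hermitian form on an eigenvector $v$ of $X^{-1}$ for an eigenvalue $\lambda$ with $|\lambda|=1$ kills the first two terms, since $v^*(X^*)^{-1}\bar C X^{-1}v=|\lambda|^2\,v^*\bar C v=v^*\bar Cv$, and leaves a strictly positive quantity equal to a strictly negative one. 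This single computation replaces both of your subcases; your ``easier subcase'' sketch would also need the uniqueness statement of Proposition \ref{PropNormalForm} spelled out, but it is subsumed by the above. As written, your part (iv) does not constitute a proof.
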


\begin{proof}[Proof of Theorem \ref{PropGlueSp2nR}]
  \begin{enumerate}
  \item Follows from Theorem \ref{MainThmIntro}. 
  
  \item Follows from the discussion in the beginning of Section \ref{SecGlueSp2nR}.
  
  \item First note that since $c$ and $\bar c$ are $S$-hyperbolic, they have fixed points $Y$ resp. $\bar Y$ which are transversal to $0$ resp. $\infty$ (Lemma \ref{LemContrExp}. We are searching for $g$ with $\bar c=gc^{-1}g^{-1}$. We want to write $g=g_1g_2g_3$, where the $g_i$ have the following properties:  $g_3$ maps the transverse pair $(0,Y)$ to $(0,\infty)$, $g_2$ fixes $(0,\infty)$ and $g_1$ maps the transverse pair $(0,\infty)$ to $(\infty,\bar Y)$. We choose
  \[
    g_1:=\left(\begin{array}{cc}
	\bar Y&-1\\
	1 &0
\end{array}\right)
  \]
  and
    \[
    g_3:=\left(\begin{array}{cc}
	Y^{-1}&-1\\
	1 &0
\end{array}\right).
  \]
  Then 
  \[
    g_1^{-1}\bar cg_1= \left(\begin{array}{cc}
	\bar X &\\
	 &(\bar X^\top)^{-1}
\end{array}\right)
  \]
  and
   \[
    g_3 c ^{-1}g_3^{-1}= \left(\begin{array}{cc}
	X^\top&\\
	 &X^{-1}
\end{array}\right)  \] 
  By assumption $\bar X$ as well as $X^\top$ are contracting. Hence, if there exists $g_2$ such that $g_1^{-1}\bar cg_1=g_2 g_3 c^{-1}g_3^{-1} g_2^{-1}$, then $g_2$ has to stabilize the pair $(0,\infty)$, i.e.
  \[
    g_2=   \left(\begin{array}{cc}
	G&\\
	 &(G^\top)^{-1}
\end{array}\right) 
  \]
   In particular there has to be a $G\in \GL(n,\R)$ such that $\bar X=G X^\top G^{-1}$. Then $g=g_1g_2g_3$.

    \item  From Theorem \ref{PropGlueSp2nRIntro} (i) we know that we can assume
      \[
   c=\left(
\begin{array}{cc}
	X&0 \\
	M&(X^\top)^{-1}
\end{array}\right),\quad \bar c=\left(
\begin{array}{cc}
	\bar X &0 \\
	\bar M&(\bar X^\top)^{-1}
\end{array}\right), \quad 
\]
       As explained in the second part of the proof of Theorem \ref{MainThmIntro} we can assume that $X^{-1}$ and $\bar X$ are non-expanding. Then $0$ is the unique fixed point for $c^{-1}$ and $\bar c$ where the differential is non-expanding. Hence if there exists $g\in \Sp(2n,\R)$ with $g\bar cg^{-1}=c^{-1}$, then $g$  has to fix $0$. Assume
       \[
g=\left(
\begin{array}{cc}
	A&0 \\
	X&(A^\top)^{-1}
\end{array}\right).
  \]
Assume that $g\bar cg^{-1}=c^{-1}$. Then $A \bar XA^{-1}=X^{-1}$. This is a first condition for $c^{-1}$ and $\bar c$ to be conjugate. If $\bar X$ and $X^{-1}$ are not conjugate, we are done. 
  
Now assume that $A\bar XA^{-1}=X^{-1}$. Then we can write 
\[
  g=\left(\begin{array}{cc}
	A&0 \\
	C&(A^\top)^{-1}
\end{array}\right)=\left(\begin{array}{cc}
	1&0 \\
	CA^{-1}&1
\end{array}\right)\left(\begin{array}{cc}
	A&0 \\
	0&(A^\top)^{-1}
\end{array}\right).
\]
Define $\bar C:=CA^{-1}$ and $M':= (A^\top)^{-1}\bar MA^{-1}$. Recall $A\bar XA^{-1}=X^{-1}$ and $(X^\top)^{-1} M'$ is symmetric and positive definite. 

We can summarize that to the equation
\begin{align*}
  g\bar cg^{-1}=&\left(\begin{array}{cc}
	1&0 \\
	\bar C&1
\end{array}\right)
\left(
\begin{array}{cc}
	X^{-1}&0 \\
	 M' &X^\top
\end{array}\right)
\left(\begin{array}{cc}
	1&0 \\
	-M'&1
\end{array}\right)\\
=&\left(
\begin{array}{cc}
	X^{-1}&0 \\
	\bar CX^{-1}+M' -X^\top \bar C &\bar X^\top
\end{array}\right)
\overset{!}{=}c^{-1}=\left(
\begin{array}{cc}
	X^{-1}&0 \\
	-M^\top &X^\top
\end{array}\right).
\end{align*}
In particular: $\bar CX^{-1}+ M' -X^\top \bar C=-M^\top$, which is equivalent to 
\[
  (X^\top)^{-1}\bar CX^{-1}-\bar C+(X^\top)^{-1}M'=-(X^\top)^{-1}M^\top.
\]
Note that by construction $(X^\top)^{-1}\bar M$ is positive definite, hence $-(X^\top)^{-1}M^\top$ is negative definite. Let $\lambda$ be an eigenvalue  of $X^{-1}$  with $|\lambda |=1$ and let $v$ be a non-zero eigenvector for $\lambda$. Such an eigenvalue exists by assumption. Then 
\begin{align*}
    v^*\big((X^*)^{-1}C'X^{-1}-C'+(X^*)^{-1} M'\big)v=&v^* (X^*)^{-1} M' v\\
    =&-v^* (X^*)^{-1}M^* v,
\end{align*}
which is a contradiction since the left hand side is strictly positive and the right hand side is strictly negative. Therefore $c^{-1}$ and $\bar c$ cannot be conjugate.
  \end{enumerate}
\end{proof}

\subsection{The Gluing Graph}\label{SecGraph}
To be able to state coordinates for more general surfaces with need to encode the gluing involving several pairs of pants and handles in a clear way.

Let $\Sigma_{g,m}$ be the topological surface with genus $g$ and $m\geq 1$ boundary components and $\chi(\Sigma_{g,m})<0$. It can be build using $2g-2+m$ pairs of pants.	

This gluing can be visualized in a \emph{gluing graph}. Given $\Sigma_{g,m}$ with a decomposition into pairs of pants. We construct the gluing graph for this decomposition as follows: we represent any pair of pants and any boundary component by a vertex. We add an edge between two pairs of pants with a common boundary component for each common boundary component. Furthermore we join every pair of pants with the vertices associated with its boundary components. Note that these graphs are connected.

 Here are some examples:
\begin{center}
\begin{tabular}[ht]{C{4cm}C{4cm}}
  \includegraphics[width=2cm]{pairofpants.eps} & \includegraphics[width=2cm]{graphen.1}\\
    \includegraphics[width=3cm]{s11.3}& \includegraphics[width=3cm]{graphen.2}\\
  \includegraphics[width=4.5cm]{s20.3}& \includegraphics[width=3cm]{graphen.4}
\end{tabular}
\end{center}
 Clearly the graph depends on the decomposition into pairs of pants.
\begin{center}
\begin{tabular}{C{4cm}C{4cm}}
  \includegraphics[width=4.5cm]{s20.2}&\includegraphics[width=3cm]{graphen.3}\\
\end{tabular}
\end{center}

To parametrize maximal representations of $\Gamma_{g,m}$  we will label the gluing graph with the length and twist parameters. 

  \begin{figure}[ht]
  \centering
  \includegraphics{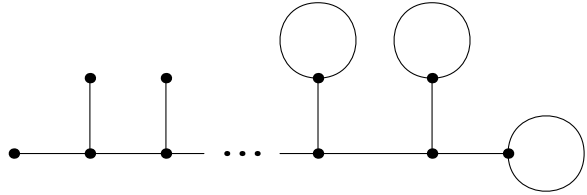}
\caption{Standard graph}
\label{GrStandardGraph}
\end{figure}

\begin{definition}\label{DefStandardGraph}
  Let $\Sigma_{g,m}$ be a surface with negative Euler characteristic. Then we call the  decomposition into pairs of pant as in Figure \ref{GrStandardGraph}
   \emph{standard decomposition}. This graph is the \emph{standard graph}.
\end{definition}

\section{Coordinates for Maximal Representations}\label{SecMoreParam}
We use the gluing graph introduced in the previous section to state general parameters for $\Rep_{max}(\Gamma_{g,m},\Sp(2n,\R))$. In Section \ref{Sec2Pairsofpants} we parametrize maximal representations of $\Gamma_{0,3}$, $\Gamma_{1,1}$, $\Gamma_{0,4}$, $\Gamma_{1,2}$ and $\Gamma_{2,0}$. In Section \ref{SecParamGen} we define the coordinates for the most general case $\Gamma_{g,m}$.

\begin{remark}\label{RemTwist}
   The length parameters from Theorem \ref{MainThmIntro} are only unique up to conjugation with an element of $\On(n)$. For the gluing of two representations we have to choose representatives from these equivalence classes and glue them. The conjugation class of the resulting representation does not depend on this choice. Indeed, replace $X$ and $S$ by $kXk^{-1}$ and $kSk^{-1}$ resp. $\bar X$ and $\bar S$ by $\bar k \bar X \bar k^{-1}$ and $\bar k \bar S \bar k^{-1}$ and the $G$ is replaced by $\bar k G k^{-1}$, and the resulting representation from both sets of parameters are conjugate.  
\end{remark}

Recall
\[
  B=\{X\in \GL(n,\R) | X \text{ contracting} \}.
\]
  and
  \begin{align*}
     R:= \{(X_1,X_2,X_3)\in \bar B^3 |& X_3(X_2^\top)^{-1}X_1 \text{ is symmetric}\\ 
             &\text{and positive definite}\}.
  \end{align*}

\subsection{Surfaces obtained from one or two Pairs of Pants}\label{Sec2Pairsofpants}
In Theorem \ref{MainThmIntro} we identified $\Rep_{max}(\Gamma_{g,m},\Sp(2n,\R))$ and $R/\On(n)$.

Recall $\Gamma_{1,1}=\langle A,B,C| [A,B]C\rangle$.
\begin{proposition}\label{PropG11}
 There exists a bijection between 
 \[
   \big\{ (X_1,X_2,G)\in \GL(n,\R)^3 | X_1\in B, ~ (X_1,X_2,GX_1^\top G^{-1})\in R \big\}/\On(n), 
 \]
 and $\Rep_{max}(\Gamma_{1,1},\Sp(2n,\R))$ induced by the map which assigns to $(X_1,X_2,G)$ the representation $\varrho$ defined by:
  \begin{align*}
  \varrho(A)&=\left(\begin{array}{cc}
	X_1&0\\
	X_1+(X_1^\top)^{-1}S &(X_1^\top)^{-1}
\end{array}\right),\\
  \varrho(B)&=\left(\begin{array}{cc}
	\bar Y GY^{-1}-(G^\top)^{-1}& -\bar YG\\
	GY^{-1} & -G
\end{array}\right),\\
\varrho(C)&=\left(
\begin{array}{cc}
  C_1 &C_2\\
  C_3 & C_4
\end{array}\right)
\end{align*}
with 
\begin{align*}
	C_1=&-(X_2^\top)^{-1}X_1(S^\top)^{-1} X_1^\top-X_2-(X_2^\top)^{-1}\\
   C_2=& X_2 + (X_2^\top)^{-1}X_1(S^\top)^{-1} X_1^\top\\
	C_3 =&-(X_2^\top)^{-1}X_1(S^\top)^{-1} X_1^\top - (X_2^\top)^{-1}\\
    C_4=&(X_2^\top)^{-1}X_1(S^\top)^{-1} X_1^\top\\
    S=&X_1^\top X_2^{-1}(G^\top)^{-1}X_1G^\top\\
    Y=&-\left(\sum_{i=0}^\infty (X_1^\top)^i (X_1^\top\cdot X_1+S)X_1^i \right)^{-1}\\
    \bar Y=&(G^\top)^{-1}\left(\sum_{i=0}^\infty (X_1)^i (G^\top \cdot G+X_1G^\top S^{-1}GX^\top)(X_1^\top)^i\right)G^{-1}.
\end{align*}
\end{proposition}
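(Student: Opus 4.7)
The plan is to realize $\Gamma_{1,1}$ as an HNN extension of the pair-of-pants group $\Gamma_{0,3}$, apply Theorem \ref{MainThmIntro} on the pair-of-pants factor, and read off the gluing/twist data from Theorem \ref{PropGlueSp2nR}. Cutting the one-holed torus along the simple closed curve representing $A$ produces a pair of pants; I identify its fundamental group with $\Gamma_{0,3} = \langle C'_1, C'_2, C'_3 \mid C'_3 C'_2 C'_1 \rangle$ by setting $C'_1 = A$, $C'_2 = C$, and $C'_3 = BA^{-1}B^{-1}$, so that the pair-of-pants relation $C'_3 C'_2 C'_1 = 1$ becomes exactly $[A,B]C = 1$. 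Hence $\Gamma_{1,1}$ is the HNN extension of $\Gamma_{0,3}$ with stable letter $B$ satisfying $\varrho(C'_3) = \varrho(B)\,\varrho(C'_1)^{-1}\,\varrho(B)^{-1}$.

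Starting from a maximal $\varrho\colon \Gamma_{1,1}\to \Sp(2n,\R)$, the restriction $\varrho|_{\Gamma_{0,3}}$ is maximal by additivity of the Toledo invariant (Theorem \ref{PropPropToledo}(vi)), so Theorem \ref{MainThmIntro} allows me to conjugate $\varrho$, uniquely up to $\On(n)$, so that $\varrho(C'_i) = c_i$ in the normal form associated to some $(X_1, X_2, X_3) \in R$. The HNN relation then says $c_3$ is conjugate to $c_1^{-1}$; Theorem \ref{PropGlueSp2nR}(iv) forces $X_1$ and $X_3$ to be contracting, and Theorem \ref{PropGlueSp2nR}(iii) produces $G \in \GL(n,\R)$ with $X_3 = G X_1^\top G^{-1}$ together with the explicit conjugating matrix $g$ displayed in the proposition. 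Setting $\varrho(B) := g$ both extends the pair-of-pants data $(X_1, X_2, G)$ to a representation of $\Gamma_{1,1}$ and produces the inverse of the claimed map. The $\On(n)$-action $(X_1, X_2, G) \mapsto (kX_1 k^{-1}, kX_2 k^{-1}, kGk^{-1})$ corresponds to conjugation of $\varrho$ by $\mathrm{diag}(k, k) \in \Sp(2n,\R)$, so the construction descends to the quotient.

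The main obstacle will be matching the two families of explicit formulas. Substituting $X_3 = G X_1^\top G^{-1}$ into the lower-left block $X_1 + X_2^{-1} X_3^\top$ of $c_1$ and equating with $X_1 + (X_1^\top)^{-1} S$ from Theorem \ref{PropGlueSp2nR}(i) yields $S = X_1^\top X_2^{-1} (G^\top)^{-1} X_1 G^\top$, as stated; reading the upper-right block of $c_3$ analogously gives $\bar S = X_1^{-1} X_2^\top X_3^{-1}$, and the symmetry $X_1^\top X_2^{-1} X_3^\top = X_3 (X_2^\top)^{-1} X_1$ built into the definition of $R$ forces the clean identity $\bar S = S^{-1}$. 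To convert the series for $\bar Y$ from Theorem \ref{PropGlueSp2nR}(iii) into the form written in the proposition, I use $\bar X^i = G (X_1^\top)^i G^{-1}$ and $(\bar X^\top)^i = (G^\top)^{-1} X_1^i G^\top$ to pull $(G^\top)^{-1}$ and $G^{-1}$ out of the geometric series; a short manipulation then rewrites $G^\top (I + \bar X^\top \bar S \bar X) G$ as $G^\top G + X_1 G^\top S^{-1} G X_1^\top$, using $\bar S = S^{-1}$. The remaining entries of $\varrho(A)$, $\varrho(B)$, $\varrho(C)$ are then direct substitutions into the formulas of Theorem \ref{MainThmIntro} and Theorem \ref{PropGlueSp2nR}(iii).

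Injectivity of the induced quotient map is the last step. If two triples $(X_1, X_2, G)$ and $(X_1', X_2', G')$ produce conjugate representations of $\Gamma_{1,1}$, then their pair-of-pants restrictions are conjugate, so Theorem \ref{MainThmIntro} yields $k \in \On(n)$ with $(X_1', X_2', X_3') = (kX_1 k^{-1}, kX_2 k^{-1}, kX_3 k^{-1})$. Any $h \in \Sp(2n,\R)$ realizing this conjugation must stabilize the normalized triple of canonical fixed points $(0, e, \infty)$, so $h = \mathrm{diag}(k, k)$; conjugating the explicit form of $\varrho(B) = g$ by $h$ then forces $G' = k G k^{-1}$, giving the desired equivalence. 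Combined with the surjectivity established in the second paragraph, this completes the bijection.
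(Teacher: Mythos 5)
Your proof follows essentially the same route as the paper's: cut the one-holed torus along $A$ to get the pair-of-pants data $(X_1,X_2,X_3)$ via Theorem \ref{MainThmIntro}, use part (iv) of the gluing theorem to force $X_1$ (hence $X_3=GX_1^\top G^{-1}$) to be contracting, read off the twist parameter $G$ and the explicit form of $\varrho(B)$ from part (iii), and handle well-definedness on the $\On(n)$-quotient exactly as in Remark \ref{RemTwist}. Your extra verification that $S=X_1^\top X_2^{-1}(G^\top)^{-1}X_1G^\top$ and that the series for $\bar Y$ transforms as stated is a useful supplement, since the paper leaves that substitution implicit.
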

\begin{proof}[Proof of Proposition \ref{PropG11}]
  Let $\varrho:\Gamma_{1,1} \rightarrow \Sp(2n,\R)$ be a maximal representation. Then we can define $\varrho':\Gamma_{0,3}\rightarrow \Sp(2n,\R)$ by
  \[
    \varrho'(C_1):=\varrho(A),\quad \varrho'(C_2):=\varrho(C),\quad \varrho'(C_3):=\varrho(BA^{-1}B^{-1}).
  \]
  By Theorem \ref{PropPropToledo} $\varrho'$ is a maximal representation of $\Gamma_{0,3}$.
   
We can  assume that the $\varrho'(C_i)$ are as in Theorem \ref{MainThmIntro} for some triple $(X_1,X_2,X_3)\in R$. Then $\varrho'(C_1)$ has the form of $c$ and $\varrho'(C_3)$ has the form of $\bar c$ in Proposition \ref{PropGlueSp2nRIntro} and by construction $\varrho'(C_1)^{-1}$ and $\varrho'(C_3)$ are conjugate and by the same proposition they are both hyperbolic.  Since $\varrho'(C_1)^{-1}$ and $\varrho'(C_3)$ are conjugate, there exists $G\in \GL(n,\R)$ with  $X_3=GX_1^\top G^{-1}$ and
 \[
 \varrho(B)=\left(\begin{array}{cc}
	Y_3 GY_1^{-1}-(G^\top)^{-1}&  Y_3G\\
	GY_1^{-1} & -G
\end{array}\right),
 \]
 where $Y_1$ is the fixed point of $\varrho'(C_1)$ transversal to $0$ and $Y_3$ is the fixed point of $\varrho'(C_3)$ transversal to $\infty$.
By Remark \ref{RemTwist} this triple $(X_1,X_2,G)$ is unique up to conjugation with an element from $\On(n)$.
   
  We can construct a maximal representation of $\Gamma_{0,3}$ for any triple $X_1$, $X_2$ and $G$ with $X_1$ contracting and $(X_1,X_2,(GX_1G^{-1})^\top)\in R$ and close the handle according to Proposition \ref{PropGlueSp2nRIntro}. This provides an inverse map to the construction given above.
\end{proof}
  \begin{proposition}\label{PropG04} 
  There exists a bijection between 
  \begin{align*}
   \big\{ &(X_1,X_2,X_3,\bar X_1, \bar X_2, G)\in \GL(n,\R)^6 | (X_1,X_2,X_3)\in R,\\ 
     & (\bar X_1, \bar X_2, GX_1^\top G^{-1})\in R, X_1 \text{ contracting} \big\} / \sim
  \end{align*} and $\Rep_{max}(\Gamma_{0,4},\Sp(2n,\R))$, 
  where for $k,l\in \On(n)$, 
  \[ 
  (X_1,X_2,X_3,\bar X_1, \bar X_2, G)
  \]
   and 
   \[
    (kX_1k^{-1},kX_2k^{-1},kX_3k^{-1},l\bar X_1l^{-1}, l\bar X_2 l^{-1}, lGk^{-1})
    \] 
  are equivalent.
\end{proposition}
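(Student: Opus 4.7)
The plan is to mimic the proof of Proposition~\ref{PropG11}, but replacing the HNN handle-closing step by an amalgamation of two pairs of pants. Choose a separating simple closed curve $D$ in $\Sigma_{0,4}$ splitting it as $P_1\cup_D P_2$; this presents $\Gamma_{0,4}$ as an amalgamated free product $\Gamma_{0,3}^{(1)}*_{\langle D\rangle}\Gamma_{0,3}^{(2)}$. Arrange the standard generators so that $D$ appears as the first boundary generator of $P_1$ and, with the opposite orientation, as the third boundary generator of $P_2$.

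Given a maximal $\varrho:\Gamma_{0,4}\to\Sp(2n,\R)$, the restrictions $\varrho_i:=\varrho|_{\Gamma_{0,3}^{(i)}}$ are maximal by Theorem~\ref{PropPropToledo}(v). Applying Theorem~\ref{MainThmIntro} to each restriction produces triples $(X_1,X_2,X_3)\in R$ and $(\bar X_1,\bar X_2,\bar X_3)\in R$, each canonical up to diagonal $\On(n)$-conjugation, and the condition $X_1\in B$ records the choice of canonical $S$-hyperbolic fixed point at the gluing curve via Proposition~\ref{PropNormalForm}. Compatibility of the two restrictions is expressed by Theorem~\ref{PropGlueSp2nRIntro}(ii): $\varrho_1(D)^{-1}$ and $\varrho_2(D^{-1})$ must be conjugate in $\Sp(2n,\R)$. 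With our conventions these two matrices are in the standard $c_1$- and $c_3$-shapes of Theorem~\ref{PropGlueSp2nRIntro}(i) with parameters $X_1$ and $\bar X_3$, and part~(iii) of that theorem translates the conjugacy into the existence of $G\in\GL(n,\R)$ with $\bar X_3=GX_1^\top G^{-1}$. Replacing $\bar X_3$ by this expression yields the tuple described in the proposition.

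For the reverse direction, start from $(X_1,X_2,X_3,\bar X_1,\bar X_2,G)$ satisfying the stated constraints and set $\bar X_3:=GX_1^\top G^{-1}$. Theorem~\ref{MainThmIntro} produces maximal $\varrho_i:\Gamma_{0,3}^{(i)}\to\Sp(2n,\R)$, and the explicit conjugating element from Theorem~\ref{PropGlueSp2nR}(iii) allows us to replace $\varrho_2$ by a conjugate $g\varrho_2g^{-1}$ whose value on $D^{-1}$ equals $\varrho_1(D)^{-1}$. The universal property of the amalgam then produces a well-defined $\varrho:\Gamma_{0,4}\to\Sp(2n,\R)$, which is maximal by additivity of the Toledo invariant (Theorem~\ref{PropPropToledo}(v)).

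It remains to verify that the correspondence descends to the stated equivalence. Replacing $(X_1,X_2,X_3)$ by its diagonal conjugate under $k\in\On(n)$ and $(\bar X_1,\bar X_2,\bar X_3)$ by its diagonal conjugate under $l\in\On(n)$ preserves the relation $\bar X_3=GX_1^\top G^{-1}$ if and only if $G$ is simultaneously replaced by $lGk^{-1}$ (using $k^\top=k^{-1}$), which is precisely the relation $\sim$ in the statement; together with Remark~\ref{RemTwist} this shows that the association $(X_1,X_2,X_3,\bar X_1,\bar X_2,G)\mapsto[\varrho]$ descends to a bijection. The main technical obstacle is to keep careful track of the orientation reversal at $D$ and of the canonical fixed-point normalization at the gluing boundary, so that the parameter $G$ is genuinely intrinsic, i.e. unaffected by any other admissible choice of representatives within the $\On(n)\times\On(n)$-orbit.
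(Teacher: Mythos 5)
Your argument is correct and is exactly the proof the paper intends but omits: it transposes the written proof of Proposition \ref{PropG11} from the HNN (handle-closing) setting to the amalgamated-product setting of a separating curve, using Theorem \ref{MainThmIntro} on each pair of pants, Theorem \ref{PropGlueSp2nRIntro} (ii)--(iv) to force $X_1\in B$ and to produce the twist $G$ with $\bar X_3=GX_1^\top G^{-1}$, and Remark \ref{RemTwist} for the $\On(n)\times\On(n)$ ambiguity. The only point worth tightening is the one you flag yourself: the twist $G$ must be read off from the position of $\varrho_2$ inside the globally normalized $\varrho$ (not merely from the conjugacy of the boundary elements), which is what makes the correspondence injective.
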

\begin{proposition}\label{PropG12}
  There exists a bijection between 
  \begin{align*}
   \big\{(&X_1,X_2,G,Y,\bar X,H)\in \GL(n,\R)^6|  (X_1,X_2,G\bar X^\top G^{-1})\in R, \\  & (Y,\bar X,HY^\top H^{-1})\in R, Y,\bar X \text{ contracting}  \big\}/ \sim
  \end{align*}
  and   $\Rep_{max}(\Gamma_{1,2},\Sp(2n,\R))$, where for  $k,l\in \On(n)$ 
\[ 
  (X_1,X_2,G,Y,\bar X,H)
\] and 
\[
(kX_1k^{-1},kX_2k^{-1},kGl^{-1},lYl^{-1},l\bar Xl^{-1},lHl^{-1})
\]
 are equivalent. 
\end{proposition}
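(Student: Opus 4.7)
The strategy parallels Proposition \ref{PropG11} and Proposition \ref{PropG04}: we use a pants decomposition of $\Sigma_{1,2}$ consisting of two pairs of pants $P_1$ and $P_2$, joined by exactly one gluing and one handle-closing. Topologically, we cut $\Sigma_{1,2}$ along a separating simple closed curve into a pair of pants $P_1$ (which still carries one of the original boundaries of $\Sigma_{1,2}$) and a once-punctured torus $\Sigma_{1,1}$ (which carries the other). The surface $\Sigma_{1,1}$ is in turn obtained from a second pair of pants $P_2$ by identifying two of its three boundaries. The parameters $(X_1,X_2,G\bar X^\top G^{-1})$ will describe $P_1$, while $(Y,\bar X,HY^\top H^{-1})$ will describe $P_2$, with $H$ encoding the handle-closing twist within $P_2$ and $G$ encoding the gluing twist between $P_1$ and $\Sigma_{1,1}$.

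To construct the forward map, given a tuple $(X_1,X_2,G,Y,\bar X,H)$ satisfying the stated conditions, I would first apply Theorem \ref{MainThmIntro} to produce a maximal representation $\varrho_2$ of $\Gamma_{0,3}$ with length parameters $(Y,\bar X,HY^\top H^{-1})$. Because $Y$ is contracting and the third parameter of $P_2$ has the precise form $HY^\top H^{-1}$ prescribed by Theorem \ref{PropGlueSp2nR}(iii), the construction used in the proof of Proposition \ref{PropG11} (identification of $C_1$ and $C_3$ of $P_2$ with twist $H$) yields a maximal representation of $\Gamma_{1,1}$ whose remaining boundary carries parameter $\bar X$. Independently, Theorem \ref{MainThmIntro} produces a maximal $\varrho_1$ of $\Gamma_{0,3}$ from $(X_1,X_2,G\bar X^\top G^{-1})$; since $\bar X$ is contracting and $G\bar X^\top G^{-1}$ is conjugate to $\bar X^\top$, Theorem \ref{PropGlueSp2nR}(iii) lets us glue the third boundary of $P_1$ to the remaining boundary of $\Sigma_{1,1}$ using $G$ as the explicit twist. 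Additivity of the Toledo invariant (Theorem \ref{PropPropToledo}(v)) together with maximality of both pieces ensures the resulting representation is maximal.

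For the inverse map, given a maximal $\varrho:\Gamma_{1,2}\to \Sp(2n,\R)$, I would cut $\Sigma_{1,2}$ along the separating curve described above. The restrictions of $\varrho$ to $\pi_1(P_1)$ and $\pi_1(\Sigma_{1,1})$ are again maximal by Theorem \ref{PropPropToledo}(v) (the sum of their Toledo invariants is maximal, and each is individually bounded by the Milnor-Wood inequality, so each must be maximal). Theorem \ref{MainThmIntro} then gives parameters $(X_1,X_2,X_3)\in R$ for $\varrho|_{\pi_1(P_1)}$, unique up to diagonal $\On(n)$-conjugation by some $k$. Proposition \ref{PropG11} applied to $\varrho|_{\pi_1(\Sigma_{1,1})}$ yields $(Y,\bar X,H)$ with $Y\in B$, unique up to diagonal $\On(n)$-conjugation by some $l$. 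The boundary gluing condition $\varrho(C_3^{P_1})=\varrho(C_2^{P_2})^{-1}$, together with Theorem \ref{PropGlueSp2nR}(iii)(iv), forces $X_3$ and $\bar X^\top$ to be conjugate in $\GL(n,\R)$ and rules out eigenvalues of modulus one, recovering the representative $X_3=G\bar X^\top G^{-1}$ for a unique $G\in \GL(n,\R)$ and yielding $\bar X$ contracting.

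Finally, the equivalence relation in the statement arises directly from the bookkeeping of the two independent $\On(n)$-conjugation ambiguities. A replacement of the representative of the $P_1$-parameters by $k\in \On(n)$ sends $(X_1,X_2,X_3)\mapsto (kX_1k^{-1},kX_2k^{-1},kX_3k^{-1})$, while an analogous replacement for $P_2$ by $l\in \On(n)$ sends $(Y,\bar X,H)\mapsto (lYl^{-1},l\bar X l^{-1},lHl^{-1})$; imposing compatibility with the identity $X_3=G\bar X^\top G^{-1}$ and using orthogonality of $l$ (so $(l\bar X l^{-1})^\top=l\bar X^\top l^{-1}$) forces $G\mapsto kGl^{-1}$, which is exactly the stated equivalence. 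I expect the main technical obstacle to be this bookkeeping of the two independent $\On(n)$-actions and of the twist parameter $G$, together with verifying that each step (normal form of the boundary of $\Sigma_{1,1}$, compatibility with the canonical contracting fixed point used in Proposition \ref{PropNormalForm}) aligns correctly with the conventions of Theorem \ref{PropGlueSp2nR}; every substantive geometric input is already supplied by Theorem \ref{MainThmIntro}, Theorem \ref{PropGlueSp2nR}, and Proposition \ref{PropG11}.
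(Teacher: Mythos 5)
Your proposal is correct and follows exactly the route the paper intends: Proposition \ref{PropG12} is stated there without a written proof, as an instance of the general recipe of Section \ref{SecParamGen} (parametrize the pair of pants $P_1$ by Theorem \ref{MainThmIntro}, the one-holed torus built from $P_2$ by Proposition \ref{PropG11}, and perform the remaining gluing via Theorem \ref{PropGlueSp2nR} and Remark \ref{RemTwist}), which is precisely what you do. One small correction to your opening topological description: the separating curve cuts $\Sigma_{1,2}$ into a pair of pants $P_1$ carrying \emph{both} original boundary components and a one-holed torus carrying \emph{neither} (a surface $\Sigma_{1,1}$ has only one boundary circle, which must be the cutting curve), not one original boundary on each piece; your actual construction, in which $X_1$ and $X_2$ are both free boundaries of $P_1$, already reflects the correct picture, so nothing downstream is affected.
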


  \begin{proposition}\label{PropG20}
 There exists a bijection between 
  \begin{align*}
  \big\{ & (X_1,X_2,X_3,G_3, G_2,G_1)\in \GL(n,\R)^6 |   (X_1,X_2,X_3)\in R,\\ & (G_1 X_3^\top G_1 ^{-1}, G_2 X_2^\top G_2^{-1},G_3 X_1^\top G_3^{-1})\in R, \\  & X_i \text{ contracting} \big\}/ \sim
  \end{align*} and $\Rep_{max}(\Gamma_{2,0},\Sp(2n,\R))$,
  where  for $l,k\in \On(n)$, 
  \[
  (X_1,X_2,X_3,G_3, G_2,G_1)
  \]
   and 
   \[   (kX_1k^{-1},kX_2k^{-1},kX_3k^{-1},lG_3k^{-1}, lG_2k^{-1} ,lG_1 k^{-1})
   \]
   are equivalent.
\end{proposition}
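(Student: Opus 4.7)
The plan is to follow the same template as Propositions \ref{PropG11}, \ref{PropG04}, and \ref{PropG12}: decompose $\Sigma_2$ into two pairs of pants, parametrize each using Theorem \ref{MainThmIntro}, and then read off the three gluing data using Theorem \ref{PropGlueSp2nR}.

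First I would fix the standard decomposition of $\Sigma_2$ (Definition \ref{DefStandardGraph}) into two pairs of pants $P_1$ and $P_2$ in which the boundary components $C_1, C_2, C_3$ of $P_1$ are identified with $C_3', C_2', C_1'$ of $P_2$ respectively; this ordering is what produces the specific pairings $X_1^\top \leftrightarrow \bar X_3$, $X_2^\top \leftrightarrow \bar X_2$, $X_3^\top \leftrightarrow \bar X_1$ appearing in the statement. Given a maximal representation $\varrho\colon\Gamma_2\to \Sp(2n,\R)$, its two restrictions $\varrho_i$ to $\pi_1(P_i)$ are maximal representations of $\Gamma_{0,3}$ by the additivity of the Toledo invariant (Theorem \ref{PropPropToledo}), so Theorem \ref{MainThmIntro} produces triples $(X_1,X_2,X_3)$ and $(\bar X_1,\bar X_2,\bar X_3)$ in $R$, each unique up to diagonal conjugation by $\On(n)$.

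Next I would establish the contracting hypothesis on the $X_i$ and $\bar X_i$. Because $\Sigma_2$ is closed, the decomposition curves are interior simple closed curves and the limit curve produced by Theorem \ref{ThmLimitCurve} forces each $\varrho(C_i)$ to be $S$-hyperbolic; consequently the canonical fixed point used in the normalization of Theorem \ref{MainThmIntro} is transversal to a second, repelling one, so the spectrum of each $X_i$ and $\bar X_i$ avoids the unit circle. This is also necessary by Theorem \ref{PropGlueSp2nR}(iv): any eigenvalue of modulus one would obstruct the conjugacy $\varrho_1(C_i)^{-1}\sim \varrho_2(C'_{4-i})$ required by the gluing. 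With $X_i,\bar X_i \in B$ established, I would invoke Theorem \ref{PropGlueSp2nR}(iii) for each of the three gluings: after putting $\varrho_1(C_i)$ into the form $c$ and $\varrho_2(C'_{4-i})$ into the form $\bar c$ of Theorem \ref{PropGlueSp2nRIntro}, the conjugacy $\varrho_1(C_i)^{-1}\sim\varrho_2(C'_{4-i})$ yields elements $G_i\in\GL(n,\R)$ with $\bar X_1=G_1 X_3^\top G_1^{-1}$, $\bar X_2=G_2 X_2^\top G_2^{-1}$, $\bar X_3=G_3 X_1^\top G_3^{-1}$.

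For the converse direction, any sixtuple obeying the stated relations produces, via Theorem \ref{MainThmIntro}, two compatible maximal pair-of-pants representations, which can be glued by the explicit conjugators constructed in Theorem \ref{PropGlueSp2nR}(iii); maximality of the resulting representation of $\Gamma_2$ again follows from additivity of the Toledo invariant. To pin down the residual ambiguity, I would observe that $(X_1,X_2,X_3)$ is defined up to conjugation by some $k\in\On(n)$, the stabilizer of the maximal triple $(0,e,\infty)$, and independently $(\bar X_1,\bar X_2,\bar X_3)$ up to some $l\in\On(n)$; propagating these through the relation $\bar X_i = G_i X_{4-i}^\top G_i^{-1}$ replaces $G_i$ by $lG_ik^{-1}$, giving exactly the equivalence relation in the statement. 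The main subtlety I expect to handle carefully is verifying that all three gluings can be realized simultaneously within a single element of $\Sp(2n,\R)$, rather than only one at a time; this is done inductively, first using Proposition \ref{PropG04} to reduce to a representation of $\Gamma_{0,4}$ and then closing two handles as in Proposition \ref{PropG11}, checking at each step that the next twist parameter still ranges freely over $\GL(n,\R)$.
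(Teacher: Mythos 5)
Your proposal is correct and follows essentially the route the paper intends: the paper states Proposition \ref{PropG20} without proof, the template being the argument for Proposition \ref{PropG11} (restrict to the pair-of-pants pieces, apply Theorem \ref{MainThmIntro}, and extract the twist parameters from Theorem \ref{PropGlueSp2nR}(iii), with part (iv) forcing the $X_i$ to be strictly contracting and Remark \ref{RemTwist} giving the $\On(n)\times\On(n)$ ambiguity $G_i\mapsto lG_ik^{-1}$). Your appeal to the limit curve for $S$-hyperbolicity is dispensable, but you correctly back it up with the necessary-condition argument from Theorem \ref{PropGlueSp2nR}(iv), so nothing is missing.
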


\begin{remark}\label{RemG11}
  Note that the triple $(X_1,X_2,GX_1\top G^{-1})$ is an element of $\tilde R$ if and only if 
  \[
  (X_1^\top)^{-1} GX_1^\top G^{-1} (X_2^\top)^{-1}=[(X_1^\top)^{-1},G](X_2^\top)^{-1}
  \] is symmetric positive definite. 
\end{remark}

\begin{remark}\label{RemGluingGraphs} As a preparation of the most general statement of the coordinates in Theorem \ref{ThmGeneral}, we label the gluing graphs with the parameters as follows:
\begin{figure}[ht]
\centering
\includegraphics{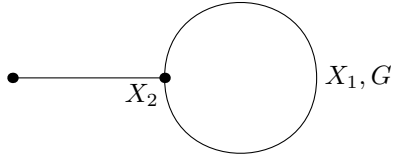}
\caption{Gluing graph for $\Gamma_{1,1}$}
\end{figure}

\begin{figure}[ht]
\centering  \includegraphics{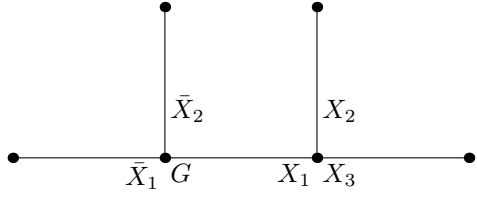}
\caption{Gluing graph for $\Gamma_{0,4}$}
\end{figure}
\begin{figure}[ht]
\centering  \includegraphics{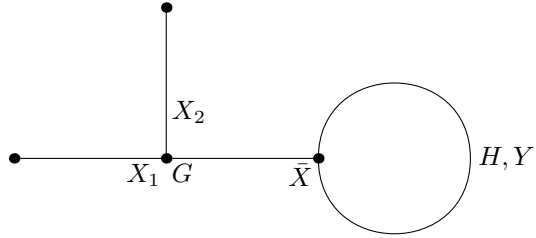}
\caption{Gluing graph for $\Gamma_{2,1}$}
\end{figure}\end{remark} 

One goal to construct explicit coordinates is to find concrete examples for representations. 

The propositions from Section \ref{SecMoreParam} can be used to draw path' in the representations varieties. Indeed, to draw a path in $\Rep_{max}(\Gamma_{0,3},\Sp(2n,\R))$, it suffices to draw a path in $R$. To do that we can choose two arbitrary path' for $X_1(t)$ and $X_2(t)$ in $\bar B$. Choose in addition a path $ S(t)\subset \Sym(n,\R)^+$ such that 
 \[
   X_3(t):=S(t)X_1(t)^{-1}X_2(t)^\top
 \] 
 is in $\bar B$. Then the $(X_1(t),X_2(t),X_3(t))$ defines a path in $R$. It is clear that the choice of a path $S(t)$ in $\Sym(n,\R)^+$ is always possible and since scaling with positive numbers does not chance the property symmetric positive definite, we can scale this path such that $S(t)X_1(t)^{-1}X_2(t)^\top\subset \bar B$.

\begin{remark}
  Unfortunately we are not able to control the eigenvalues of $X_3(t)$ in the construction presented above. While we can choose the conjugacy class of $X_1$ and $X_2$ arbitrarily, we have no control over the one of $X_3$. 
  
  We want to remark that $\GL(n,\R)$ acts on $R$ via 
  \[
     g.(X_1,X_2,X_3):= (gX_1g^\top, gX_2g^\top, gX_3g^\top).
  \]
 Let $X_3(X_2^\top)^{-1}X_1=:S$ be symmetric and positive definite. Then $S=g^{-1}(g^\top)^{-1}$ for some $g\in \GL(n,\R)$ (which can be chosen canonically!). This means that every representation of $\Gamma_{0,3}$ into $\GL(n,\R)$ gives rise to a family of maximal representation of $\Gamma_{0,3}$ into $\Sp(2n,\R)$. 
\end{remark}
 For $n=2$ we can write down more representations explicitly:
 \begin{example}
   For maximal representations $h$ from $\Gamma_{0,3}$ into $\SL(2,\R)$ we can choose the eigenvalues of all $h(C_i)$ independently. So defining 
   \[
     X_1:=h(C_1),\quad X_2:=(h(C_2)^\top)^{-1},\quad X_3:=h(C_3)
   \]
   gives parameters of a maximal representation into $\Sp(2n,\R)$ whose eigenvalues we can control. In particular, we can multiply each $X_i$ with a positive number and still obtain a maximal representation.
 \end{example}

\begin{corollary}\label{CorG11}
  There exists a surjective map from  $B\times \GL(n,\R)\times \Omega$ onto $\Rep_{max}(\Gamma_{1,1},\Sp(2n,\R))$.
\end{corollary}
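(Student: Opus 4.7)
The plan is to use Proposition \ref{PropG11} as the main tool: it identifies $\Rep_{max}(\Gamma_{1,1},\Sp(2n,\R))$ with the $\On(n)$-quotient of the parameter set of triples $(X_1,X_2,G) \in \GL(n,\R)^3$ satisfying $X_1 \in B$ and $(X_1,X_2,GX_1^\top G^{-1}) \in R$. To build the surjection I would use $X_1$ as the contracting length parameter of one pair-of-pants generator, $G$ as the twist parameter used to close the handle, and the element of $\Omega$ as the symmetric positive definite matrix $X_3(X_2^\top)^{-1}X_1$ appearing in the definition of $R$.

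Concretely, given $(X_1,G,S) \in B \times \GL(n,\R) \times \Omega$, the map I would define sets $X_3 := GX_1^\top G^{-1}$ and determines $X_2$ by solving
\[
X_3 (X_2^\top)^{-1} X_1 = S,
\]
which has the unique solution $X_2 = (G^\top)^{-1} X_1 G^\top S^{-1} X_1^\top \in \GL(n,\R)$. Since conjugation preserves eigenvalues, $X_3$ also lies in $B$, and by construction $S \in \Omega$, so the triple $(X_1, X_2, X_3)$ satisfies the symmetry and positivity condition from Theorem \ref{MainThmIntro}. By the remark just after that theorem, such a triple yields a maximal representation of $\Gamma_{0,3}$, and the relation $X_3 = G X_1^\top G^{-1}$ is precisely the gluing condition of Theorem \ref{PropGlueSp2nR}(iii), allowing the handle to be closed via the twist $G$ to produce a representation of $\Gamma_{1,1}$ which is maximal by additivity of the Toledo invariant.

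For surjectivity, I would argue directly: given any class in $\Rep_{max}(\Gamma_{1,1},\Sp(2n,\R))$, Proposition \ref{PropG11} provides a representative $(X_1,X_2,G)$ with $X_1 \in B$ and $(X_1,X_2,GX_1^\top G^{-1}) \in R$. The definition of $R$ guarantees $S := GX_1^\top G^{-1}(X_2^\top)^{-1}X_1 \in \Omega$, and the triple $(X_1,G,S)$ is clearly sent back to the original class by the construction above.

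The main point to watch is that the matrix $X_2$ produced by solving for $S$ is not \emph{a priori} contained in $\bar B$, so the triple $(X_1,X_2,GX_1^\top G^{-1})$ need not be a canonical representative in the sense of Theorem \ref{MainThmIntro}. This is not an obstruction because we are mapping to the representation variety itself rather than to the normalized parameter space, and the formulas of Theorem \ref{MainThmIntro} produce a maximal representation for any triple in $\GL(n,\R)^3$ satisfying the symmetry and positivity condition. Once this subtlety is acknowledged, the remaining verifications are purely algebraic manipulations of the formulas displayed in Proposition \ref{PropG11}.
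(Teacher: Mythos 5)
Your proposal is correct and is essentially the intended argument: the paper states the corollary without proof, but it is clearly meant to follow from Proposition \ref{PropG11} together with the preceding discussion that the symmetric positive definite matrix $S=X_3(X_2^\top)^{-1}X_1$ can be used as a free parameter in $\Omega$ to determine one of the length parameters, exactly as you do by solving for $X_2$. Your handling of the only delicate point — that the recovered $X_2$ need not lie in $\bar B$ — is also right, since the formulas of Theorem \ref{MainThmIntro} produce a maximal representation for any triple satisfying the symmetry and positivity condition, and the gluing criterion of Theorem \ref{PropGlueSp2nR}(iii) only requires $X_1$ and $GX_1^\top G^{-1}$ to be contracting.
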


\begin{corollary}\label{CorG04}
  There exists a surjective map from $B\times \GL(n,\R)^3\times \Omega^2$ onto $\Rep_\text{max}(\Gamma_{0,4},\Sp(2n,\R))$.
\end{corollary}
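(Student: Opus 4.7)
The plan is to piggyback on the bijection of Proposition \ref{PropG04}, which identifies $\Rep_{max}(\Gamma_{0,4},\Sp(2n,\R))$ with equivalence classes of sextuples $(X_1,X_2,X_3,\bar X_1,\bar X_2,G)\in\GL(n,\R)^6$ satisfying $(X_1,X_2,X_3)\in R$, $(\bar X_1,\bar X_2,GX_1^\top G^{-1})\in R$ and $X_1\in B$. The symmetric-positivity conditions built into $R$ allow the entries $X_3$ and $\bar X_1$ to be traded for two symmetric positive definite matrices in $\Omega$.

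Concretely, I will define
\[
  \phi:B\times\GL(n,\R)^3\times\Omega^2\to\Rep_{max}(\Gamma_{0,4},\Sp(2n,\R))
\]
by sending $(X_1,X_2,\bar X_2,G,S_1,S_2)$ to the representation attached, via Proposition \ref{PropG04}, to the sextuple $(X_1,X_2,X_3,\bar X_1,\bar X_2,G)$ where
\[
  X_3:=S_1X_1^{-1}X_2^\top,\qquad \bar X_1:=\bar X_2^\top G(X_1^\top)^{-1}G^{-1}S_2.
\]
By construction the two middle products $X_3(X_2^\top)^{-1}X_1$ and $(GX_1^\top G^{-1})(\bar X_2^\top)^{-1}\bar X_1$ equal $S_1$ and $S_2$, both in $\Omega$. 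I then verify that $\phi$ is well defined: the two triples need not sit in $R$ (the entries $X_3$ and $\bar X_1$ may lie outside $\bar B$), but the remark in Section \ref{Sec2} guarantees that the formulas of Theorem \ref{MainThmIntro} still yield maximal representations $\varrho_1,\varrho_2$ of $\Gamma_{0,3}$. Since $X_1\in B$ is strictly contracting, so is $GX_1^\top G^{-1}$, and Theorem \ref{PropGlueSp2nR}(iii) applies via the trivial conjugacy $X_1^\top=G^{-1}(GX_1^\top G^{-1})G$, delivering the required gluing element; additivity of the Toledo invariant (Theorem \ref{PropPropToledo}) then ensures that the glued representation of $\Gamma_{0,4}$ is maximal.

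For surjectivity, given any $[\varrho]\in\Rep_{max}(\Gamma_{0,4},\Sp(2n,\R))$, Proposition \ref{PropG04} yields a sextuple representative with $X_1\in B$. Setting $S_1:=X_3(X_2^\top)^{-1}X_1$ and $S_2:=(GX_1^\top G^{-1})(\bar X_2^\top)^{-1}\bar X_1$, both of which lie in $\Omega$ by the defining conditions of $R$, one checks that $(X_1,X_2,\bar X_2,G,S_1,S_2)\in B\times\GL(n,\R)^3\times\Omega^2$ is a preimage of $[\varrho]$ under $\phi$, since the recipe for $\phi$ inverts the substitution used to recover $X_3$ and $\bar X_1$.

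I expect the subtlest step to be the well-definedness verification: the triples produced by $\phi$ live in $\GL(n,\R)^3$ but generally not in $R$, so Theorem \ref{MainThmIntro}'s formulas must be applied in their relaxed form, and the gluing of Theorem \ref{PropGlueSp2nR}(iii) must be invoked under the minimal hypothesis $X_1\in B$ (which forces $GX_1^\top G^{-1}$ to be contracting as well); making these two points precise is the bulk of the argument, while the remaining content is essentially a bookkeeping reshuffle of the parameters appearing in Proposition \ref{PropG04}.
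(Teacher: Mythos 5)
Your proof is correct and follows exactly the route the paper intends: the corollary is left unproved there, but the preceding discussion about trading the constrained entries $X_3$ and $\bar X_1$ for free symmetric positive definite matrices $S_1,S_2\in\Omega$ (with the formulas of Theorem \ref{MainThmIntro} still producing maximal representations outside $\bar B$, and the gluing of Theorem \ref{PropGlueSp2nR}(iii) available because $X_1\in B$ forces $GX_1^\top G^{-1}$ to be contracting) is precisely what you make explicit. Your surjectivity argument via Proposition \ref{PropG04} is also the intended one.
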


\begin{corollary}\label{CorG12}
  There exists a surjective map from $B^2 \times \GL(n,\R)^2\times \Omega^2$ onto $\Rep_{max}(\Gamma_{1,2},\Sp(2n,\R))$.
\end{corollary}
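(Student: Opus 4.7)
The plan is to follow the same reparameterization strategy as in Corollaries \ref{CorG11} and \ref{CorG04}. Starting from the identification of Proposition \ref{PropG12}, I would trade the two matrix parameters $X_1, X_2 \in \GL(n,\R)$ for the two symmetric positive definite matrices $S_1, S_2 \in \Omega$ that arise as the positivity products in the two $R$--conditions, keeping $Y, \bar X \in B$ and $G, H \in \GL(n,\R)$ unchanged.

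Concretely, given $(Y, \bar X, G, H, S_1, S_2) \in B^2 \times \GL(n,\R)^2 \times \Omega^2$, I would invert the two $R$--equations
\[
G\bar X^\top G^{-1}(X_2^\top)^{-1}X_1 \;=\; S_1, \qquad HY^\top H^{-1}(\bar X^\top)^{-1}Y \;=\; S_2
\]
to obtain $X_1, X_2 \in \GL(n,\R)$, exactly as in the proof of Corollary \ref{CorG11} where $S$ was used to solve explicitly for $X_2$. The first equation yields $X_2 = G^{-\top}\bar X G^\top S_1^{-1} X_1^\top$ once $X_1$ is pinned down by the second (for example by reading $\bar X = H^{-\top} Y H^\top S_2^{-1} Y^\top$ off $S_2$). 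By Remark \ref{RemNonUnique} the resulting triples $(X_1, X_2, G\bar X^\top G^{-1})$ and $(Y, \bar X, HY^\top H^{-1})$ then define maximal pair--of--pants representations via Theorem \ref{MainThmIntro}. Gluing them along the shared boundary with twist $G$ and closing the handle with twist $H$, both governed by Theorem \ref{PropGlueSp2nR}, produces a representation of $\Gamma_{1,2}$ which is maximal by additivity of the Toledo invariant (Theorem \ref{PropPropToledo}).

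Surjectivity is then immediate from Proposition \ref{PropG12}: every maximal $\varrho\colon \Gamma_{1,2} \to \Sp(2n,\R)$ is represented by some six--tuple $(X_1, X_2, G, Y, \bar X, H)$ with $Y, \bar X \in B$ and both triples in $R$, and setting $S_1, S_2$ to be the two $R$--products provides a preimage $(Y, \bar X, G, H, S_1, S_2)$ in $B^2\times\GL(n,\R)^2\times\Omega^2$.

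The hard part will be handling the asymmetry between the two $R$--equations: the second involves only $Y, \bar X, H, S_2$ and so acts as a compatibility constraint on the inputs rather than as a defining equation for a matrix parameter. The resolution, parallel to the earlier corollaries, is to treat $S_2$ as the primary datum and recover $\bar X$ (or alternatively $H$) from it by the explicit formula indicated above, so that the $R$--equation holds by construction; the dimension count $5n^2+n$ on the source versus $4n^2+2n$ on the target, with difference $n(n-1) = \dim(\On(n)\times\On(n))$, matches exactly the $\On(n)\times\On(n)$--equivalence already present in Proposition \ref{PropG12} and confirms that the resulting map is well--defined and surjective.
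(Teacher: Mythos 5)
Your reparameterization idea is the right one in spirit, but the execution has a genuine gap, and it sits exactly where you flagged ``the hard part''. The two $R$--equations are not symmetric in the way your plan requires: the first, $G\bar X^\top G^{-1}(X_2^\top)^{-1}X_1=S_1$, involves both $X_1$ and $X_2$, so given $(\bar X,G,S_1)$ it determines only \emph{one} of them in terms of the other; the second, $HY^\top H^{-1}(\bar X^\top)^{-1}Y=S_2$, involves neither. So you cannot ``invert the two equations to obtain $X_1,X_2$'': after your step you are left with a one--$\GL(n,\R)$--parameter family of admissible $(X_1,X_2)$ on one side, while on the other side $\bar X$ is overdetermined (it is both an input from the $B^2$ factor and an output of the formula $\bar X=H^{-\top}YH^\top S_2^{-1}Y^\top$). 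The dimension count balancing to $n(n-1)$ is a coincidence of these two defects cancelling; it does not produce a well-defined map.

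The deeper problem with the proposed resolution is that the recovered $\bar X$ need not lie in $B$, or even in $\bar B$ (already for $n=1$: $Y=1/2$, $H=1$, $S_2=1/100$ gives $\bar X=25$). This is not a cosmetic issue: $\bar X$ is the parameter of the curve along which the two pairs of pants are glued, and by Theorem \ref{PropGlueSp2nR} (iii)--(iv) the gluing requires the relevant normal-form parameter to be contracting -- if it has a unit-modulus eigenvalue the boundary holonomies are simply not inverse-conjugate, and if it is expanding the canonical parameter of $\varrho(C)$ is no longer $\bar X$ itself. This is precisely what distinguishes the present corollary from Corollaries \ref{CorG11} and \ref{CorG04}, where the variables traded for elements of $\Omega$ ($X_2$, resp.\ $X_2$ and $\bar X_2$) are parameters of \emph{free} boundary components of the glued surface and hence are not subject to any contraction constraint. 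Here one of the two $\Omega$--factors necessarily controls a \emph{glued} boundary, so you must either (a) prescribe $\bar X\in B$ and accept that $S_2$ then ranges only over those positive definite matrices with $S_2Y^{-1}\bar X^\top$ conjugate to $Y^\top$ (so $H$ is determined up to a centralizer rather than free in $\GL(n,\R)$), or (b) prescribe $(Y,H,S_2)$ and argue separately -- e.g.\ by rescaling $S_2$ -- that the locus where the recovered $\bar X$ is contracting still covers all of $\Rep_{max}(\Gamma_{1,2},\Sp(2n,\R))$ via Proposition \ref{PropG12}. Your write-up does neither, so as it stands the forward map is not defined on all of $B^2\times\GL(n,\R)^2\times\Omega^2$ and the surjectivity argument does not close.
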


\subsection{General parameters}\label{SecParamGen}
 In this section we state the most general theorem for Fenchel-Nielsen coordinates for maximal representations of $\Gamma_{g,m}$ into $\Sp(2n,\R)$. 
 
 The strategy to obtain these coordinates is the same as for the examples above. 
 \begin{enumerate}
   \item Choose a decomposition of the underlying surface into pairs of pants and handles and write down the corresponding gluing graph,
   \item  Theorem \ref{MainThmIntro} gives us coordinates for representations of $\Gamma_{0,3}$ and Proposition \ref{PropG11} gives coordinates for representations of $\Gamma_{1,1}$,
   \item From Proposition \ref{PropGlueSp2nRIntro} and Remark \ref{RemTwist} we know in which cases we can  glue representations and how we get twist parameters.
 \end{enumerate}

 \begin{theorem}\label{ThmGeneral}
   Let $\varrho:\Gamma_{g,m} \rightarrow \Sp(2n,\R)$ be a maximal representation. Then there exist length and twist parameters as in the following gluing graph:
\begin{figure}[ht]
\includegraphics[width=\textwidth]{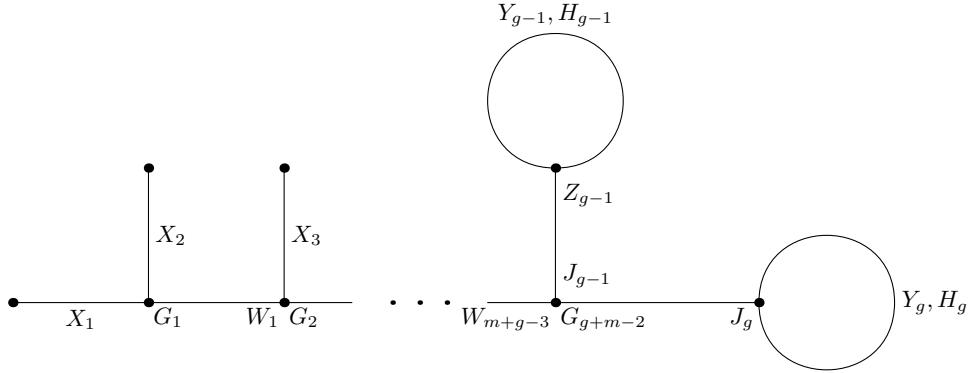}
\caption{General gluing graph}
\label{GenGluing}
\end{figure}
   
   where the $W_\bullet,X_\bullet, Y_\bullet, Z_\bullet$ are length parameter and the $G_\bullet, H_\bullet, J_\bullet$ are twist parameters subject to the usual relations and identifications.
   
   Conversely any representations defined with these parameters is maximal.
 \end{theorem}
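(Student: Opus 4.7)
The plan is to proceed by the strategy sketched immediately before the theorem, assembling the result from the already-established pieces using the gluing graph. First I would fix a standard pair-of-pants decomposition of $\Sigma_{g,m}$: cut along a maximal collection of $3g-3+m$ disjoint simple closed curves so that the complement is a disjoint union of $2g-2+m$ pairs of pants, and record which cuts are separating (``pants-to-pants'' gluings) and which are non-separating (``handle closures''). The gluing graph of Figure~\ref{GenGluing} encodes exactly this combinatorial data.

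For the forward direction, given a maximal $\varrho$, I would first argue that the restriction of $\varrho$ to each pair of pants is again maximal. This follows from additivity of the Toledo invariant: along a separating curve, Theorem~\ref{PropPropToledo}(v) gives $T_\varrho=T_{\varrho_1}+T_{\varrho_2}$, while $|\chi(\Sigma)|=|\chi(\Sigma_1)|+|\chi(\Sigma_2)|$ and each summand is bounded by $|\chi(\Sigma_i)|\rk\Dca$, so equality forces both restrictions to be maximal; along a non-separating curve, Theorem~\ref{PropPropToledo}(vi) together with invariance of $|\chi|$ under such cutting gives the same conclusion. Having maximal restrictions on each pants, Theorem~\ref{MainThmIntro} yields a length triple $(X_1,X_2,X_3)\in R$ at each pants vertex, unique up to simultaneous $\On(n)$-conjugation. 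At each internal edge one then invokes Proposition~\ref{PropGlueSp2nRIntro}: the two length matrices on either side of the cut must be conjugate in $\GL(n,\R)$, and the conjugating element $G\in\GL(n,\R)$ gives the twist parameter attached to that edge. Handle-closure edges are handled exactly as in the proof of Proposition~\ref{PropG11}, producing in that case a parameter $H$ conjugating $X^\top$ to $\bar X$. This attaches to every vertex and edge of the gluing graph the prescribed decoration.

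The main obstacle, as in the low-complexity cases worked out in Propositions~\ref{PropG04}, \ref{PropG12} and~\ref{PropG20}, is to make the quotient identifications coherent across the whole graph. At each pants vertex the length triple is only defined up to simultaneous $\On(n)$-conjugation, and a different choice of representative at a vertex modifies the twist parameter on each incident edge by a right, respectively left, multiplication with the corresponding $k\in\On(n)$ (Remark~\ref{RemTwist}). The cleanest way to organise this is to pick a spanning tree of the gluing graph, fix an $\On(n)$-representative at a root vertex, and propagate the choice of representative along the tree so that the remaining ambiguity on the tree edges is absorbed; the twist parameters on the finitely many non-tree edges (one for each handle) then record the genuine moduli. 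This bookkeeping is routine but is where the induction on the complexity of the graph actually happens; the relations and identifications in the statement are precisely those produced by this process.

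For the converse I would argue directly: given any decoration of the gluing graph by length parameters at the vertices (lying in $R$) and twist parameters at the edges compatible with the conditions of Proposition~\ref{PropGlueSp2nRIntro}(iii), Theorem~\ref{MainThmIntro} produces a maximal representation on each pair of pants, Proposition~\ref{PropGlueSp2nRIntro} guarantees that neighbouring pants can be amalgamated along the common boundary with the prescribed twist, and handle closures are performed via HNN extensions as in Proposition~\ref{PropG11}. The representation of $\Gamma_{g,m}$ produced by this assembly is maximal by repeated application of the additivity statements Theorem~\ref{PropPropToledo}(v)--(vi), exactly reversing the argument in the forward direction. This closes the proof.
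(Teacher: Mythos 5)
Your proposal is correct and follows essentially the same route as the paper, which in fact gives no formal proof of Theorem \ref{ThmGeneral} at all but only the three-step strategy stated just before it (decompose into pairs of pants, apply Theorem \ref{MainThmIntro} and Proposition \ref{PropG11} at the vertices, glue via Proposition \ref{PropGlueSp2nRIntro} and Remark \ref{RemTwist} at the edges); your use of Toledo additivity for both directions and your spanning-tree bookkeeping of the $\On(n)$-ambiguities simply make explicit what the paper leaves as "the usual relations and identifications."
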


\begin{remark}\label{RemPath}
  Theorem \ref{ThmGeneral} will be used in Section \ref{SecConComp} to define paths in $\Rep_{max}(\Gamma_{g,m}, \Sp(2n,\R))$. 
\end{remark}

\section{Applications}\label{SecApp}
\subsection{Limit Curves}
\begin{theorem}\label{ThmContLC}
 Let $h:\Gamma_{g,m}\rightarrow \PSL(2,\R)$ be a hyperbolization for a surface with geodesic boundaries. Denote by $\mathcal L$ its limit set in $S^1$. 
  Let $\varrho:\Gamma_{g,m} \rightarrow \Sp(2n,\R)$ be a maximal representation s.t. $\varrho(C_i)$ is $S$-hyperbolic for all $i$. Then $\varrho$ is Anosov and there exists a monotone, $\varrho$-equivariant, continuous map
  \[
    \varphi:\mathcal L\rightarrow \check S.
  \]
\end{theorem}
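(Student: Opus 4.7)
The plan is to extract the desired map from Theorem \ref{ThmLimitCurve} and then upgrade the available left-continuity to full continuity on $\mathcal{L}$ using the $S$-hyperbolic hypothesis on the boundary generators. Identifying $S^1$ with $\partial\mathbb{H}^2$ via the hyperbolization $h$, Theorem \ref{ThmLimitCurve} supplies a left-continuous, $\varrho$-equivariant map $\tilde\varphi:S^1\to\check S$ that sends maximal triples in $S^1$ to maximal triples in $\check S$. I would then set $\varphi:=\tilde\varphi|_{\mathcal L}$; monotonicity is immediate from the maximal-triple property, and it remains to prove continuity of $\varphi$ and the Anosov property.

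The key structural observation is the gap structure of $\mathcal L$. Because $h$ is a hyperbolization of a surface with geodesic boundary and $m\geq 1$, $\mathcal L$ is a Cantor set and $S^1\setminus\mathcal L$ is a countable disjoint union of open intervals, each one the lift of a boundary geodesic. Every gap $(a,b)$ has endpoints $a,b\in\mathcal L$ that are the repelling and attracting fixed points of some conjugate $h(\gamma C_i\gamma^{-1})$. By $\varrho$-equivariance, $\tilde\varphi(a)$ and $\tilde\varphi(b)$ must be fixed points of $\varrho(\gamma C_i\gamma^{-1})$ in $\check S$; the $S$-hyperbolicity hypothesis on $\varrho(C_i)$ forces exactly two such fixed points, which are moreover transversal, so the values of $\varphi$ at gap endpoints are completely pinned down.

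With boundary values controlled, continuity of $\varphi$ on $\mathcal L$ reduces to a case analysis. Approaching $x\in\mathcal L$ from the left through $\mathcal L$ is handled by left-continuity of $\tilde\varphi$. For approach from the right through $\mathcal L$, if $x$ is the left endpoint of a gap, no sequence in $\mathcal L$ can approach $x$ from inside the gap, so only points further right in $\mathcal L$ matter, and convergence to $\varphi(x)$ is forced by monotonicity together with the fact that the attracting fixed point of the corresponding $\varrho(C_i)$ is already assigned to $x$. At an accumulation point of $\mathcal L$ from both sides, a right-jump of $\tilde\varphi$ at $x$ would produce, using a sequence $h(\gamma_n)\to x$ and equivariance, arbitrarily close triples whose $\tilde\varphi$-image fails to be maximal, contradicting Theorem \ref{ThmLimitCurve}; hence no such jump exists.

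For the Anosov conclusion I would invoke the criterion of Guichard--Wienhard: it suffices to exhibit continuous, $\varrho$-equivariant, transverse maps from $\partial_\infty\Gamma_{g,m}\to\check S$ whose induced bundle over the geodesic flow of $\Sigma_{g,m}$ admits the required dominated splitting. The map $\varphi$ together with the companion map $\xi\mapsto\varphi(\xi^-)$ (where $\xi^-$ is the opposite endpoint of any geodesic through $\xi$) provides the flag data; transversality of their images is a direct consequence of maximality of triples and Proposition \ref{PropSouriau}-type transversality on $\check S$. The contraction along geodesic flow comes from $S$-hyperbolicity on peripheral elements (which pins down the behaviour near cusps/boundary) combined with the known contraction for hyperbolic elements arising from Proposition \ref{PropContrFixedPoints}. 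I expect the main obstacle to be the right-continuity argument at accumulation points of $\mathcal L$: one must rule out right-jumps using only the combination of left-continuity, monotonicity, maximality preservation, and the boundary $S$-hyperbolicity, without appealing to properties of closed surfaces that are not available here.
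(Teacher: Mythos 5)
Your route is genuinely different from the paper's, but it has two serious gaps. The central one is the right-continuity argument at two-sided accumulation points of $\mathcal L$. You claim that a right-jump of $\tilde\varphi$ at such a point would "produce arbitrarily close triples whose $\tilde\varphi$-image fails to be maximal, contradicting Theorem \ref{ThmLimitCurve}." This is not so: a monotone, maximality-preserving map can perfectly well have jump discontinuities --- that is exactly why Theorem \ref{ThmLimitCurve} only delivers \emph{left}-continuity in the first place, and a jump never forces a non-maximal image triple. To close this you would need an actual dynamical argument (e.g.\ show that both one-sided limits $\varphi^{\pm}(x)$ at the attracting fixed point $x$ of a non-peripheral hyperbolic element must coincide with the attracting fixed point of its $\varrho$-image, then propagate by density and minimality of the $\Gamma_{g,m}$-action on $\mathcal L$); none of this is supplied, and you yourself flag it as the obstacle. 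A secondary inaccuracy: $S$-hyperbolicity does \emph{not} force "exactly two" fixed points in $\check S$ for $n>1$ (a diagonal $S$-hyperbolic element of $\Sp(2n,\R)$ fixes many Lagrangians); only the attracting/repelling pair is unique, and identifying $\varphi$ at gap endpoints with that pair again needs a dynamical argument, not just equivariance. The Anosov part is likewise only asserted: for a non-compact surface the verification of the dominated splitting / uniform contraction along the geodesic flow is precisely the hard point, and citing Proposition \ref{PropSouriau} for transversality is off target (transversality of distinct limit-curve values comes from maximality of triples, cf.\ Remark \ref{RemFPTransversal}).

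For contrast, the paper avoids all of this by doubling: the hypothesis that each $\varrho(C_i)$ is $S$-hyperbolic enters only through Theorem \ref{PropGlueSp2nR}, which guarantees $\varrho(C_i)$ is conjugate to its inverse, so two copies of $\varrho$ can be glued into a maximal representation $\bar\varrho$ of the closed surface $\Sigma_{2g+m-1}$; continuity of the limit curve and the Anosov property are then imported wholesale from the closed-surface case of \cite{Anosov} and restricted to $\mathcal L$. If you want to keep your direct approach, the doubling trick is also the cleanest way to repair it; otherwise you must supply the fixed-point/minimality argument for right-continuity and an honest verification of the Anosov contraction near the boundary.
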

For maximal representations of $\Gamma_g$ into $\Sp(2n,\R)$ the theorem is proved in \cite{Anosov}.

Here we give a proof for Theorem \ref{ThmContLC} different from the one in \cite{S11}. In \cite{S11} we modified the proof for the continuity in \cite{Anosov} and parts of \cite{Surface} which where used there. Here we argue as follows:
\begin{proof}
Let $\varrho$ be representation of $\Sigma_{g,m}$ with $m\geq 1$ such that the $\varrho(C_i)$ are all $S$-hyperbolic. Since each real matrix is conjugate to its inverse, by Theorem \ref{PropGlueSp2nRIntro} $\varrho(C_i)$ is conjugate to its inverse. Therefore we can take a second copy of the same representation and glue the two copies to obtain a maximal representation $\bar \varrho$ of $\Sigma_{2g+m-1}$. We can consider $\Gamma_{g,m}$ as a subgroup of $\Gamma_{2g+m-1}$ with the property that $\bar \varrho|_{\Gamma_{g,m}}=\varrho$. The representation $\bar \varrho$ is a maximal representations of a closed surface group, hence it is Anosov and it has a continuous, equivariant limit curve $\bar \varphi:S^1\rightarrow \check S$, where the action on $S^1$ is given by a some hyperbolization $\bar h$ (\cite{Anosov}). The hyperbolization $\bar h$ restricts to a hyperbolization $h$ of $\Sigma_{g,m}$ with geodesic boundaries and limit set $\mathcal L$. The limit curve $\varphi$ of $\varrho$ is the restriction of $\bar \varrho$ to $\mathcal L$. Since $\bar \varphi$ is continuous so is its restriction $\varphi$. This also shows that $\varrho$ is Anosov.
\end{proof}

\subsection{Connected Components}\label{SecConComp}

Connected components of spaces of maximal representations of fundamental groups of closed surfaces have been counted using Higgs bundle techniques, see  \cite{RepSymplHiggs,GothenComp,DefMaxRep} and the references therein. Olivier Guichard and Anna Wienhard gave in \cite{GW09} one example for a representation in each connected component of $\Rep_{max}(\Gamma_g,\Sp(2n,\R))$. 

The parameters from Theorem \ref{MainThmIntro} allow us to count the connected components of $\Rep_{max}(\Gamma_{g,m},\Sp(2n,\R))$ for surfaces with boundary $\Sigma_{g,m}$ with $m\geq 1$. First we show that $\#\pi_0(\Rep_{max}(\Gamma_{g,m},\Sp(2n,\R)))\geq 2^{2g+m-1}$ (Proposition \ref{PropLowerBound}). Then we show, that one can deform every representation of $\Gamma_{g,m}$ with $m\geq 1$ into some standard representation (Proposition \ref{PropUpperBound}). This gives an upper bound and shows that $\# \pi_0(\Rep_{max}(\Gamma_{g,m},\Sp(2n,\R)))= 2^{2g+m-1}$.

 Before we start, we collect some fact needed later in this section.
\begin{lemma}\label{LemTwoComp}
  The set 
  \[ 
    B= \{X\in \GL(n,\R)| X \text{ contracting}\}
  \]
  has two connected components distinguished by the sign of the determinant.
\end{lemma}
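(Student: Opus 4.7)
The plan is to partition $B$ according to the sign of the determinant, set $B^+ = \{X \in B : \det X > 0\}$ and $B^- = \{X \in B : \det X < 0\}$, observe that $B = B^+ \sqcup B^-$ since $B \subset \GL(n,\R)$ and hence $\det \neq 0$ on $B$, and conclude that $B^+$ and $B^-$ lie in different connected components because $\det$ is continuous. What remains, and constitutes the real content of the lemma, is to show each of $B^\pm$ is path-connected.

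First I would observe the elementary \emph{scaling property}: if $X \in B$ has eigenvalues $\lambda_i$ with $|\lambda_i|<1$ and $s \in (0,1]$, then the eigenvalues $s\lambda_i$ of $sX$ still satisfy $|s\lambda_i|<1$, so $sX \in B$. Consequently the linear path $t \mapsto \bigl(1 - t(1-\epsilon)\bigr)X$ stays in $B$, connects $X$ to $\epsilon X$, and preserves $\sign\det$ (the scalar remains positive). Choosing $\epsilon$ so small that $\|\epsilon X\|_{op} < 1$, I reduce to connecting any element of $B^\pm$ that has operator norm $<1$ to a fixed representative.

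Second, I would exploit the polar decomposition: write $\epsilon X = O P$ with $O \in \On(n)$ and $P$ symmetric positive definite, where $O \in \SO(n)$ iff $\det X > 0$, and note that $\|P\|_{op} = \|\epsilon X\|_{op} < 1$. The homotopy
\[
  t \mapsto O\bigl((1-t)P + \tfrac{t}{2} I\bigr)
\]
connects $\epsilon X$ to $\tfrac{1}{2} O$ through matrices whose symmetric positive definite factor is a convex combination of two symmetric positive definite matrices of operator norm $<1$, and so still has operator norm $<1$; since orthogonal multiplication preserves operator norm and $\rho(\cdot) \le \|\cdot\|_{op}$, each intermediate matrix lies in $B$, and the sign of the determinant is constant along the path (equal to $\det O$). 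Finally, using that $\SO(n)$ and its non-identity coset $\On^-(n)$ are each path-connected, I pick a path $O(t)$ in the appropriate component from $O$ to $I$ (resp.\ to $\mathrm{diag}(-1,1,\ldots,1)$); then $\tfrac{1}{2} O(t)$ has operator norm $\tfrac{1}{2} < 1$ throughout, so it is a path in $B$ connecting $\tfrac{1}{2} O$ to the chosen representative.

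The main point that has to be checked at every step is that the path genuinely stays inside $B$, which is entirely driven by the inequality $\rho(X) \le \|X\|_{op}$: this is what converts the \emph{a priori} spectral radius condition defining $B$ into an operator-norm condition in a neighborhood of zero, where convexity arguments and the polar decomposition apply cleanly. There is no serious obstacle beyond bookkeeping this norm bound along each segment of the homotopy.
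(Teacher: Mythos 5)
Your argument is correct and complete. The paper states Lemma \ref{LemTwoComp} without any proof, so there is no argument of the author's to compare against; your write-up actually fills that gap. The structure is sound: $B^{\pm}$ are open in $B$ (preimages of $\pm(0,\infty)$ under the continuous, nonvanishing $\det$), so it suffices to show each is path-connected, and your three-stage path --- radial scaling into the region $\Norm{X}_{op}<1$ using that the spectral radius scales linearly, then the convex interpolation of the polar factor $P$ with $\tfrac12 I$ (a genuine convex combination $(1-t)P+t\cdot\tfrac12 I$, whose operator norm stays below $1$ and whose determinant stays positive), then a path in $\SO(n)$ or its nontrivial coset scaled by $\tfrac12$ --- visibly stays in $B$ and preserves $\sgn\det$ at every step. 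The only point worth making explicit is that each intermediate matrix is invertible, which holds because the interpolated symmetric factor is positive definite and the orthogonal factor is invertible; with that noted, the proof is airtight.
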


Recall
\begin{align*}
  \tilde R_n=&\{(X_1,X_2,X_3)\in \GL(n,\R)^3|X_3(X_2^\top)^{-1}X_1 \text{ symmetric}\\
   &\text{and positive definite}\},\\
  R=&\{(X_1,X_2,X_3)\in \bar B^3|X_3(X_2^\top)^{-1}X_1 \text{ symmetric and positive definite} \}\\
   R^*=&\{(X_1,X_2,X_3)\in B^3|X_3(X_2^\top)^{-1}X_1 \text{ symmetric and positive definite} \}
\end{align*}
and
\begin{lemma}\label{LemCone2}
  Note that $(X_1,X_2,X_3)\in \tilde R_n$ if and only if 
  \[
   (\lambda_1 X_1,\lambda_2 X_2, \lambda_3 X_3)\in \tilde R_n
  \]
 for $(\lambda_1,\lambda_2,\lambda_3)\in \tilde R_1$. 
\end{lemma}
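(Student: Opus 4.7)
The plan is to verify the claim by direct computation of the product $X_3(X_2^\top)^{-1}X_1$ under the scaling and then use the characterization of $\tilde R_1$ to track the sign.

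First I would compute, for a triple $(\lambda_1 X_1, \lambda_2 X_2, \lambda_3 X_3)$,
\[
  (\lambda_3 X_3)\bigl((\lambda_2 X_2)^\top\bigr)^{-1}(\lambda_1 X_1)
  = \frac{\lambda_1 \lambda_3}{\lambda_2}\, X_3 (X_2^\top)^{-1} X_1.
\]
Thus the matrix attached to the scaled triple is a real scalar multiple of $M:=X_3(X_2^\top)^{-1}X_1$, and the scalar is precisely $\lambda_3(\lambda_2)^{-1}\lambda_1$, which is the quantity whose positivity characterizes membership of $(\lambda_1,\lambda_2,\lambda_3)$ in $\tilde R_1$ (since for $n=1$, ``symmetric'' is vacuous and ``positive definite'' means positive).

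Next I would observe that for any $c \in \R$ the matrix $cM$ is symmetric iff $M$ is symmetric, and $cM$ is positive definite iff $M$ is positive definite and $c>0$ (provided $M$ itself is nonzero, which follows from the $\GL(n,\R)^3$ assumption, as $M$ is invertible). Thus, assuming $(\lambda_1,\lambda_2,\lambda_3) \in \tilde R_1$, i.e.\ $\lambda_1\lambda_3/\lambda_2 > 0$, we conclude that $M$ is symmetric positive definite if and only if $\frac{\lambda_1\lambda_3}{\lambda_2} M$ is. This yields both directions of the ``if and only if'' simultaneously, finishing the proof.

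There is no real obstacle here; the statement is a scaling observation. The only thing to be careful about is to note that the invertibility of $X_1, X_2, X_3$ (and thus of $M$) is automatic from the definition of $\tilde R_n \subset \GL(n,\R)^3$, so the positivity of the scalar $\lambda_1\lambda_3/\lambda_2$ is genuinely equivalent to preservation of positive definiteness, not merely sufficient.
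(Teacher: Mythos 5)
Your proof is correct and is exactly the intended (one-line scaling) argument: the paper states this lemma without proof, and your computation that the scaled triple produces $\frac{\lambda_1\lambda_3}{\lambda_2}\,X_3(X_2^\top)^{-1}X_1$ together with the observation that positive scalars preserve symmetric positive definiteness is all that is needed.
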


\begin{proposition}\label{PropCountR}
  The sets $\tilde R_n$, $R$ and $R^*$ have four connected components distinguished by $\sign \det X_i$, the signs of the determinants of the $X_i$.
\end{proposition}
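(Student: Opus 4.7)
The plan is to establish an upper bound of four components via a sign invariant and then show each stratum is nonempty and path-connected---directly for $\tilde R_n$, and by a scaling argument for $R$ and $R^*$.

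Since $S := X_3(X_2^\top)^{-1}X_1$ is symmetric positive definite, $\det S > 0$, which is equivalent to
\[
 \sign(\det X_1)\cdot \sign(\det X_2)\cdot \sign(\det X_3) = +.
\]
The continuous invariant $(X_1,X_2,X_3) \mapsto (\sign\det X_1, \sign\det X_2, \sign\det X_3)$ thus takes values in a four-element set and is constant on each connected component, giving an \emph{a priori} upper bound of four components for each of the three spaces.

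For $\tilde R_n$, the assignment
\[
 \Phi: \tilde R_n \longrightarrow \GL(n,\R)^2 \times \Sym_n^+(\R),\qquad (X_1,X_2,X_3) \longmapsto (X_1,X_2,S)
\]
is a homeomorphism with continuous inverse $(X_1,X_2,S) \mapsto (X_1,X_2, S X_1^{-1} X_2^\top)$. Since $\Sym_n^+(\R)$ is convex and $\GL(n,\R)$ has exactly two path-components (Lemma \ref{LemTwoComp} applied to $\GL(n,\R)$), the target has exactly four path-components, indexed by $(\sign\det X_1,\sign\det X_2)$; the sign of $\det X_3$ is determined from these and $\det S>0$. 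Hence $\tilde R_n$ has exactly four components.

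To pass to $R$ and $R^*$, I use scaling. For $p = (X_1,X_2,X_3) \in R$ and any $u \in (0,1]$, Lemma \ref{LemCone2} gives $u\cdot p := (uX_1,uX_2,uX_3) \in \tilde R_n$ (since $(u,u,u)\in\tilde R_1$); the spectral radii of the entries scale by $u$, so $u \cdot p$ lies in $R$ (and in $R^*$ for $u < 1$), and $u\mapsto u\cdot p$ is a continuous path in $R$. Now take $p, p' \in R$ (resp.\ $R^*$) in the same sign stratum. By the previous step there is a path $\sigma:[0,1]\to \tilde R_n$ from $p$ to $p'$; let $C$ be the supremum of the spectral radii of the coordinates of $\sigma(s)$ over $s \in [0,1]$ and $i\in\{1,2,3\}$, and pick $t \in (0,1)$ with $tC<1$. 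Then $s \mapsto t\cdot\sigma(s)$ is a path in $R^*\subset R$ (each coordinate has spectral radius $\le tC<1$), and concatenating the scaling path $u\mapsto u\cdot p$ for $u\in[t,1]$ with $t\cdot\sigma$ and with the reverse scaling for $p'$ yields a path in $R$ from $p$ to $p'$. The same construction stays inside $R^*$ when both endpoints lie there.

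The main obstacle is that the spectral-radius constraint carving $R$ and $R^*$ out of $\tilde R_n$ is not convex, so arbitrary paths in $\tilde R_n$ need not restrict. The scaling trick sidesteps this by contracting spectra uniformly while preserving both the symmetric-positive-definite structure of $S$ and the determinant signs of the $X_i$, thanks to the homogeneity encoded in Lemma \ref{LemCone2}.
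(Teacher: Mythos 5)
Your proof is correct and follows essentially the same route as the paper: the homeomorphism $\tilde R_n\cong \GL(n,\R)^2\times\Omega$ for the unrestricted set, and then the cone/scaling argument of Lemma \ref{LemCone2} combined with compactness of the path image to transfer connectivity to $R$ and $R^*$. Your version is marginally cleaner in using a single uniform scalar $t$ with an explicit spectral-radius bound $C$ (the paper allows three separate scalars), and in making the upper bound via the sign invariant explicit, but these are presentational differences only.
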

\begin{proof}
  We begin with $\tilde R$. It can be identified with $\GL(n,\R)^2\times \Omega$, where $\Omega$ is the set of symmetric and positive definite matrices in $\GL(n,\R)$. Indeed, the map from  $\tilde R$ to $\GL(n,\R)^2\times \Omega$
  \[
    (X_1,X_2,X_3)\mapsto (X_1,X_2, X_3(X_2^\top)^{-1}X_1)
  \]
  is a homoeomorphism between these spaces. Since $\GL(n,\R)$ has two connected components distinguished by the signs of the determinants of $X_1$ and $X_2$, $\tilde R$ has four connected components ($\Omega$ is connected).   
  Now we proof the proposition for $R$. The main ingredient for the proof is Lemma \ref{LemCone2}.\\  
In any connected component of $\tilde R$ there is at least one connected component of $R$.  Indeed, let  $(X_1,X_2,X_3)\in \tilde R$ arbitrary, then there exists $\lambda_1$, $\lambda_2$ and $\lambda_3$ in $(0,1]$ such that 
$(\lambda_1 X_1,\lambda_2 X_2, \lambda_3 X_3)\in  R$. Hence $|\pi_0(R)|\geq |\pi_0(\tilde R)|$.
  \\
Now we show equality. Let $(X_1,X_2,X_3)$ and $(Y_1,Y_2,Y_3)$ be triples in $R$, such that there is a path $s=(s_1,s_2,s_3)$ joining them in $\tilde R$. By Lemma \ref{LemCone2} there exists for any $t$ a triple $(\lambda_1(t),\lambda_2(t),\lambda_3(t))\in (0,1]^3$ such that $(\lambda_1(t)s_1(t),\lambda_2(t)s_2(t),\lambda_3(t)s_3(t))\in R$ for all $t$. Since the image of $s$ is compact,  there exists $\lambda_1$, $\lambda_2$ and $\lambda_3$ such that the path $\tilde s(t):=(\lambda_1 s_1(t),\lambda_2 s_2(t), \lambda_3 s_3(t))$ is in $R$. It joins $(\lambda_1 X_1,\lambda_2 X_2, \lambda_3 X_3)$ and $(\lambda_1 Y_1,\lambda_2 Y_2,\lambda_3 Y_3)$ in $R$. Furthermore by construction there is a path joining $(X_1,X_2,X_3)$ and $(\lambda_1 X_1,\lambda_2 X_2, \lambda_3 X_3)$ as well as a path joining $(\lambda_1 Y_1,\lambda_2 Y_2,\lambda_3 Y_3)$ and $(X_1,X_2,X_3)$. Hence $(X_1,X_2,X_3)$ and $(Y_1,Y_2,Y_3)$ are in the same connected component of $R$ and $|\pi_0(R)|= |\pi_0(\tilde R)|$.

The proof for $R^*$ goes along the same lines.
\end{proof}

We use the notions from Theorem \ref{ThmGeneral} resp. Figure \ref{GenGluing}.
\begin{proposition}\label{RemComp}   The signs of the determinants of length parameters $X_i$ and $Y_j$ and twist parameters $H_k$ distinguish connected components.
\end{proposition}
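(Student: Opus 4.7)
The plan is to show that the sign of the determinant of each length parameter $X_i$, $Y_j$ and each handle-twist parameter $H_k$ is a well-defined, continuous $\Z/2\Z$-valued invariant on $\Rep_{max}(\Gamma_{g,m},\Sp(2n,\R))$. A locally constant invariant is constant on each connected component, so representations with different sign tuples must lie in different components.

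\textbf{Well-definedness.} By Theorem \ref{MainThmIntro}, the length parameters attached to any pair of pants are determined by the conjugacy class of the restricted representation only up to simultaneous conjugation by an element $k\in\On(n)$; since $\det(kXk^{-1})=\det X$, the signs $\sign\det X_i$ and $\sign\det Y_j$ descend to functions on $\Rep_{max}$. For a handle-closing twist, Proposition \ref{PropG11} shows that $H_k$ is subject only to a single simultaneous $\On(n)$-conjugation together with the length parameters of the same pant, so $\sign\det H_k$ is again well-defined. By contrast, an inter-pant twist $G$ joining two distinct pants is subject to independent $\On(n)$-actions on its left and right factors (see Propositions \ref{PropG04}, \ref{PropG12} and Remark \ref{RemTwist}), so $\sign\det G$ is not invariant under the equivalence relation; this is why only the $H_k$ appear in the statement.

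\textbf{Continuity.} The length and twist parameters are extracted from a representation via the canonical-fixed-point construction of Proposition \ref{PropNormalForm} and the explicit matrix formulas of Theorem \ref{MainThmIntro}, Proposition \ref{PropG11}, and the gluing recipe of Theorem \ref{PropGlueSp2nR}. Each step depends continuously on the representation: canonical fixed points of $S$-hyperbolic isometries are produced as locally uniformly convergent series in Lemma \ref{LemContrExp}, and the reconstruction of the $X_i, Y_j, H_k$ from normalized pants-generators is rational in their matrix entries. Since $\det$ is continuous on $\GL(n,\R)$ and every length or twist parameter is invertible, $\sign\det X_i$, $\sign\det Y_j$ and $\sign\det H_k$ are locally constant functions on $\Rep_{max}(\Gamma_{g,m},\Sp(2n,\R))$. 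Combined with well-definedness, they are constant on each connected component, so any two representations having distinct such sign tuples lie in distinct components.

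\textbf{Main difficulty.} The delicate point is verifying, for the gluing graph of Figure \ref{GenGluing}, that each handle-twist $H_k$ is truly acted on by only one $\On(n)$-factor. This must be traced through the HNN-extension description of handle closure in Section \ref{SecGlueSp2nR}: the two boundary curves being identified lie on a single pant, so the conjugations acting on that pant's length parameters and on $H_k$ are linked by a single element of $\On(n)$, in contrast to the amalgamated-product situation of an interior gluing between two distinct pants. Once this is checked, the proof of the proposition is complete; the accompanying count of $2g+m-1$ independent such sign invariants (coming from the per-pants positivity constraint $\det X_1 \det X_2 \det X_3>0$ and from the identification of adjacent length-signs across interior edges via Theorem \ref{PropGlueSp2nRIntro}(iii)) is what feeds into the lower bound $2^{2g+m-1}$ in Theorem \ref{ThmConnComp}.
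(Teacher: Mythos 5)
Your argument is sound in outline but takes a genuinely different route from the paper. The paper does not argue via continuity of the coordinates at all: it first reduces, by density, to representations whose boundary holonomies are all $S$-hyperbolic, which are Anosov by Theorem \ref{ThmContLC}, and then invokes \cite[Lemma 4.11]{GW09} to identify $\sign\det$ of each length and handle-twist parameter with the value of the first Stiefel--Whitney class $sw_1(\varrho)$ on the corresponding curve (computed from the action of the holonomy on the Lagrangian $\xi(t^s_\gamma)$); local constancy is then inherited from the topological invariance of $sw_1$ for Anosov representations. You instead make the sign invariants locally constant directly, from the homeomorphism of Theorem \ref{MainThmIntro}, the continuous dependence of the canonical fixed points in Lemma \ref{LemContrExp}, and the explicit gluing formulas. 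What your route buys is independence from the Anosov/characteristic-class machinery and from the density reduction -- you get local constancy on all of $\Rep_{max}$ at once, including representations with non-$S$-hyperbolic boundary holonomy. What it costs is that you must know the coordinate maps are continuous for general gluing graphs, whereas the paper proves the homeomorphism statement only for $\Gamma_{0,3}$ and states the general parametrizations merely as bijections; in particular the continuity of the twist-parameter extraction (which involves a continuous local choice, modulo $\On(n)$, of the normalizing elements of the two adjacent pairs of pants) is asserted rather than established, and is the one step you would need to carry out in detail. Your explicit observation that only the handle twists $H_k$ have an $\On(n)\times\On(n)$-invariant determinant sign, while the inter-pants twists $G$ do not (since $G\mapsto lGk^{-1}$ can flip the sign), is correct and explains why only the $H_k$ appear in the statement; the paper leaves this point implicit.
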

\begin{proof}
  By Theorem \ref{ThmContLC} representations $\varrho$ with $\varrho(C_i)$ $S$-hyperbolic for all $i$ are Anosov. Since they are dense in the representation variety $\Rep(\Gamma_{g,m},\Sp(2n,\R)$, it is enough to prove the proposition for this case. For Anosov representations we can apply Lemma 4.11 in \cite{GW09}, which expresses the first Stiefel-Whitney class  of a certain bundle in terms of the representation $\varrho$. We use the notation from \cite{GW09}. For this lemma we need the interpretation of the Shilov boundary as the space of Lagrangian subspaces (c.f. Example \ref{ExShilovLagrangians}). We can assume $\xi(t_\gamma^s) = \langle e_{n+1},\ldots, e_{2n}\rangle$ and
 \[
   \varrho(\gamma)= \left(\begin{array}{cc}
	X&0 \\
	Y&(X^\top)^{-1}
\end{array}\right).
 \]
The matrix $\varrho(\gamma)$ acts on the last $n$ components of the vectors in $\xi(t_\gamma^s)$ by multiplication with $(X^\top)^{-1}$. 
Therefore we get by \cite[Lemma 4.11]{GW09} 
\[
  sw_1(\varrho)([\gamma])=\sign(\det \varrho(\gamma)|_{\xi(t^s_\gamma)})=\sign (\det (X^\top)^{-1})=\sign (\det X)
\]

For the twist parameter, we also have to consider elements of $\Sp(2n,\R)$ of the form
  \[
  \varrho(\bar\gamma)= \left(\begin{array}{cc}
	Y_3 GY_1^{-1}-(G^\top)^{-1}& Y_3G\\
	GY_1^{-1} & -G
\end{array}\right).
  \]
  We want to calculate $sw_1(\varrho)(\bar \gamma)$ as above. Hence we investigate the differential of $\varrho(\bar \gamma)$ in a fixed point. We know that $\varrho(\bar \gamma)$ is $S$-hyperbolic and that it has a pair of transversal fixed points $Y$ and $Y'$ with 
  \[
    \beta(Y_1,Y,0)=\beta(e,Y',\infty)=n.
  \]
  In particular $Y-Y_1$ is positive definite.  By construction $Y_1$ is negative definite, hence the sign of its determinant only depends on $n$. To obtain the derivative of $d$ in $Y$ we calculate
  \begin{align*}
   &  \left(\begin{array}{cc}
	1&-Y \\
	0&1
\end{array}\right) \left(\begin{array}{cc}
	Y_3 GY_1^{-1}-(G^\top)^{-1}&  Y_3G\\
	GY_1^{-1} & -G
\end{array}\right) \left(\begin{array}{cc}
	1&Y \\
	0&1
\end{array}\right)\\
=& \left(\begin{array}{cc}
	(Y_3 GY_1^{-1}-(G^\top)^{-1})-YGY_1^{-1} &0  \\
	GY_1^{-1} & GY_1^{-1} Y-G
\end{array}\right)
  \end{align*}
  Hence the same argument as above applies.
  
  It is enough to consider the length parameters $X_i$ and $Y_j$ and twist parameters $H_k$ because they already determine the Stiefel-Whitney classes. Indeed,  $sw_1(\varrho)(A_i)$, $ sw_1(\varrho)(B_i)$ and $ sw_1(\varrho)(C_j)$ are uniquely determined by them.
\end{proof}

Define: 
 \[
  X_\pm:=
  \left(\begin{array}{cccc}
	\pm \frac{1}{2}&&& \\
	&\frac{1}{2} &&\\
	&&\ddots &\\
	&&&\frac{1}{2}
\end{array}\right)\quad \text{and} \quad G_\pm:=
\left(\begin{array}{cccc}
	\pm 1&&& \\
	&1 &&\\
	&&\ddots &\\
	&&&1
\end{array}\right).
\]

\begin{proposition}\label{PropLowerBound}
  $\Rep_{max}(\Gamma_{g,m},\Sp(2n,\R))$ has at least $2^{2g+m-1}$ connected components if $m\geq 1$.
\end{proposition}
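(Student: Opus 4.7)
The plan is to exhibit, for every sign pattern $\epsilon=(\epsilon_1,\dots,\epsilon_{2g+m-1})\in \{\pm 1\}^{2g+m-1}$, an explicit maximal representation $\varrho_\epsilon\in \Rep_{max}(\Gamma_{g,m},\Sp(2n,\R))$, and then invoke Proposition \ref{RemComp} to conclude that distinct sign patterns yield representations in distinct connected components. The construction uses the general Fenchel-Nielsen coordinates from Theorem \ref{ThmGeneral}, assigning the building blocks $X_\pm$ and $G_\pm$ to the length and twist parameters indexed by the graph in Figure \ref{GenGluing}.

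First I would identify, in the standard gluing graph, a collection of $2g+m-1$ parameters whose signs are mutually independent and which together determine the Stiefel--Whitney invariants used in Proposition \ref{RemComp}: the length parameter and the twist parameter of each of the $g$ handles contribute $2g$ signs, and the remaining $m-1$ signs come from the length parameters attached to $m-1$ of the boundary circles (the last one being either constrained or free, in either case not reducing the count). For each handle I take the length/twist pair of the form $(X_{\epsilon},G_{\epsilon'})$; for each boundary slot I take a length parameter $X_{\epsilon''}$. All remaining length parameters are set equal to $X_{+}$ and all remaining twist parameters equal to $G_{+}=I$. A direct check (using the fact that $X_\pm$ and $G_\pm$ commute and that $X_+^\top (X_+^\top)^{-1} X_+ = X_+$ is positive definite) shows that every triple entering the construction satisfies the compatibility condition $(X_1,X_2,X_3)\in R$ from Theorem \ref{MainThmIntro}; in particular $X_3(X_2^\top)^{-1}X_1$ is a diagonal matrix with positive entries. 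By Theorem \ref{ThmGeneral}, these labelings therefore define genuine maximal representations $\varrho_\epsilon$.

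Next I would use Proposition \ref{RemComp}: the signs of the determinants of the chosen length parameters $X_i,Y_j$ and twist parameters $H_k$ are locally constant on $\Rep_{max}(\Gamma_{g,m},\Sp(2n,\R))$ and by construction are prescribed by $\epsilon$ (indeed $\det X_\pm=\pm (1/2)^n$ and $\det G_\pm = \pm 1$). Hence no two $\varrho_\epsilon,\varrho_{\epsilon'}$ with $\epsilon\ne\epsilon'$ can lie in the same connected component, producing the desired $2^{2g+m-1}$ components.

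The main obstacle is bookkeeping: one must argue carefully that the $2g+m-1$ signs appearing in the standard graph are genuinely independent and that the implicit identifications in Figure \ref{GenGluing} (reflecting the defining relation of $\Gamma_{g,m}$) do not impose a constraint among them. The quickest way I see to handle this is to note that $\Gamma_{g,m}$ with $m\geq 1$ is free of rank $2g+m-1$, so $\Hom(\Gamma_{g,m},\Sp(2n,\R))\cong \Sp(2n,\R)^{2g+m-1}$, and the sign invariants of Proposition \ref{RemComp} factor through independent $\mathbb Z/2$-valued functions on these free generators; thus the sign vector realizes a surjection $\pi_0(\Rep_{max}(\Gamma_{g,m},\Sp(2n,\R)))\twoheadrightarrow (\mathbb Z/2)^{2g+m-1}$, which is the lower bound claimed.
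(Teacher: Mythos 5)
Your overall strategy is the one the paper itself uses: realize each of the $2^{2g+m-1}$ sign vectors by an explicit labelling of the standard gluing graph with the matrices $X_\pm$, $G_\pm$, and then separate the resulting representations by Proposition \ref{RemComp}. However, the concrete assignment you propose does not produce admissible parameters. The condition $(X_1,X_2,X_3)\in R$ forces $X_3(X_2^\top)^{-1}X_1$ to be positive definite, hence $\sgn\det X_1\cdot\sgn\det X_2\cdot\sgn\det X_3=+1$ in \emph{every} pair of pants of the decomposition (this parity relation is exactly why $R$ has four, not eight, connected components in Proposition \ref{PropCountR}). If you prescribe arbitrary signs on $m-1$ boundary circles and on the handles and then set \emph{all remaining} length parameters equal to $X_+$, this product condition is violated in any pair of pants whose prescribed signs multiply to $-1$: for instance with $g=0$, $m=4$, signs $(-,+,+)$ on $C_1,C_2,C_3$ and $X_+$ on the interior curve $\delta$, the first pair of pants carries the triple $(X_-,X_+,X_+)\notin R$, so your ``direct check'' fails there. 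The correct bookkeeping --- which is what the paper does --- is that the interior length parameters and the last boundary parameter are \emph{not} free: each is determined within $\{X_+,X_-\}$ by propagating the parity constraint through the chain of pairs of pants, and one verifies that this propagation is always consistent, so every sign vector $\epsilon$ is still realized.

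Your fallback argument via freeness of $\Gamma_{g,m}$ does not repair this. Knowing that $\Hom(\Gamma_{g,m},\Sp(2n,\R))\cong\Sp(2n,\R)^{2g+m-1}$ and that the invariants of Proposition \ref{RemComp} are ``independent functions of the generators'' gives no surjectivity statement on $\pi_0(\Rep_{max}(\Gamma_{g,m},\Sp(2n,\R)))$: the maximal locus is a proper subset cut out by the Toledo condition, and one must still exhibit a \emph{maximal} representation attaining each prescribed sign vector. That existence statement is precisely the content of the explicit construction, so it cannot be replaced by the formal remark about free groups.
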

\begin{proof}
We can explicitly write down parameters for  $2^{2g+m-1}$ representations which are by Proposition \ref{RemComp} in different connected components. The length parameters of all of these representations are either $X_+$ or $X_-$. The twist parameter are either $G_+$ or $G_-$. 

We have a complete freedom of choice for the twist parameters $H_k$ between $G_+$ or $G_-$ as well as for all length parameter $X_i$ and $Y_j$ betweeen $X_+$ or $X_-$, except $X_1$. This leaves only one choice for all other length parameter if we want them to be either $X_+$ or $X_-$. By Proposition \ref{RemComp} they all lie in different connected compontens.
  \end{proof}

The representations associated with these parameters are twisted diagonal representations as defined in \cite{GW09}.
\begin{proposition}\label{PropUpperBound}
  Let $\varrho:\Gamma_{g,m}\rightarrow \Sp(2n,\R)$ be a maximal representation and we label its parameters as in Figure \ref{GenGluing}.  It can be deformed into a representation $\bar \varrho$ with length parameters 
  \[
     \bar L_i:= X_{\sign \det (L_i)} 
   \]
  where $L_i\in \{W_i,X_i,Y_i,Z_i\}$  is a length parameter from Figure \ref{GenGluing} and 
     \[ \bar T_j:=G_{\sign \det (T_j)},
  \]
   where $T_j\in \{G_j,H_j,J_j\}$ is a twist parameter and $\sign \det \in \{+,-\}$

  \end{proposition}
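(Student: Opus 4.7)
The plan is to construct an explicit continuous path within $\Rep_{max}(\Gamma_{g,m}, \Sp(2n,\R))$ from $\varrho$ to $\bar\varrho$ by deforming the Fenchel-Nielsen parameters of Theorem \ref{ThmGeneral}. The main ingredients are: Lemma \ref{LemTwoComp} (that $B$ splits into two connected components detected by $\sign \det$), the analogous statement for $\GL(n,\R)$, and Proposition \ref{PropCountR} (that $R$ has four components distinguished by the triple of $\sign \det$ entries). I also use that the target matrices $X_\pm$ and $G_\pm$ are all diagonal, hence pairwise commute.

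First I would check that the target parameters actually assemble into a maximal representation. For any pants triple $(L_1,L_2,L_3)\in R$, positive-definiteness of $L_3(L_2^\top)^{-1}L_1$ forces $\epsilon_1\epsilon_2\epsilon_3=+1$ where $\epsilon_i:=\sign\det L_i$. A direct computation then shows $X_{\epsilon_3}(X_{\epsilon_2}^\top)^{-1}X_{\epsilon_1}$ is diagonal and positive-definite, so $(X_{\epsilon_1},X_{\epsilon_2},X_{\epsilon_3})\in R$. The handle and gluing constraints, which take the form $\bar L=TL^\top T^{-1}$, reduce in the standardized setting to $X_{\epsilon'}=G_\delta X_\epsilon G_\delta^{-1}$; these force $\epsilon=\epsilon'$ (by determinants) and hold automatically because diagonal matrices commute.

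Second, I would carry out the deformation in two stages. In stage one, for each pants I deform its length triple $(L_1,L_2,L_3)$ to $(X_{\epsilon_1},X_{\epsilon_2},X_{\epsilon_3})$ along a path inside the appropriate component of $R$, which exists by Proposition \ref{PropCountR}. I do this simultaneously at every pants, keeping the twist parameters fixed; a length parameter that is derived from another via a twist, i.e.\ of the form $T_jL_i^\top T_j^{-1}$, changes accordingly but remains in the same $\sign\det$-component, so every pants triple stays inside $R$ throughout. In stage two, with the lengths now equal to $X_\pm$, I deform each twist $T_j$ to $G_{\sign\det T_j}$ along a path in the appropriate $\sign\det$-component of $\GL(n,\R)$. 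Because $G_\pm$ commutes with $X_\pm$, the conjugate $T_j(t)X_\epsilon T_j(t)^{-1}$ returns to $X_\epsilon$ at the endpoint, and at intermediate times it merely moves within its $\sign\det$-component, keeping all affected pants triples inside $R$.

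The main obstacle is maintaining every constraint simultaneously: moving a single parameter forces the derived parameters elsewhere in the gluing graph to move as well, and one must show these induced motions never leave the component of $R$ at any pants. This is resolved by the observation that all constraints are encoded by $\sign\det$-type invariants that are locally constant on the set of invertible matrices, together with Proposition \ref{PropCountR}, which identifies the connected components of $R$ exactly by these invariants. If additional flexibility is needed in keeping an intermediate triple inside $R$, Lemma \ref{LemCone2} (rescaling by positive factors) can be applied to reposition the triple without altering any sign invariant.
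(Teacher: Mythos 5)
Your reduction of everything to ``$\sign\det$-type invariants'' is where the argument breaks. Proposition \ref{PropCountR} says that the \emph{connected components} of $R$ are distinguished by the signs of the determinants; it does not say that membership in $R$ is detected by those signs. The set $R$ is cut out of $\bar B^3$ by the condition that $X_3(X_2^\top)^{-1}X_1$ be symmetric and positive definite, a positive-codimension constraint that is destroyed by independently perturbing a single entry of the triple. In your stage one, when you move the length triple of one pair of pants along a path in $R$, the glued boundary parameter of the neighbouring pants is dragged along as $T_jL_i(t)^\top T_j^{-1}$; the triple at the neighbour then consists of one moving matrix and two fixed ones, and there is no reason for its product to remain symmetric --- staying in the correct $\sign\det$-component of $\GL(n,\R)^3$ is not enough. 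The same objection applies in stage two: for intermediate $t$ the matrix $T_j(t)X_\epsilon^\top T_j(t)^{-1}$ is conjugate to $X_\epsilon^\top$ but not equal to it, so the affected triple generically leaves $R$ immediately. Lemma \ref{LemCone2} cannot repair this, since rescaling by positive scalars preserves the symmetry defect rather than removing it.

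The repair is to never move all three entries of a pants triple independently: designate one length parameter per pants as dependent and \emph{solve} for it, $X_1(t)=\bigl(X_3(t)(X_2(t)^\top)^{-1}\bigr)^{-1}S(t)$ with $S(t)$ a freely chosen path of symmetric positive definite matrices, rescaled by a positive function of $t$ to keep it contracting; this is exactly Lemma \ref{LemDeformation03}, and Lemma \ref{LemDeformation11} is the analogue when a twist is involved. The paper then propagates this through the gluing graph by induction on the number of building blocks: deform the sub-representation on $\Gamma_{g,m}$ first, record the induced path on the glued boundary, and feed it in as one of the two prescribed paths when deforming the newly attached pair of pants (or $\Sigma_{1,2}$). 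Your opening paragraph --- that the target parameters do assemble into a maximal representation, via $\epsilon_1\epsilon_2\epsilon_3=+1$ and commutativity of the diagonal targets --- is correct and consistent with what the paper uses, but the deformation itself needs the solve-for-one-parameter mechanism rather than a component count.
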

Together with Proposition \ref{PropLowerBound} we get:
\begin{theorem}\label{CorConComp}
    $\Rep_{max}(\Gamma_{g,m},\Sp(2n,\R))$ has $2^{2g+m-1}$ connected components if $m\geq 1$.
\end{theorem}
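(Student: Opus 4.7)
The plan is to derive Theorem \ref{CorConComp} as an immediate consequence of the two statements it is sandwiched between: Proposition \ref{PropLowerBound} supplies the lower bound by exhibiting $2^{2g+m-1}$ concrete twisted-diagonal representations, one for each sign pattern on the free length and twist parameters, and Proposition \ref{RemComp} (via the Stiefel--Whitney class computation of \cite[Lemma 4.11]{GW09}) guarantees that these lie in pairwise distinct components. Proposition \ref{PropUpperBound} then asserts that an arbitrary maximal representation can be deformed to one of these models, giving the matching upper bound. So the final write-up is literally a single line: combine the two propositions.

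The actual content is the deformation statement, Proposition \ref{PropUpperBound}, which I would prove as follows. First I would invoke Theorem \ref{ThmGeneral} to put $\varrho$ in Fenchel--Nielsen coordinates with length parameters $L_i \in \bar B$ and twist parameters $T_j \in \GL(n,\R)$ arranged on the standard graph of Definition \ref{DefStandardGraph}. Each twist $T_j$ is unconstrained beyond lying in $\GL(n,\R)$, so by Lemma \ref{LemTwoComp} it is path-connected within its component to $G_{\sign \det T_j}$; these deformations may be performed independently. For the length parameters one works locally on each pair-of-pants block, where the triple $(X_1,X_2,X_3)$ must lie in $R$. By Proposition \ref{PropCountR}, $R$ has four components distinguished by the signs of the three determinants, and the proof of that proposition already shows each component is path-connected to the corresponding standard triple $(X_{\sign \det X_1},X_{\sign \det X_2},X_{\sign \det X_3})$ via the scaling trick that preserves symmetry and positive definiteness of $X_3(X_2^\top)^{-1}X_1$.

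The main obstacle, and the reason one must work harder than the purely local picture suggests, is that the per-block deformations cannot be chosen independently: an internal length parameter is shared between two neighbouring blocks, and the positive-definiteness constraint must be maintained in both simultaneously. I would handle this by ordering the blocks compatibly with the standard graph and deforming the length parameters inductively along this ordering, at each stage using the scaling argument to keep the product $X_3(X_2^\top)^{-1}X_1$ positive definite, and only afterwards deforming the twists. The resulting path in the Fenchel--Nielsen parameter space lifts, via the parametrization of Theorem \ref{ThmGeneral}, to a continuous path of maximal representations, which is exactly the statement of Proposition \ref{PropUpperBound}. Combining with Proposition \ref{PropLowerBound} then yields $\#\pi_0(\Rep_{max}(\Gamma_{g,m},\Sp(2n,\R))) = 2^{2g+m-1}$.
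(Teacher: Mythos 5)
Your proposal is correct and follows the paper's own route: the theorem is obtained exactly by combining Proposition \ref{PropLowerBound} with Proposition \ref{PropUpperBound}, and your inductive deformation of the length parameters block by block along the gluing graph, using the scaling trick to preserve positive definiteness of $X_3(X_2^\top)^{-1}X_1$, is precisely the paper's recurrence built on Lemmas \ref{LemDeformation03} and \ref{LemDeformation11}. The only caveat is that the twist parameters are not entirely decoupled from the length constraints (e.g.\ in Proposition \ref{PropG11} the condition is $(X_1,X_2,GX_1^\top G^{-1})\in R$), but your scaling/induction step absorbs this exactly as the paper does.
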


Before we can prove Proposition \ref{PropUpperBound} we need two lemmas, which are special cases. 

\begin{lemma}\label{LemDeformation03}
  Let $\varrho:\Gamma_{0,3}\rightarrow \Sp(2n,\R)$ be a maximal representation with three contracting  parameters $X_1$, $X_2$ and $X_3$.  Let $X_3(t):[0,1]\rightarrow B$  and $X_2(t):[0,1]\rightarrow B$ be two path starting in $X_1$ resp. $X_2$. Then there exists a path $X_1(t):[0,1]\rightarrow B$ such that $X_3(t)\big( X_2(t)^\top\big)^{-1}X_1(t)$ is symmetric and positive definite for any $t$, i.e. the path $(X_1(t),X_2(t),X_3(t))$ gives parameters for maximal representations of $\Gamma_{0,3}$ into $\Sp(2n,\R)$ for any $t$, such that all $X_i(t)$ are contracting. Hence it defines a path in $\Rep_{max}(\Gamma_{0,3},\Sp(2n,\R))$.
\end{lemma}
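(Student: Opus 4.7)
The plan is to use the formula $X_1(t) = X_2(t)^\top X_3(t)^{-1} S(t)$, where $S(t)$ is a continuous path in the cone $\Omega$ of symmetric positive definite matrices starting at $S_0 := X_3(X_2^\top)^{-1} X_1 \in \Omega$. For any such choice, a direct calculation gives $X_3(t)(X_2(t)^\top)^{-1} X_1(t) = S(t)$, which is automatically symmetric and positive definite; so the real work is to arrange additionally that $X_1(t) \in B$.

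I would try the simplest ansatz $S(t) = \mu(t)\, S_0$ for a continuous positive scalar function $\mu \colon [0,1] \to (0,\infty)$ with $\mu(0) = 1$, so that $X_1(t) = \mu(t)\, Y(t)$ where
\[
  Y(t) := X_2(t)^\top X_3(t)^{-1} S_0
\]
is a continuous path in $\GL(n,\R)$ with $Y(0) = X_2^\top X_3^{-1} X_3 (X_2^\top)^{-1} X_1 = X_1$. Let $r(t)$ denote the spectral radius of $Y(t)$. Since the spectral radius is a continuous function on matrices, $r$ is continuous on the compact interval $[0,1]$, and $r(0) < 1$ because $X_1 \in B$. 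Fix any $\epsilon \in (0, 1 - r(0))$ and set
\[
  \mu(t) := \min\!\Bigl(1,\ \tfrac{1-\epsilon}{r(t)}\Bigr),
\]
with the convention that the min equals $1$ wherever $r(t) = 0$. Then $\mu$ is continuous and positive, $\mu(0) = 1$ since $r(0) < 1 - \epsilon$, and the spectral radius of $X_1(t) = \mu(t)\, Y(t)$ satisfies $\mu(t)\, r(t) \leq 1 - \epsilon < 1$ for every $t$. Hence $X_1(t) \in B$ throughout $[0,1]$, and by Theorem~\ref{MainThmIntro} the triple $(X_1(t), X_2(t), X_3(t)) \in R^*$ yields a continuous path of maximal representations in $\Rep_{max}(\Gamma_{0,3}, \Sp(2n,\R))$.

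The main obstacle is to reconcile the two requirements on $X_1(t)$: staying inside the open contracting region $B$, while preserving the symmetry and positive-definiteness of $X_3(t)(X_2(t)^\top)^{-1}X_1(t)$. The observation that decouples them is that $\Omega$ is a convex cone, so multiplication by a positive scalar preserves positive-definiteness; this reduces the whole lemma to a one-parameter rescaling, essentially Lemma~\ref{LemCone2} applied pointwise in $t$.
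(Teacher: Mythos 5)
Your proof is correct and follows essentially the same route as the paper: define the candidate path by solving $X_3(t)(X_2(t)^\top)^{-1}X_1(t)=S$ for $X_1(t)$, then rescale by a positive continuous scalar to force the spectral radius below $1$, using that positive scaling preserves positive definiteness of the product. Your explicit choice $\mu(t)=\min\bigl(1,(1-\epsilon)/r(t)\bigr)$ just makes the paper's scaling function $\lambda(t)$ concrete.
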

\begin{proof}
  Since $X_1$, $X_2$ and $X_3$ are parameters for a representation, we know that $S:=X_3(X_2^\top)^{-1}X_1$ is symmetric and positive definite. Defining $\tilde X_1(t):=(X_3(t)(X_2(t)^\top)^{-1})^{-1}S$, we get a path with $\tilde X_1(0)=X_1$ such that $\tilde X_1(t)$, $X_2(t)$ and $X_3(t)$ are parameters for a maximal representation for any $t\in [0,1]$. To fix the issue that $\tilde X_1(t)$ might be non-contracting for some $t$, we choose a curve $\lambda:[0,1]\rightarrow R_{>0}$ such that $\lambda(0)=1$ and $1/\lambda(t)$ is bigger than the absolute value of the biggest eigenvalue of $\tilde X_1(t)$ for all $t$. Now putting $X_1(t):=\lambda(t) \tilde X_1(t)$ finishes the proof.
\end{proof}

\begin{corollary}
  Every maximal representation of $\Gamma_{0,3}$ into $\Sp(2n,\R)$ with parameters $(X_1,X_2,X_3)$ can be deformed into the representation with parameters $(X_{\sign \det X_1},X_{\sign \det X_2},X_{\sign \det X_3})$.
\end{corollary}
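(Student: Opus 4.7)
The plan is to deduce this from Proposition \ref{PropCountR} together with the homeomorphism of Theorem \ref{MainThmIntro}. Write $\epsilon_i := \sign \det X_i$. The strategy is to show that the triples $(X_1,X_2,X_3)$ and $(X_{\epsilon_1},X_{\epsilon_2},X_{\epsilon_3})$ lie in the same connected component of $R^*$; any path between them inside $R^*$ then descends through $R^*/\On(n) \hookrightarrow R/\On(n)$ to yield, via Theorem \ref{MainThmIntro}, a continuous deformation in $\Rep_{max}(\Gamma_{0,3},\Sp(2n,\R))$.

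First I would verify that the target triple actually belongs to $R^*$. Each $X_\pm$ is diagonal and contracting, so only the symmetry and positive-definiteness of $X_{\epsilon_3}(X_{\epsilon_2}^\top)^{-1}X_{\epsilon_1}$ needs checking. A direct computation shows that this product is diagonal, with first entry $\epsilon_1\epsilon_2\epsilon_3/2$ and all other diagonal entries equal to $1/2$. Positive definiteness therefore requires $\epsilon_1\epsilon_2\epsilon_3=+1$, but this is automatic: since $(X_1,X_2,X_3)\in R^*$, the matrix $X_3(X_2^\top)^{-1}X_1$ is symmetric positive definite, so its determinant is positive, giving exactly $\epsilon_1\epsilon_2\epsilon_3=+1$.

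Next, by Proposition \ref{PropCountR} the set $R^*$ has four connected components, distinguished by the triple $(\sign\det X_1,\sign\det X_2,\sign\det X_3)$. By construction both $(X_1,X_2,X_3)$ and $(X_{\epsilon_1},X_{\epsilon_2},X_{\epsilon_3})$ carry the same sign pattern $(\epsilon_1,\epsilon_2,\epsilon_3)$, so they lie in the same component. Picking any path between them in $R^*\subset R$ and applying $\bar f$ followed by the projection to conjugacy classes produces the required continuous deformation of maximal representations.

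There is no substantial obstacle here; the only nontrivial verification is the explicit evaluation of $X_{\epsilon_3}(X_{\epsilon_2}^\top)^{-1}X_{\epsilon_1}$ in the first step, which is immediate from the diagonal form of each $X_\pm$. A more constructive alternative would be to invoke Lemma \ref{LemDeformation03} to first deform $X_2$ and $X_3$ inside $B$ to $X_{\epsilon_2}$ and $X_{\epsilon_3}$ (which is possible because, by Lemma \ref{LemTwoComp}, the components of $B$ are exactly determined by $\sign\det$), obtaining a triple $(X_1',X_{\epsilon_2},X_{\epsilon_3})\in R^*$; then, fixing $X_2$ and $X_3$ and setting $M:=X_{\epsilon_3}(X_{\epsilon_2}^\top)^{-1}$, deform $X_1'$ to $X_{\epsilon_1}$ within the connected set $\{M^{-1}S : S\in\Omega\}\cap B$, rescaling as in the proof of Lemma \ref{LemDeformation03} to remain contracting. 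The proposition-level argument above, however, is cleaner and needs no further bookkeeping.
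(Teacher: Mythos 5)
Your argument is correct, and it takes a slightly different route from the one the paper intends. The corollary is placed immediately after Lemma \ref{LemDeformation03} and is meant to follow from it: choose paths $X_2(t)$, $X_3(t)$ in $B$ from $X_2$, $X_3$ to $X_{\sign\det X_2}$, $X_{\sign\det X_3}$ (possible by Lemma \ref{LemTwoComp}), let the lemma supply a compatible contracting path $X_1(t)$ (whose endpoint still has $\sign\det=\sign\det X_1$, since the path stays in $B$), and then, with the second and third parameters frozen, move $S$ inside the convex cone $\Omega$ to reach $X_{\sign\det X_1}$, rescaling to stay contracting --- which is exactly your ``constructive alternative.'' Your primary route instead invokes Proposition \ref{PropCountR} wholesale: the target triple lies in $R^*$ because positive definiteness of $X_3(X_2^\top)^{-1}X_1$ forces $\epsilon_1\epsilon_2\epsilon_3=+1$ (your diagonal computation is right), both triples carry the same determinant-sign pattern, hence lie in one component, and the homeomorphism of Theorem \ref{MainThmIntro} transports a connecting path to $\Rep_{max}(\Gamma_{0,3},\Sp(2n,\R))$. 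This is cleaner, and it has the added virtue of covering parameters that are merely in $\bar B$ rather than $B$ --- a case Lemma \ref{LemDeformation03} does not address --- \emph{provided} you replace $R^*$ by $R$ throughout, which Proposition \ref{PropCountR} equally permits. As written, your appeal to ``since $(X_1,X_2,X_3)\in R^*$'' silently assumes all three parameters are contracting, which the corollary's hypothesis does not guarantee; this is a one-word fix, not a gap.
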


\begin{lemma}\label{LemDeformation11}
  Let $\varrho:\Gamma_{1,1}\rightarrow \Sp(2n,\R)$ be a maximal representation with parameters $(X_1,X_2,G)$ as in Proposition \ref{PropG11}. Then $\varrho$ can be deformed into a maximal representation $\bar \varrho$  with parameters
  \[  
      \bar X_1=X_{\sign \det X_1},~ \bar X_2 =X_{\sign \det X_2} ,~ G=G_{\sign \det G}.
  \]
\end{lemma}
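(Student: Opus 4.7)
The idea is to build a continuous path in $\Rep_{max}(\Gamma_{1,1},\Sp(2n,\R))$ connecting $\varrho$ to $\bar\varrho$ by deforming the triple $(X_1,X_2,G)$ while preserving the constraints of Proposition~\ref{PropG11}.

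A preliminary observation will be that the formula $S=X_1^\top X_2^{-1}(G^\top)^{-1}X_1G^\top$ in Proposition~\ref{PropG11} forces $\det S=(\det X_1)^2(\det X_2)^{-1}>0$, so $\det X_2>0$ automatically and $\bar X_2=X_+$. This means I only need to freely choose the signs for $X_1$ and $G$.

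Since each sign-of-determinant component of $B$ and of $\GL(n,\R)$ is path-connected (Lemma~\ref{LemTwoComp} and its analogue), I will pick continuous paths $X_1(t):[0,1]\to B$ from $X_1$ to $\bar X_1$ and $G(t):[0,1]\to \GL(n,\R)$ from $G$ to $\bar G$, and set $X_3(t):=G(t)X_1(t)^\top G(t)^{-1}\in B$. To specify $X_2(t)$ I use the parametrization of admissible $X_2$ by the convex cone $\Omega$ of symmetric positive definite matrices via $S=X_3(X_2^\top)^{-1}X_1$, whose inverse reads $X_2=X_3^\top S^{-1}X_1^\top$. I then take the straight-line path $S(t)\subset\Omega$ from $S(0)=X_3(0)(X_2^\top)^{-1}X_1(0)$ to $S(1):=\bar X_1 X_+^{-1}\bar X_1=X_+$ (a direct diagonal computation, using that $\bar X_1$ and $\bar G$ are diagonal so $X_3(1)=\bar X_1$), and set $\tilde X_2(t):=X_3(t)^\top S(t)^{-1}X_1(t)^\top$. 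Using the symmetry of $S(0)$ one verifies $\tilde X_2(0)=X_2$, and by construction $\tilde X_2(1)=X_+$.

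The only real obstacle is that $\tilde X_2(t)$ need not lie in $\bar B$ for intermediate $t$. As in the proof of Lemma~\ref{LemDeformation03}, I will cure this by rescaling: set $X_2(t):=\lambda(t)\tilde X_2(t)$ with $\lambda(t):=\min(1,1/\mu(t))$, where $\mu(t)$ denotes the spectral radius of $\tilde X_2(t)$. Continuity of $\mu$, together with $\mu(0),\mu(1)\le 1$, gives $\lambda(0)=\lambda(1)=1$, so the endpoints of the path are unchanged; the rescaled symmetric matrix $\lambda(t)^{-1}S(t)$ still lies in $\Omega$, and $X_2(t)\in\bar B$ throughout. Then $(X_1(t),X_2(t),G(t))$ is a continuous path of valid parameters, giving via Proposition~\ref{PropG11} the required deformation. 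The main technical subtlety is this rescaling step, which mirrors exactly the argument used for the pair of pants in Lemma~\ref{LemDeformation03}.
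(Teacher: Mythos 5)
Your proof is correct and follows essentially the same route as the paper's: fix paths for $X_1$ and $G$ within their determinant components, move $S$ inside the convex cone $\Omega$, solve for the remaining parameter $X_2$, and rescale to keep it in $\bar B$ without disturbing the endpoints. Your write-up is in fact more carefully worked out (explicit endpoint verification and the observation that $\det X_2>0$ is forced by positivity of $S$), but it is not a different argument.
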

\begin{proof}
 Let $X_1(t)$ and $G(t)$ be paths in $\GL(n,\R)$ joining $X_1$ and $X_\pm$ resp. $G$ and $G_\pm$. Choose a path $S(t)$ in the symmetric positive definite matrices joining $GX_2^\top G^{-1}(X_2^\top)^{-1} X_1 $ and  $\frac{1}{2}I$ such that $X_1(t):= (G(t)X(t)_2^\top G(t)^{-1}(X(t)_2^\top){-1} )^{-1}S(t)$ is contracting for all $t$. This is possible since one can scale $S(t)$ as in the proof of Proposition \ref{LemDeformation03} such that the eigenvalues of $X_1(t)$ are small enough.  These paths defines a path in $\Rep_{max}(\Gamma_{1,1},\Sp(2n,\R))$ joining $\varrho$ with the desired representation.
\end{proof}

\begin{proof}[Proof of Proposition \ref{PropUpperBound}]
  We prove Proposition \ref{PropUpperBound} by recurrence. By Lemma \ref{LemDeformation03} and Lemma \ref{LemDeformation11} it is true for representations of $\Gamma_{0,3}$ and $\Gamma_{1,1}$. So we assume now that it is true for $\Gamma_{g,m}$ with $m\geq 1$. 
  \begin{enumerate}
     \item The case $\Gamma_{g,m+1}$\\
     Let $\varrho:\Gamma_{g,m+1}\rightarrow \Sp(2n,\R)$ be a maximal representation. 
     First note that we obtain $\Sigma_{g,m+1}$ by gluing $\Sigma_{g,m}$ and $\Sigma_{0,3}$. Let $(X_1,X_2,X_3)$ be  parameters of the restriction of $\varrho$ to $\pi_1(\Sigma_{0,3})$.
     
     By recurrence assumption we can deform the parameters of $\varrho|_{\Gamma_{g,m}}$ as requested. This produces a path $X(t)$ for the boundary component of $\Sigma_{g,m}$ along which we can glue $\Sigma_{g,m}$ and $\Sigma_{0,3}$. Joining the twist parameter for this boundary component with $G_+$ resp. $G_-$ defines a path from $X_3$ to $X_+$ resp. $X_-$. By Lemma \ref{LemDeformation03} this  path, together with a path $X_2(t)$ joining $X_2$ and $X_+$ resp. $X_-$ produces a path which joins $\varrho$ with the desired representation.    
    \item The case $\Gamma_{g+1,m}$\\
    This case works analogously by gluing a surface $\Sigma_{1,2}$ to $\Sigma_{g,m}$.
  \end{enumerate}
\end{proof}

\bibliography{PaperFenchelNielsen}

\def\cprime{$'$}
\begin{thebibliography}{10}

\bibitem{Abikoff}
W.~Abikoff.
\newblock {\em The real analytic theory of {T}eichm\"uller space}, volume 820
  of {\em Lecture Notes in Mathematics}.
\newblock Springer, Berlin, 1980.

\bibitem{Bertram}
W.~Bertram.
\newblock {\em The geometry of {J}ordan and {L}ie structures}, volume 1754 of
  {\em Lecture Notes in Mathematics}.
\newblock Springer-Verlag, Berlin, 2000.

\bibitem{BorelJordanDecomp}
A.~Borel.
\newblock Class functions, conjugacy classes and commutators in semisimple
  {L}ie groups.
\newblock In {\em Algebraic groups and {L}ie groups}, volume~9 of {\em Austral.
  Math. Soc. Lect. Ser.}, pages 1--19. Cambridge Univ. Press, Cambridge, 1997.

\bibitem{DefMaxRep}
S.~B. {Bradlow}, O.~{Garcia-Prada}, and P.~B. {Gothen}.
\newblock {Deformations of maximal representations in Sp(4,R)}.
\newblock {\em ArXiv e-prints}, mar 2009.

\bibitem{Anosov}
M.~Burger, A.~Iozzi, F.~Labourie, and A.~Wienhard.
\newblock Maximal representations of surface groups: symplectic {A}nosov
  structures.
\newblock {\em Pure Appl. Math. Q.}, 1(3, part 2):543--590, 2005.

\bibitem{Herm}
M.~Burger, A.~Iozzi, and A.~Wienhard.
\newblock Hermitian symmetric spaces and {K}\"ahler rigidity.
\newblock {\em Transform. Groups}, 12(1):5--32, 2007.

\bibitem{LimitCurves}
M.~Burger, A.~Iozzi, and A.~Wienhard.
\newblock Maximal representations and {A}nosov structures.
\newblock {\em Preprint}, 2009.

\bibitem{BIWSl2}
M.~Burger, A.~Iozzi, and A.~Wienhard.
\newblock Higher {T}eichm\"uller spaces: from {SL}(2,{R}) to other {L}ie
  groups.
\newblock {\em to appear in the Handbook of Teichm\"uller Theory}, apr 2010.

\bibitem{Surface}
M.~Burger, A.~Iozzi, and A.~Wienhard.
\newblock Surface group representations with maximal {T}oledo invariant.
\newblock {\em Ann. of Math. (2)}, 172(1):517--566, 2010.

\bibitem{ClercMaslov2}
J.-L. Clerc.
\newblock An invariant for triples in the {S}hilov boundary of a bounded
  symmetric domain.
\newblock {\em Comm. Anal. Geom.}, 15(1):147--173, 2007.

\bibitem{Clerc}
J.-L. Clerc.
\newblock Geometry of the {S}hilov boundary of a bounded symmetric domain.
\newblock {\em J. Geom. Symmetry Phys.}, 13:25--74, 2009.

\bibitem{CK07}
J.-L. Clerc and K.~Koufany.
\newblock Primitive du cocycle de {M}aslov g\'en\'eralis\'e.
\newblock {\em Math. Ann.}, 337(1):91--138, 2007.

\bibitem{CO2}
J.~L. Clerc and B.~{\O}rsted.
\newblock The {M}aslov index revisited.
\newblock {\em Transform. Groups}, 6(4):303--320, 2001.

\bibitem{CO}
J.~L. Clerc and B.~{\O}rsted.
\newblock The {G}romov norm of the {K}aehler class and the {M}aslov index.
\newblock {\em Asian J. Math.}, 7(2):269--295, 2003.

\bibitem{FK}
J.~Faraut and A.~Kor{\'a}nyi.
\newblock {\em Analysis on symmetric cones}.
\newblock Oxford Mathematical Monographs. The Clarendon Press Oxford University
  Press, New York, 1994.
\newblock Oxford Science Publications.

\bibitem{FG06}
V.~Fock and A.~Goncharov.
\newblock Moduli spaces of local systems and higher {T}eichmuller theory.
\newblock {\em Publ. Math. Inst. Hautes \'Etudes Sci.}, (103):1--211, 2006.

\bibitem{RepSymplHiggs}
O.~{Garcia-Prada}, P.~B. {Gothen}, and I.~M.~i {Riera}.
\newblock {Higgs bundles and surface group representations in the real
  symplectic group}.
\newblock {\em ArXiv e-prints}, sep 2008.

\bibitem{GoldmanDiss}
W.~M. Goldman.
\newblock {\em {Discontinuous groups and the Euler class.}}
\newblock PhD thesis, University of California at Berkeley, 1980.

\bibitem{GoldmanConvex}
William~M. Goldman.
\newblock Convex real projective structures on compact surfaces.
\newblock {\em J. Differential Geom.}, 31(3):791--845, 1990.

\bibitem{GothenComp}
P.~B. Gothen.
\newblock Components of spaces of representations and stable triples.
\newblock {\em Topology}, 40(4):823--850, 2001.

\bibitem{GW09}
O.~Guichard and A.~Wienhard.
\newblock Topological invariants of {A}nosov representations.
\newblock {\em J. Topol.}, 3(3):578--642, 2010.

\bibitem{GWNew}
O.~Guichard and A.~Wienhard.
\newblock Anosov representations: {D}omains of discontinuity and applications.
\newblock {\em ArXiv e-prints}, 2011.

\bibitem{HS09}
T.~Hartnick and T.~Strubel.
\newblock Cross ratios, translation length and maximal representations.
\newblock {\em Geometriae Dedicata}, 2012.

\bibitem{Helgason}
S.~Helgason.
\newblock {\em Differential geometry, {L}ie groups, and symmetric spaces},
  volume~34 of {\em Graduate Studies in Mathematics}.
\newblock American Mathematical Society, Providence, RI, 2001.
\newblock Corrected reprint of the 1978 original.

\bibitem{Matrix}
R.~A. Horn and C.~R. Johnson.
\newblock {\em Topics in matrix analysis}.
\newblock Cambridge University Press, Cambridge, 1994.
\newblock Corrected reprint of the 1991 original.

\bibitem{Jost}
J.~Jost.
\newblock {\em Compact {R}iemann surfaces}.
\newblock Universitext. Springer-Verlag, Berlin, third edition, 2006.
\newblock An introduction to contemporary mathematics.

\bibitem{Kostant}
B.~Kostant.
\newblock On convexity, the {W}eyl group and the {I}wasawa decomposition.
\newblock {\em Ann. Sci. \'Ecole Norm. Sup. (4)}, 6:413--455 (1974), 1973.

\bibitem{LMcS09}
F.~Labourie and G.~McShane.
\newblock Cross ratios and identities for higher {T}eichm\"uller-{T}hurston
  theory.
\newblock {\em Duke Math. J.}, 149(2):279--345, 2009.

\bibitem{S11}
T.~Strubel.
\newblock {\em Fenchel-{N}ielsen {C}oordinates for {M}aximal
  {R}epresentations}.
\newblock PhD thesis, ETH Z\"urich, 2011.

\bibitem{AnnaMCG}
A.~Wienhard.
\newblock The action of the mapping class group on maximal representations.
\newblock {\em Geom. Dedicata}, 120:179--191, 2006.

\bibitem{Wolf}
J.~A. Wolf.
\newblock Fine structure of {H}ermitian symmetric spaces.
\newblock In {\em Symmetric spaces ({S}hort {C}ourses, {W}ashington {U}niv.,
  {S}t. {L}ouis, {M}o., 1969--1970)}, pages 271--357. Pure and App. Math., Vol.
  8. Dekker, New York, 1972.

\end{thebibliography}

\end{document}